\newtheorem{theorem}{Theorem}[section]
\newtheorem{corollary}[theorem]{Corollary}
\newtheorem{main}{Main Theorem}
\newtheorem{lemma}[theorem]{Lemma}
\theoremstyle{definition}
\newtheorem{example}[theorem]{Example}
\newtheorem{definition}[theorem]{Definition}
\newtheorem{remark}[theorem]{Remark}
\title {On partial maps derived from flows}
\author{Tomoharu Suda}
\address{RIKEN Center for Sustainable Resource Science, Japan}
\email{tomoharu.suda@riken.jp}
\keywords{Partial map, Poincar\'e map, hybrid system.}
\subjclass{37B02, 34A38, 37C10.}
\begin{document}
\begin{abstract}
The first-return map, or the Poincar\'e map, is a fundamental concept in the theory of flows. However, it can generally be defined only partially, and additional conditions are required to define it globally. Since this partiality reflects the dynamics, the flow can be described by considering the domain and behavior of such maps. In this study, we define the concept of first-out maps and first-in maps, which are partial maps derived from flows, to enable such analysis. Moreover, we generalize some notions related to the first-return maps. It is shown that the boundary points of an open set can be classified based on the behavior of these maps, and that this classification is invariant under topological equivalence. Further, we show that some dynamical properties of a flow can be described in terms of the types of boundary points. In particular, if the flow is planar and the open set has a Jordan curve as its boundary, a more detailed analysis is possible. We present results on the conditions that restrict possible forms of the first-out maps. Finally, as an application of the results obtained, we consider the relationship between flows and a class of hybrid systems.
\end{abstract}
\maketitle 
\section{Introduction}

The first-return map, or the Poincar\'e map, is a fundamental tool in the study of continuous-time dynamical systems. It has a wide range of applications, from the classical proof of the Poincar\'e-Bendixon theorem to the analysis of chaotic dynamical systems \cite{robinson1998dynamical, brin2002introduction}, and is arguably one of the most valuable concepts in the theory of flows.  

In general, the first-return map is not necessarily defined globally, and we need additional assumptions to ensure its global existence. For example, we may construct a global Poincar\'e section to obtain a globally defined first-return map for a broad class of flows \cite{basener2004every}. Even then, there is still no guarantee that the domain of resulted maps is in good shape and suitable for analyzing dynamics. Since the suspension of the first-return map is topologically equivalent to the original flow \cite{irwin2001smooth, yang2000remark, suda2022}, the existence of a domain with good topological properties might restrict the possible motion of the flow. 

While this lack of global existence may be a hindrance to the applications, there is the information we can extract from it. Indeed, if such a map is not defined at a point, this fact itself conveys information on the asymptotic behavior of the orbit starting from there. Therefore it appears worth asking how much we can infer from this partiality of the first-return map.

Another question regarding the first-return map is the relationship with the hybrid systems, which are defined by a combination of discrete and continuous dynamical systems. For example, if we consider the bouncing motion of a ball, we obtain a hybrid system, which combines the continuous motion under gravity and the discrete change of velocity due to collision with the floor. 
It is known that some hybrid systems can be obtained from flows by ``squeezing" the phase space \cite{bernardo2008piecewise}. This construction is at least qualitatively similar to the Poincar\'e half map and the half-return map, which can be regarded as a partial version of the first-return map \cite{uehleke1983complicated, carmona2021integral, novaes2021lyapunov}. This motivates us to consider the problem of what kind of partial maps can be expressed in terms of flows via constructions similar to the first-return maps. 

%

The purpose of the present article is to consider partial maps derived from a flow in a manner similar to the first-return map and apply them to the analysis of flows. Further, we aim to apply the results obtained here to a converse problem of representing a partial map by a flow so that we reduce a class of hybrid systems to flows.

The main results of this study are as follows. First, we introduce the concept of first-out and first-in maps, which are generalizations of the first-return map in the usual sense (Definition \ref{def_exit}). For a flow $\Phi$ on X and an open set $A\subset X$, the first-out map is defined to be the map assigning each point $x \in \partial A$ to the point $\Phi(\tau, x)$, where $\tau = \inf\{t >0 \mid \Phi(t,x) \not\in A\}$ if $\tau$ is finite. Thus, the first-out map moves a point on $\partial A$ to the first intersection point of the boundary and its forward orbit other than itself. The first-out map is defined to be a partial map and we do not require it to be defined globally on $\partial A$. This is a direct generalization of the Poincar\'e half map. Similarly, the first-in map is defined for a closed set in terms of the first-return point. 

Here we note that the concept of first-out and first-in maps can be defined for general continuous flows on a topological space as it does not depend on the differentiable structure. This is an advantage over the classical notion of a first-return map based on the cross-section. For example, we may consider flows with non-differentiable points. Further, by explicitly allowing maps to be partial, it is not necessary to verify the existence of a returning orbit to apply the results.

We can classify boundary points of a regular open set, which has the property that its boundary coincides with that of the closure, into five types according to whether these maps are defined and whether they are fixed under these maps. For a regular open set $A$, let $E_A$ be the first-out map and $R_A$ be the first-in map. Then, each $x \in \partial A$ falls into one of the following types:
\begin{enumerate}
	\item (Type A-1, launching points) $E_A(x) = x$ and $R_{\bar A} (x) \neq x.$
	\item (Type A-2, diving points) $E_A(x) \neq x$ and $R_{\bar A} (x) = x.$
	\item (Type A-3, tangency points) $E_A(x) = x$ and $R_{\bar A} (x) = x.$
	\item (Type B, never-to-return points) $R_A(x)$ is undefined.
	\item (Type C, never-to-leave points) $E_A(x)$ is undefined.
\end{enumerate}
 Figure \ref{fig_type} shows a rough sketch of orbits for each type. 
 
 These types are invariant under topological equivalence and, therefore, can be used to describe the dynamics. More concretely, we show the following result.
\begin{main}
Let $(X, \Phi)$ and $(Y, \Psi)$ be flows topologically equivalent via a homeomorphism $h: X \to Y$, and $A \subset X$ and $B \subset Y$ be regular open sets with $B = h(A)$. Then, $x \in \partial A$ and $h(x) \in \partial B$ have the same type.
\end{main}
Further, we show that various dynamical properties of a flow can be described in terms of these types of boundary points. For example, the invariance of an open set can be expressed by specifying the types of boundary points, which may be regarded as a generalization of the result that a set is invariant if a vector field points inward on the boundary.

For planar flows and open sets with Jordan curves as boundaries, we can obtain a more convenient representation of the first-out maps in terms of parametrization. If $c:[0,1) \to \mathbb{R}^2$ is a parametrization of the boundary of an open set $A \subset \mathbb{R}^2$, then it induces a partial map $F_E: [0,1) \to [0,1)$ by $c\qty(F_E(s)) = E_A(c(s)),$ where $E_A$ is the first-out map of $A$. The map $F_E$ encodes information on $E_A$ in a form more suitable to analyze.

Since the first-out map is defined by a flow, its possible forms are restricted. In particular, we have the following monotonicity result for planar flows, which asserts that a parametric representation of an exit map should decrease locally around a point where it is not identity.

\begin{main}
If a parametric representation of a first-out map $F_E$ is continuous at $s$ and $F_E(s)\neq s$, there exists $\delta>0$ such that $F_E(t) > F_E(s)$ whenever $s-\delta < t <s$, and $F_E(t) < F_E(s)$ whenever $s<t<s+\delta.$
\end{main}

Thus, the first-out map induces a well-behaved map on $[0,1)$. A kind of converse to this result holds, and we can represent a map on $\mathbb{R}$ by a planar flow if it is sufficiently well-behaved. 

\begin{main}
Let $P: \mathbb{R} \to \mathbb{R}$ be a continuous map such that $P(-\infty, 0] = [0, \infty)$, $P(0) =0$, and $P$ is two-to-one except at $0$ and identity on $[0, \infty)$. Then, $\qty(P(x), 0) = E_{H^{-}}^\Phi(x,0)$ for some flow $\Phi$, where $E_{H^{-}}^\Phi$ is the first-out map for $H^{-} :=\{(x,y) \mid y <0\}.$
\end{main}
Using this result, we can represent a class of hybrid systems in terms of flows. Here we consider impacting systems, which are simple hybrid systems that consist of a flow and a resetting map. While an exact definition of an impact system is given in Definition \ref{def_imp}, let us introduce an example of such a system to illustrate it.
\begin{example}
The motion of a bouncing ball is formulated as an impacting system. Namely, its state is described as a point in the closure of the upper half plane $\overline{H^+} = \mathbb{R} \times [0, \infty),$ and its dynamics is given by
\[
	\dv{t} \mqty(x \\ y) = \mqty(y \\ -g)
\]
for $x >0$, where $g >0$ is the acceleration of gravity, and
\[
	y(t+0) = -r y(t-0)
\]
when $x(t) = 0$, where $r>0$ is the coefficient of restitution. Thus the dynamics are described by a flow on the upper half plane and a map on the y-axis.
Its trajectories can be defined as a kind of curve on $\overline{H^+} $ with discontinuities.
\end{example}
While an impacting system is determined by a flow and a map, we can simplify the map part of the system into that of the impact oscillator, namely, a $-1$ times map, if it is sufficiently well-behaved. Thus, there is a normal form for the map part.

\begin{main}
Let $(P, \Phi, \Phi_s)$ be an impacting system induced by local flows. If $P$ is defined on the whole $\mathbb{R}$, continuous, and not identity, then $(P, \Phi, \Phi_s)$ is topologically conjugate with another impacting system $(Q, \Psi, \Psi_s)$, where $Q(x) = -x$ if $x \leq 0.$
\end{main}  

This article is organized as follows. In Section 2, we introduce basic terms and some preliminary results. In Section 3, we define the notion of first-out maps and first-in maps, study their basic properties, introduce the concept of types of boundary points, and apply them to the description of dynamical properties. In Section 4, we consider the first-out maps of planar flows and their parametric representation. In Section 5, we apply the results obtained to the study of a class of hybrid systems.
\section{Preliminaries}
In this section, we describe some basic definitions and results used throughout this article.

First, we introduce the main objects of our consideration here, i.e., flows and partial maps.
\begin{definition}[Flow]
Let $X$ be a topological space. A continuous map $\Phi:\mathbb{R}\times X \to X$ is a \emph{flow} if
\begin{enumerate}
	\item For each $x\in X,$ $\Phi(0,x) = x.$
	\item For each $s,t\in \mathbb{R}$ and $x \in X,$ $\Phi\qty(s+t,x) = \Phi\qty(s, \Phi\qty(t, x)).$
\end{enumerate}
A flow $\Phi$ on $X$ is denoted by $(X, \Phi).$

For each $x \in X$, the \emph{forward orbit} of $x$ is the set $\mathcal{O}^{+}(x):=\{\Phi\qty(t,x) \mid t \geq 0\}.$ The \emph{backward orbit} of $x$ is defined similarly by
$\mathcal{O}^{-}(x):=\{\Phi\qty(t,x) \mid t \leq 0\}.$ The \emph{orbit} of $x$ is the set $\mathcal{O}(x):=\mathcal{O}^{+}(x) \cup \mathcal{O}^{-}(x).$
\end{definition}
Details on partial maps can be found in \cite{abd1980compact}.
\begin{definition}[Partial map]
Let $X$ and $Y$ be topological spaces.
A \emph{partial map} is a pair $(D, f)$ of subset $D \subset X$ and a map $f: D \to Y.$ For a partial map $(D,f)$, the set $D$ is called the \emph{domain}, and is denoted by $\mathrm{dom}\, f$. As a convention, we denote a partial map $(D, f)$ by $f: X \to Y.$ The \emph{image} of a partial map is the set $\mathrm{im}\, f := f \left(\mathrm{dom}\,f \right).$

A partial map $f:X \to Y$ is a \emph{partial map with open domain} if $\mathrm{dom}\,f$ is open and $f: \mathrm{dom}\,f \to Y$ is continuous.
\end{definition}

We now introduce some preliminary results.
The following lemma is a generalization of the intermediate value theorem.
\begin{lemma}\label{lem_IVT}
Let X be a topological space, $A \subset X$ an open subset, and $c: [\alpha, \beta] \to X$ be continuous. If $c(\alpha) \in A$ and $c(\beta) \not \in A$, then there exists $\gamma \in [\alpha, \beta]$ such that $c(\gamma) \in \partial A.$
\end{lemma}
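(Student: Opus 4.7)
The plan is to locate $\gamma$ as the first time the path $c$ leaves the open set $A$, and then verify that the image at that time belongs to the boundary. Concretely, I would define
\[
\gamma := \inf\{\,t \in [\alpha,\beta] \mid c(t) \notin A\,\}.
\]
This set is nonempty because $c(\beta)\notin A$, so $\gamma$ is a well-defined element of $[\alpha,\beta]$.

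Next I would establish two positional facts that together force $c(\gamma)\in\partial A$. First, since $c(\alpha)\in A$ and $A$ is open, continuity of $c$ at $\alpha$ yields some $\eta>0$ with $c([\alpha,\alpha+\eta))\subset A$; this shows $\gamma\geq \alpha+\eta>\alpha$, so for every $t\in[\alpha,\gamma)$ we have $c(t)\in A$. Taking a sequence $t_n\uparrow\gamma$ and applying continuity gives $c(\gamma)=\lim c(t_n)\in\overline{A}$.

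It then remains to rule out $c(\gamma)\in A$. Suppose for contradiction it did; since $A$ is open, continuity of $c$ at $\gamma$ would give $\delta>0$ with $c\bigl((\gamma-\delta,\gamma+\delta)\cap[\alpha,\beta]\bigr)\subset A$. If $\gamma<\beta$, this contradicts the definition of $\gamma$ as an infimum (it would force the infimum to be $\geq \gamma+\delta$); if $\gamma=\beta$, it contradicts $c(\beta)\notin A$. Hence $c(\gamma)\notin A$, and combined with $c(\gamma)\in\overline{A}$ and the openness of $A$, we conclude $c(\gamma)\in\overline{A}\setminus A=\partial A$.

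The argument is essentially the standard least-upper-bound style proof of the intermediate value theorem, so I do not expect any substantive obstacle; the only mild subtlety is being careful about which endpoint case ($\gamma=\beta$) needs separate handling when contradicting the infimum property. No separation axiom on $X$ is needed, since only the openness of $A$ and continuity of $c$ are used.
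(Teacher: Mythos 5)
Your proof is correct, but it takes a genuinely different route from the paper. The paper's argument is purely topological: it writes $[\alpha,\beta] = c^{-1}(\bar A) \cup c^{-1}(X\setminus A)$ as a union of two nonempty closed sets and invokes connectedness of the interval to force an intersection, which lands in $c^{-1}(\partial A)$. You instead exploit the order structure of $[\alpha,\beta]$ and define $\gamma$ as the first exit time $\inf\{t \mid c(t)\notin A\}$, then verify $c(\gamma)\in\bar A$ by approaching from the left and $c(\gamma)\notin A$ by the infimum property. Both arguments are sound and equally elementary; the paper's is shorter and more symmetric in $A$ versus $X\setminus A$, while yours buys something the paper's does not: it produces the \emph{minimal} such $\gamma$, which is exactly the quantity $T^e_A(x)$ that the paper later needs when proving well-definedness of the first-out map (there the paper has to rerun a version of your argument on top of Lemma \ref{lem_IVT}). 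One small stylistic caveat: the phrase ``$c(\gamma)=\lim c(t_n)$'' presumes uniqueness of sequential limits, which can fail in a non-Hausdorff space; the correct (and intended) statement is simply that every neighborhood of $c(\gamma)$ contains some $c(t_n)\in A$ by continuity, hence $c(\gamma)\in\bar A$. With that rephrasing your claim that no separation axiom is needed stands.
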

\begin{proof}
Since $X = \bar A \cup (X\backslash A)$, we have $[\alpha, \beta] = c^{-1}\left( \bar A\right) \cup c^{-1}\left(X\backslash A\right).$ By the connectedness of $[\alpha, \beta]$, we have 
\[
	 c^{-1}\left( \bar A\right) \cap c^{-1}\left(X\backslash A\right)=  c^{-1}\left( \bar A \cap (X\backslash A) \right)\neq \emptyset.
\]
Since we have $\bar A \cap (X\backslash A) = \partial A$, there exists $\gamma \in [\alpha, \beta]$ such that $c(\gamma) \in \partial A.$
\end{proof}

The next lemma is trivial but useful in constructing a flow with prescribed properties.
\begin{lemma}\label{lem_mkflow}

Let $X$ and $Y$ be topological spaces and $\Phi: \mathbb{R}\times X \to X$ a continuous flow. If $h:X \to Y$ is a homeomorphism, there exists a unique flow $\Psi:  \mathbb{R}\times Y \to Y$ such that
\[
	\Psi(t, h(x)) = h\left(\Phi(t,x)\right)
\] 
for all $x \in X$ and $t \in \mathbb{R}.$
\end{lemma}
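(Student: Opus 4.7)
The plan is to construct $\Psi$ explicitly as the pullback of $\Phi$ by $h$, namely
\[
    \Psi(t,y) := h\bigl(\Phi(t, h^{-1}(y))\bigr)
\]
for $y \in Y$ and $t \in \mathbb{R}$. This is the only possible definition: given any candidate $\Psi$ satisfying the required intertwining relation, we can set $y = h(x)$, equivalently $x = h^{-1}(y)$, to recover the formula above; this gives the uniqueness claim immediately, since $h$ being a homeomorphism means $h^{-1}$ is defined everywhere on $Y$.

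Next I would verify that $\Psi$ defined this way is actually a flow. Continuity follows from the fact that $\Psi$ is the composition of the continuous maps $\mathrm{id}_\mathbb{R} \times h^{-1}$, $\Phi$, and $h$, all of which are continuous because $h$ is a homeomorphism and $\Phi$ is a flow. For the identity axiom, compute
\[
    \Psi(0,y) = h\bigl(\Phi(0, h^{-1}(y))\bigr) = h\bigl(h^{-1}(y)\bigr) = y.
\]
For the group law, observe that
\[
    \Psi(s, \Psi(t,y)) = h\Bigl(\Phi\bigl(s,\, h^{-1}(h(\Phi(t, h^{-1}(y))))\bigr)\Bigr) = h\bigl(\Phi(s, \Phi(t, h^{-1}(y)))\bigr),
\]
which equals $h\bigl(\Phi(s+t, h^{-1}(y))\bigr) = \Psi(s+t,y)$ by the group law for $\Phi$.

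Finally, the intertwining identity holds by direct substitution: $\Psi(t, h(x)) = h(\Phi(t, h^{-1}(h(x)))) = h(\Phi(t,x))$. There is no real obstacle here, since the assertion is essentially definitional once one writes down the conjugated flow; the only care needed is to use $h^{-1}$ where appropriate in order to remove the ambiguity in the defining equation on points not of the form $h(x)$, and this is precisely where surjectivity of $h$ is required.
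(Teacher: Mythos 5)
Your proof is correct and is the standard conjugation argument; the paper itself declares this lemma ``trivial'' and gives no proof, so your write-out of $\Psi(t,y) = h(\Phi(t,h^{-1}(y)))$ with the verification of continuity, the flow axioms, and uniqueness via surjectivity of $h$ is exactly the intended (omitted) argument.
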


\begin{example}\label{ex_moe}
Lemma \ref{lem_mkflow} enables us to apply the idea of conformal transformation, which is often used in hydrodynamics, to general flows.
Let $\Phi: \mathbb{R}\times \mathbb{R}^2 \to \mathbb{R}^2$ be a continuous flow. 
Then, it can be shown that $\Phi$ can be extended to a continuous flow $\hat{ \Phi}$ on the Riemann sphere $\hat{\mathbb{C}}$ by setting $\hat{ \Phi}(t,\infty) = \infty$ for all $t \in \mathbb{R}.$ If $M_A: \hat{\mathbb{C}} \to \hat{\mathbb{C}}$ is the M\" obius transformation defined by the matrix $A$, there exists another flow $\hat{ \Psi}$ on the Riemann sphere such that $\hat{ \Psi}(t, M_A(z)) = M_A(\hat{ \Phi}(t,z))$ by Lemma \ref{lem_mkflow}. If $\infty$ is an equilibrium point of $\hat{\Psi}$, $\hat{\Psi}$ is an extension of a continuous flow on $\mathbb{R}^2.$ By finding a suitable M\"obius transformation, we may map the interior of the unit disc to the lower half-plane, for example.
\end{example}

\section{First-out maps and first-in maps}
In this section, we first introduce the notion of first-out maps and first-in maps and study their basic properties. By considering the domain and behavior of these maps, we define the types of boundary points of an open set that can be used to describe the dynamics of the flow from which the map was derived.
\subsection{Definition and basic properties}
First, we define first-out maps and first-in maps as follows.
\begin{definition}[First-out maps and first-in maps]\label{def_exit}
Let $(X, \Phi)$ be a flow.
\begin{enumerate}
	\item For an open set $A \subset X$, the \emph{first-out map} $E^\Phi_A: \partial A \to \partial A$ is a partial map defined by
	\[
		E^\Phi_A(x) := \Phi\qty(T^e_A(x), x),
	\]
	where
	\[
		T^e_A(x) := \inf\{t >0 \mid \Phi(t,x) \not\in A\}.
 	\]
	\item For a closed set $B \subset X$, the \emph{first-in map} $R^\Phi_B: \partial B \to \partial B$ is a partial map defined by
	\[
		R^\Phi_B(x) := \Phi\qty(T^r_B(x), x),
	\]
	where
	\[
		T^r_B(x):= \inf\{t >0 \mid \Phi(t,x) \in B\}.
 	\]
\end{enumerate}
For notational convenience, we drop the index for the flow and denote it as $E_A$ if there is no confusion.
\end{definition}
We need to check that the first-out map is well-defined.
\begin{lemma}
First-out maps are well-defined. That is,  $\mathrm{im}\,E_A \subset \partial A.$ 
\end{lemma}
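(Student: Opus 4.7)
The plan is to verify, for $x \in \mathrm{dom}\, E_A$ and $\tau := T^e_A(x)$, that the point $y := \Phi(\tau, x)$ lies in $\bar A \cap \overline{X\setminus A}$; since $A$ is open, this intersection coincides with $\partial A$ and the desired conclusion follows.

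For the $\overline{X\setminus A}$ direction, I would extract from the definition of $\tau$ as an infimum a sequence $t_n \searrow \tau$ with $\Phi(t_n, x) \notin A$, so that continuity of $\Phi$ in the time variable forces $y \in \overline{X\setminus A}$. For the $\bar A$ direction, when $\tau > 0$ the infimum property ensures $\Phi(t, x) \in A$ for every $t \in (0, \tau)$, and letting $t \nearrow \tau$ together with continuity gives $y \in \bar A$.

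The only mildly subtle point is the degenerate case $\tau = 0$, in which the interval $(0, \tau)$ is empty and the preceding approach to the $\bar A$-inclusion fails. There I would simply invoke the hypothesis that the domain of $E_A$ sits inside $\partial A$: then $y = \Phi(0, x) = x \in \partial A \subset \bar A$, closing the gap. Apart from this case analysis, the argument is a direct unpacking of the infimum definition combined with joint continuity of the flow; I do not anticipate any technical obstacle, and in particular no appeal to Lemma \ref{lem_IVT} appears to be needed.
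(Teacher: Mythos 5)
Your argument is correct, and it does part company with the paper at the final step. Both proofs split into the cases $\tau=0$ and $\tau>0$ and both rest on the same two observations: that $\Phi(t,x)\in A$ for all $t\in(0,\tau)$ by the infimum property, and that a sequence $t_n\to\tau$ with $\Phi(t_n,x)\notin A$ forces $\Phi(\tau,x)\in X\setminus A$ by closedness of the complement. Where you differ is in how membership in $\partial A$ is concluded: you verify directly that $y=\Phi(\tau,x)$ lies in $\bar A\cap\overline{X\setminus A}=\partial A$, whereas the paper first establishes $\Phi(\tau,x)\notin A$ and then invokes Lemma \ref{lem_IVT} on an interval $[\alpha,\tau]$ to produce some $\gamma$ with $\Phi(\gamma,x)\in\partial A$, after which a minimality argument identifies $\gamma$ with $\tau$. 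Your route is the more economical one: it needs no intermediate value argument and no extra step pinning down $\gamma$, only the elementary identity $\partial A=\bar A\cap\overline{X\setminus A}$ (which, incidentally, holds for arbitrary $A$, so your parenthetical ``since $A$ is open'' is not actually needed there). The paper's detour buys nothing for this particular lemma; Lemma \ref{lem_IVT} is genuinely needed elsewhere (e.g.\ in Theorem \ref{thm_bkwd} and Lemma \ref{lem_mono}), but here your direct verification is a clean simplification. Your handling of the degenerate case $\tau=0$ via $x\in\mathrm{dom}\,E_A\subset\partial A$ matches the paper's.
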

\begin{proof}
Let $x \in \partial A,$ where $A$ is an open subset, and $t_0 := T^e_A(x)$. If $t_0= 0,$ then $E_A(x) = x \in \partial A$. 

If $t_0 > 0$, we have $\Phi(t,x) \in A$ for all $t \in (0, t_0)$, and there exists a sequence $t_n >0$ with $t_n \to t_0$ as $n \to \infty$ and $\Phi(t_n, x) \not \in A$. Since $A$ is open, we observe that $\Phi(t_0, x) \not \in A.$ By Lemma \ref{lem_IVT}, for all $\alpha \in (0, t_0),$ there exists $\gamma \in [\alpha, t_0]$, such that $\Phi(\gamma, x) \in \partial A$. By the definition of $t_0$, we have $\gamma \leq t_0$. Therefore we conclude that $\gamma = t_0$, and consequently, $E_A(x) \in \partial A$.
 \end{proof}
 
 \begin{remark}\label{rem_map}
Consequently, if $A$ is open, we have
\begin{enumerate}
	\item $E_A(x) \not \in A$ if $x \in \mathrm{dom}\, E_A,$
	\item $R_{\bar A}(x)  \in \bar A$ if $x \in \mathrm{dom}\, R_{\bar A}.$
\end{enumerate}
 \end{remark}
 \begin{remark}
 Note that $x \in \mathrm{dom}\, E_A$ if and only if $ \Phi(t,x) \not\in A$ for some $t>0.$ Equivalently, $x \not \in  \mathrm{dom}\, E_A$ if and only if $\mathcal{O}^+(x)\backslash \{x\} \subset A.$
 \end{remark}
 \begin{remark}
  In the definition of first-out or first-in maps, we do not require open sets or closed sets to be connected because this property is not necessary for defining them. 
 \end{remark}
The first-out map is a dual concept to the first-in map. This is observed by the next lemma, which follows immediately from the definition. Therefore, we will mainly consider the first-out map in what follows.
\begin{lemma}\label{lem_sym}
Let $(X, \Phi)$ be a flow, $A \subset X$ an open set, and $B \subset X$ a closed set. Then, the following hold identically.
\[
	\begin{aligned}
		E_A & = R_{X \backslash A}, \\
		R_B & = E_{X \backslash B}.
	\end{aligned}
\]
\end{lemma}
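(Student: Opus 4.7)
My plan is to prove both identities by directly unfolding the definitions in Definition \ref{def_exit}. The key conceptual observation is that for any subset $A \subset X$, one has the topological identity $\partial A = \partial(X \setminus A)$, so that the codomains/natural ambient sets for $E_A$ and $R_{X \setminus A}$ (and similarly for the second identity) coincide. Thus, it suffices to check that the domains and the values of the two partial maps on those domains match.

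For the first identity, I would start by fixing $A$ open, so that $X \setminus A$ is closed and $R_{X \setminus A}$ is well-defined. Then I would compare the time functions: by definition,
\[
T^e_A(x) = \inf\{t > 0 \mid \Phi(t,x) \notin A\} = \inf\{t > 0 \mid \Phi(t,x) \in X \setminus A\} = T^r_{X \setminus A}(x),
\]
since the conditions $\Phi(t,x) \notin A$ and $\Phi(t,x) \in X \setminus A$ are literally the same. This gives the equality of the underlying time functions, and in particular $\mathrm{dom}\, E_A = \mathrm{dom}\, R_{X \setminus A}$ (both being the set of $x \in \partial A = \partial(X \setminus A)$ for which the infimum is finite, \emph{i.e.}, attained by some $t > 0$). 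Applying $\Phi(\cdot,x)$ to this common time then yields $E_A(x) = R_{X \setminus A}(x)$ for all $x$ in the common domain.

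The second identity $R_B = E_{X \setminus B}$ follows by the same argument, now with $B$ closed so that $X \setminus B$ is open, and with the roles of the two time functions swapped. Indeed, $T^r_B(x) = \inf\{t > 0 \mid \Phi(t,x) \in B\} = \inf\{t > 0 \mid \Phi(t,x) \notin X \setminus B\} = T^e_{X \setminus B}(x)$, and again $\partial B = \partial(X \setminus B)$, so the conclusion is immediate.

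I do not anticipate any real obstacle: both identities are essentially a matter of rewriting set-membership conditions via complementation, so the proof is a short formal verification rather than an argument requiring any nontrivial idea.
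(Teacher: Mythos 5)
Your proof is correct and is exactly the intended argument: the paper gives no written proof, stating only that the lemma ``follows immediately from the definition,'' and your unfolding of $T^e_A = T^r_{X\setminus A}$ together with $\partial A = \partial(X\setminus A)$ is precisely that immediate verification. (One cosmetic quibble: the infimum being finite is equivalent to the set $\{t>0 \mid \Phi(t,x)\notin A\}$ being nonempty, not to the infimum being \emph{attained}; this does not affect the argument.)
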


 Now we present an example of the first-out maps and first-in maps.
 
\begin{example}\label{ex_exmap}
Let us consider the flow on $\mathbb{R}^2$ generated by the vector field
\begin{equation}\label{ex_exit}
	v(x,y) := (x,-y).
\end{equation}
For the unit disc $D = \{(x,y)\mid x^2+y^2 <1\}$, the first-out map and the first-in map are given by
\[
	E_D(\theta) = \begin{cases}
					\theta & (0 \leq \theta < \frac{\pi}{4})\\
					\frac{\pi}{2}-\theta & ( \frac{\pi}{4} \leq \theta < \frac{\pi}{2})\\
					\text{undefined } & (  \theta = \frac{\pi}{2})\\
					\frac{3}{2} \pi -\theta & ( \frac{\pi}{2} < \theta \leq \frac{3}{4} \pi)\\
					\theta & (  \frac{3}{4} \pi < \theta \leq  \frac{5}{4} \pi)\\
					\frac{5}{2}\pi-\theta & ( \frac{5}{4} \pi <  \theta < \frac{3}{2}\pi)\\
					\text{undefined }& ( \theta = \frac{3}{2}\pi)\\
					\frac{7}{2}\pi-\theta & ( \frac{3}{2}\pi <  \theta \leq  \frac{7}{4} \pi)\\
					\theta & ( \frac{7}{4} \pi <  \theta < 2\pi)
					\end{cases}
\]

and 

\[
	R_{\bar D}(\theta) = \begin{cases}
					\text{undefined } & (0 \leq \theta \leq \frac{\pi}{4})\\
					\theta & ( \frac{\pi}{4} < \theta < \frac{3}{4} \pi)\\
					\text{undefined } & (   \frac{3}{4} \pi < \theta < \frac{5}{4} \pi)\\
					\theta & (\frac{5}{4} \pi < \theta < \frac{7}{4} \pi )\\
					\text{undefined } & (  \frac{7}{4} \pi \leq \theta <2 \pi)
					\end{cases}
\]
respectively, where $S^1 = \partial D$ is parametrized by the angle $\theta$. The plots of these partial maps are shown in Figure \ref{fig_ex}.

\begin{figure}[htbp]
	  \begin{minipage}[b]{0.45\linewidth}
    \centering
    \includegraphics[keepaspectratio, scale=0.25]{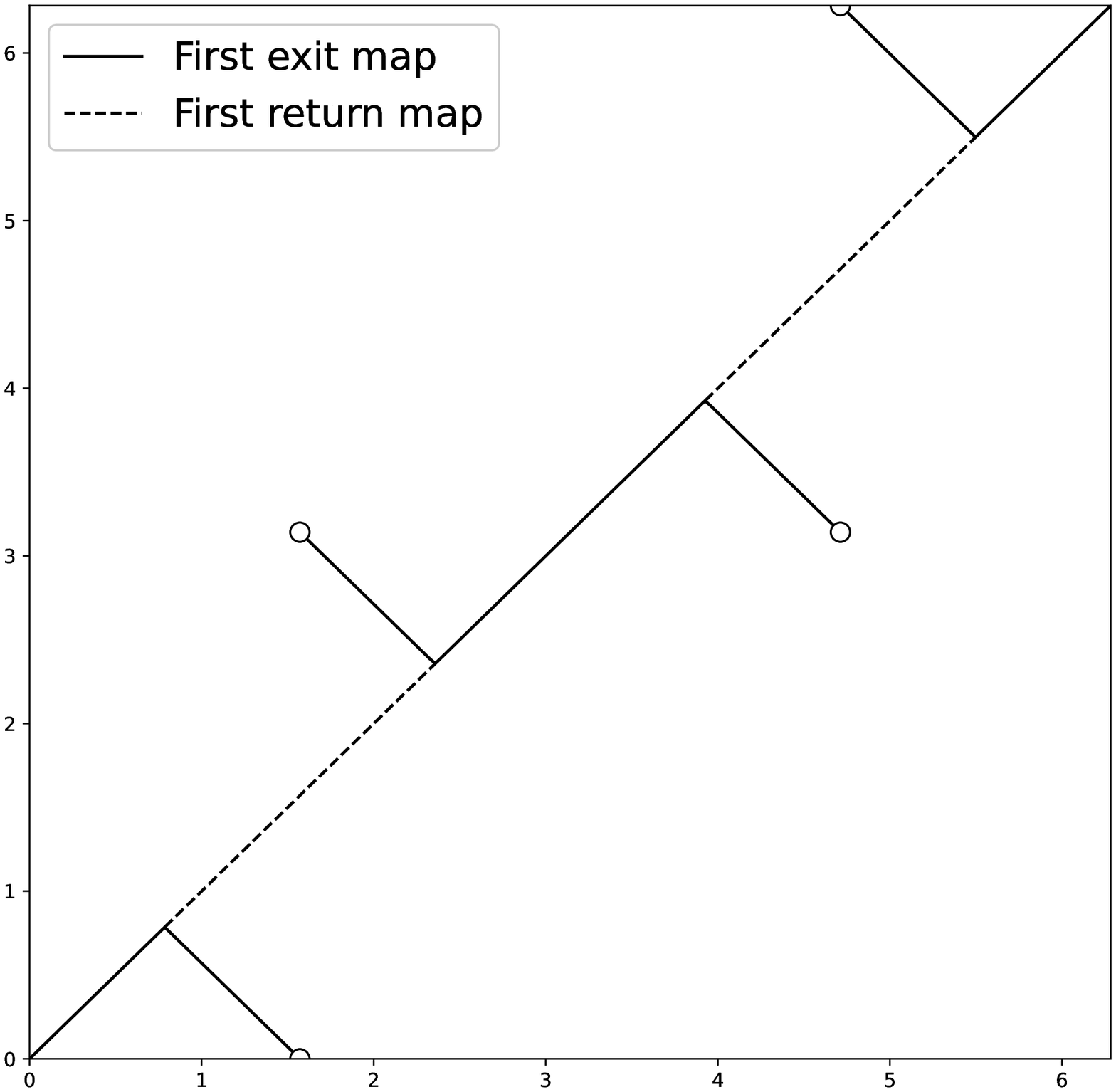}
  \end{minipage}
  \begin{minipage}[b]{0.45\linewidth}
    \centering
    \includegraphics[keepaspectratio, scale=0.25]{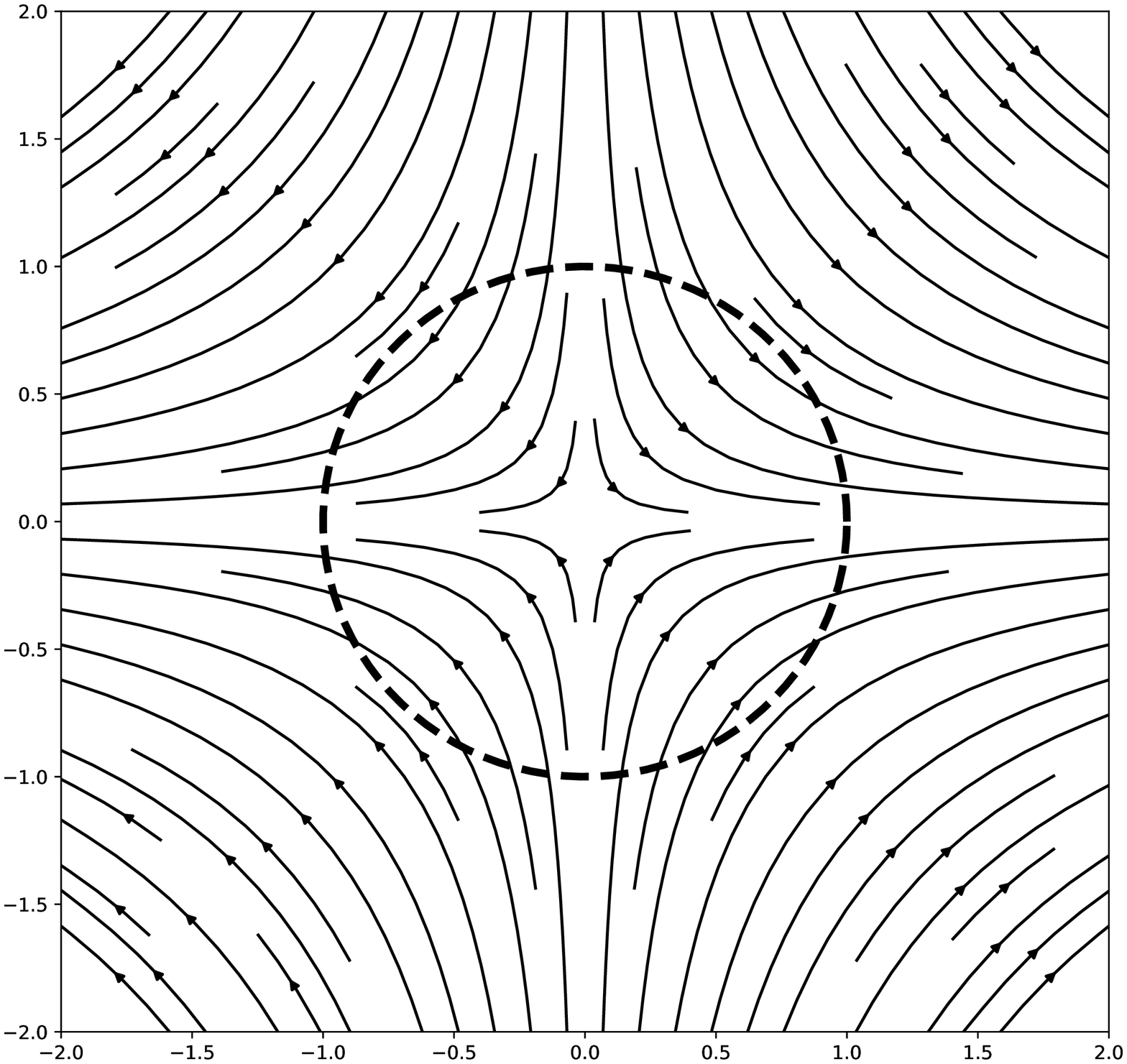}
  \end{minipage}
  \caption{Left: The first-out map and the first-in map of the flow defined by (\ref{ex_exit}). Right: The flow defined by (\ref{ex_exit}).}\label{fig_ex}
\end{figure} 
\end{example}
First-out or first-in maps can be used to describe the transition of states under observation errors, as in the next example.
\begin{example}
 Here we consider the problem of the cooling of an object with a limited supply of heat. If we heat water in a cup by putting a heated stone into it, the temperature of the water will go up and return to room temperature after a sufficiently long time. Let us consider this situation a system of ordinary differential equations for definiteness. Let $T_{s}, T_{w}, T_{r}$ be the temperature of the stone, water, and room, and assume the cooling, or the transfer of heat, is described by Newton's law of cooling, that is,
\[
	\begin{aligned}
		\dv{T_s}{t} & = - \alpha \qty(T_s - T_w)\\
		\dv{T_w}{t} & = \gamma \qty(T_s - T_w) - \beta \qty(T_w - T_r),
	\end{aligned}
\]
where $\alpha, \beta, \gamma, T_r > 0$ are assumed to be constant. The initial condition $T_s(0) = T_H > T_r$ and $T_w(0) = T_r$ is appropriate to describe the situation under consideration. However, we immediately see that $T_w$ will never equal $T_r$, as it never reaches the equilibrium point $T_w = T_s = T_r$ within finite time. Here we must take the error of observation $\epsilon$ into consideration. If we set $A_\epsilon =\{(T_w, T_s) \mid \qty|T_w-T_r|\leq \epsilon\}$, the return to the room temperature can be identified with the return to $A_\epsilon$ after the first exit from it. For example, the total time required to cool down is given in terms of $T^r_{A_\epsilon}$. 
\end{example}

A substantial restriction exists on the possible form of a first-out map since the orbits of a flow are disjoint.
\begin{theorem}\label{thm_two_one}
Let $(X, \Phi)$ be a flow and $A \subset X$ an open set. Then, the first-out map $E_A$ is at most two-to-one.
\end{theorem}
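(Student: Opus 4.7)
The plan is to argue by contradiction: suppose there exist three distinct points $x_1, x_2, x_3 \in \partial A$ with $E_A(x_1) = E_A(x_2) = E_A(x_3) = p$. Writing $t_i := T^e_A(x_i)$, each $x_i$ satisfies $\Phi(t_i, x_i) = p$, equivalently $x_i = \Phi(-t_i, p)$, so all three points lie on a common orbit (the orbit through $p$).

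Next I would observe that the values $t_1, t_2, t_3$ must be pairwise distinct: if, say, $t_i = t_j$, then $x_i = \Phi(-t_i, p) = \Phi(-t_j, p) = x_j$, contradicting distinctness. (This handles periodic orbits uniformly, since $t_i$ is the specific infimum defining $E_A$, not any representative modulo a period.) So after relabeling, we may assume $0 \le t_1 < t_2 < t_3$.

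The key computation is then
\[
    \Phi(t_3 - t_2,\, x_3) = \Phi\bigl(t_3 - t_2,\, \Phi(-t_3, p)\bigr) = \Phi(-t_2, p) = x_2,
\]
so the orbit of $x_3$ revisits the boundary point $x_2$ at time $s := t_3 - t_2$. But from the definition of $T^e_A$ as an infimum, for every $s \in (0, T^e_A(x_3))$ we have $\Phi(s, x_3) \in A$; since $0 < t_3 - t_2 < t_3 = T^e_A(x_3)$, this forces $x_2 \in A$. This contradicts $x_2 \in \partial A$ because $A$ is open and hence disjoint from its boundary.

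The argument is essentially a single short contradiction; the only place where one has to be careful is the distinctness of the $t_i$, where one must avoid the temptation to reduce to $t_i \bmod T$ on periodic orbits. Using the infimum $T^e_A(x_i)$ directly, distinctness is automatic, and the dual statement for $R_B$ would follow by applying the result via Lemma~\ref{lem_sym} if one wished to state it.
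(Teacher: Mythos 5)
Your proof is correct and takes essentially the same route as the paper's: both exploit the group property to write one preimage as $\Phi(t_j - t_i, x_j)$ with $0 < t_j - t_i < T^e_A(x_j)$, which forces a boundary point into the open set $A$, a contradiction. The paper packages this as a pairwise claim (of any two distinct preimages of $y$, one must be a fixed point of $E_A$, hence equal to $y$) while you run the same computation directly on three points, but the mathematical content is identical.
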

\begin{proof}
First, we show that $E_A(x_1) = E_A(x_2)$ and $x_1 \neq x_2$ imply that $x_1$ or $x_2$ is a fixed point of $E_A$. To obtain a contradiction, we assume
\[
	\begin{aligned}
		T_1 &:= T^e_A\qty(x_1) >0\\
		T_2 &:= T^e_A\qty(x_2) >0.
	\end{aligned}
\]
By the assumption and the property of the flow, we have $x_1 = \Phi\qty(-T_1, E_A(x_1)) = \Phi(T_2-T_1, x_2).$ Since $x_1\neq x_2,$ we have $T_1 \neq T_2$.
Without loss of generality, we may assume that $T_1 < T_2$. As $0\leq T_2 - T_1 < T_2,$, we have $x_1 = \Phi(T_2-T_1, x_2) \in A$, which is a contradiction. Therefore, $T_1= 0$ or $T_2= 0$, which implies $x_1$ or $x_2$ is a fixed point of $E_A$. 

Consequently, if $y\in \partial A$ and $E_A^{-1}(y)$ have two different elements $x_1$ and $x_2$, either of them is $y$. Therefore, the number of elements in $E_A^{-1}(y)$ cannot exceed two.
\end{proof} 
\begin{remark}
The first-out map can be one-to-one when $ A$ is backward invariant. Here we say a subset $A$ to be backward invariant if $\mathcal{O}^-(x) \subset A$ for all $x \in A$.
\end{remark}
Although the first-out and first-in maps are only partially defined, we have the following result. Recall that an open set $A$ is \emph{regular} if $\partial A = \partial \bar A$.
\begin{lemma}\label{lem_class}
Let $(X, \Phi)$ be a flow and $A \subset X$ a regular open set. Then, $E_A(x) = x$ or $R_{\bar A}(x) =x$ for each $x \in \partial A$. Consequently, $\partial A = \mathrm{dom}\, E_A \cup \mathrm{dom}\, R_{\bar A}$. 
\end{lemma}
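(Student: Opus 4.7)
The plan is to prove the dichotomy directly: given $x \in \partial A$ with $E_A(x) \neq x$, I will show $R_{\bar A}(x) = x$. Before starting, I would note that regularity of $A$ enters only to ensure $\partial \bar A = \partial A$, so that $R_{\bar A}$ is a partial map whose ambient domain coincides with $\partial A$; without regularity, the assertion $R_{\bar A}(x) = x$ for $x \in \partial A$ would not even be well-posed since first-in maps are defined on boundaries of closed sets.

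For the main step, fix $x \in \partial A$ and assume $E_A(x) \neq x$. This splits into two subcases. If $x \notin \mathrm{dom}\, E_A$, the remark following Definition \ref{def_exit} gives $\mathcal{O}^+(x) \setminus \{x\} \subset A$, so $\Phi(t,x) \in A$ for all $t > 0$. If instead $x \in \mathrm{dom}\, E_A$ with $T^e_A(x) > 0$, then $\Phi(t,x) \in A$ for every $t \in (0, T^e_A(x))$ directly from the definition of $T^e_A$ as an infimum. In either subcase there exists $\delta > 0$ such that $\Phi(t,x) \in A \subset \bar A$ for all $t \in (0, \delta)$, which forces $T^r_{\bar A}(x) = 0$. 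Hence $x \in \mathrm{dom}\, R_{\bar A}$ and $R_{\bar A}(x) = \Phi(0,x) = x$.

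The consequence $\partial A = \mathrm{dom}\, E_A \cup \mathrm{dom}\, R_{\bar A}$ then follows: the forward inclusion is the dichotomy just established (being a fixed point of $E_A$ or $R_{\bar A}$ places $x$ in the corresponding domain), while the reverse inclusion combines $\mathrm{dom}\, E_A \subset \partial A$ from the definition with $\mathrm{dom}\, R_{\bar A} \subset \partial \bar A = \partial A$ from regularity. I do not anticipate any serious obstacle; the content is essentially an unpacking of the definitions of $T^e_A$ and $T^r_{\bar A}$, together with the trivial inclusion $A \subset \bar A$. The only point requiring care is recognizing that $E_A(x) \neq x$ admits two logically distinct subcases (either $x \notin \mathrm{dom}\, E_A$, or $x \in \mathrm{dom}\, E_A$ with $T^e_A(x) > 0$), both of which must be handled in parallel.
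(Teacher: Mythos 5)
Your proof is correct and follows essentially the same route as the paper: the paper also reduces everything to the observation that $T^e_A(x)>0$ forces $\Phi(t,x)\in A$ for $t\in(0,T^e_A(x))$, hence $R_{\bar A}(x)=x$, while $T^e_A(x)=0$ gives $E_A(x)=x$. Your explicit separation of the subcase $x\notin\mathrm{dom}\,E_A$ is subsumed in the paper's case $T^e_A(x)>0$ (the infimum of the empty set being $+\infty$), so the two arguments coincide.
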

\begin{proof}
Let $x \in \partial A$ and 
\[
	T:= T^e_A(x).
\]
If $T = 0$, we have $E_A(x) = x$. If $T>0$, then 
\[
	\Phi(t,x) \in A
\]
for all $t \in (0,T)$. Therefore, $R_{\bar A} (x) = x$.
\end{proof}

\subsection{Types of boundary points}
In what follows, we assume that the open set $A$ is always regular.

According to Lemma \ref{lem_class}, each $x\in \partial A$ can be classified into one of the following types.
\begin{enumerate}
	\item (Type A) $x \in  \mathrm{dom}\, E_A \cap \mathrm{dom}\, R_{\bar A}.$
	\item (Type B, never-to-return points) $x \in  \mathrm{dom}\, E_A \backslash  \mathrm{dom}\, R_{\bar A}.$ In this case, $E_A(x) = x$ and $\mathcal{O}^+(x) \subset X \backslash A.$
	\item (Type C, never-to-leave points) $x \in  \mathrm{dom}\, R_{\bar A} \backslash  \mathrm{dom}\, E_A .$ In this case, $R_{\bar A}(x) = x$ and $\mathcal{O}^+(x) \subset \bar A.$
\end{enumerate}
Further, type A can be divided into three subclasses.
\begin{enumerate}
	\item (Type A-1, launching points) $E_A(x) = x$ and $R_{\bar A} (x) \neq x.$
	\item (Type A-2, diving points) $E_A(x) \neq x$ and $R_{\bar A} (x) = x.$
	\item (Type A-3, tangency points) $E_A(x) = x$ and $R_{\bar A} (x) = x.$
\end{enumerate}
In Figure \ref{fig_type}, we present a sketch of a forward trajectory from each type of boundary point. 
\begin{example}
Here we consider a affine system on $\mathbb{R}^2$ given by
\[
	\dv{t} \mqty(x \\ y) = \mqty(-\lambda & -\mu \\ \mu &-\lambda)\mqty(x \\ y - p),
\]
where $\lambda, \mu >0$ and $p \in \mathbb{R}.$
For $A:=\{(x,y) \mid y < 0\}$, the points on the boundary may change their types depending on the value of $p$. By a direct calculation, we obtain the following classification:
\begin{itemize}
	\item When $p > 0$, $(x, 0)$ is of type A-2 if $x < -\frac{\lambda p}{\mu}$ and type B if $x \geq- \frac{\lambda p}{\mu}$.
	\item When $p = 0$,  $(x, 0)$ is of type A-2 if $x <0$, type A-3 if $x =0$, and type A-1 if $x >0$.
	\item When $p < 0$, $(x, 0)$ is of type C if $x \leq -\frac{\lambda p}{\mu}$ and type A-1 if $x >- \frac{\lambda p}{\mu}$.
\end{itemize}
\end{example}
\begin{figure}[htbp]
    \centering
    \includegraphics[keepaspectratio, width = 10cm]{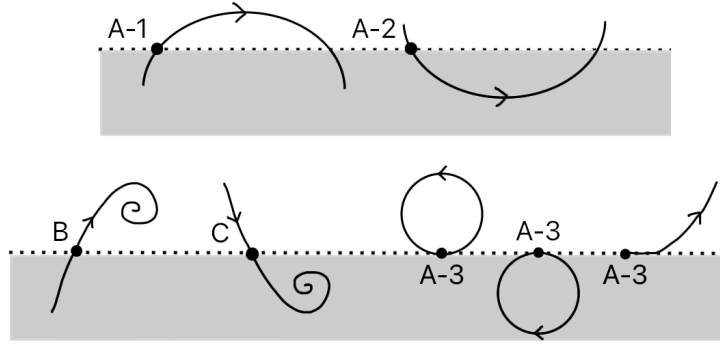}
  \caption{A sketch of a forward trajectory from each type of boundary point. }\label{fig_type}
\end{figure}

These classifications are topological.
\begin{theorem}[Main Theorem A]\label{thm_top}
Let $(X, \Phi)$ and $(Y, \Psi)$ be flows topologically equivalent via a homeomorphism $h: X \to Y$, and $A \subset X$ and $B \subset Y$ be regular open sets with $B = h(A)$. Then, $x \in \partial A$ and $h(x) \in \partial B$ are of the same type. 
\end{theorem}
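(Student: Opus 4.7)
The plan is to exploit the standard characterization of a topological equivalence $h$: for each $x\in X$ there is a continuous strictly increasing surjection $\tau_x:\mathbb{R}\to\mathbb{R}$ with $\tau_x(0)=0$ satisfying $h(\Phi(t,x))=\Psi(\tau_x(t),h(x))$ for all $t\in\mathbb{R}$. Because $h$ is a homeomorphism with $B=h(A)$, standard set-topological identities give $\bar B=h(\bar A)$ and $\partial B=h(\partial A)$, so in particular $h(x)\in\partial B$ whenever $x\in\partial A$. The type of a boundary point is determined by four pieces of (conditional) binary data: whether $x\in\mathrm{dom}\,E_A$, whether $E_A(x)=x$, whether $x\in\mathrm{dom}\,R_{\bar A}$, and whether $R_{\bar A}(x)=x$. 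It therefore suffices to show that each of these properties transfers across $h$.

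For the definedness of $E_A$ at $x$, I would use the orbit-level characterization from the remark following Definition~\ref{def_exit}: one has $x\in\mathrm{dom}\,E_A$ iff $\mathcal{O}^+(x)\backslash\{x\}\not\subset A$. Since $\tau_x$ restricts to a bijection of $[0,\infty)$, the identity $h\qty(\mathcal{O}^+(x)\backslash\{x\})=\mathcal{O}^+(h(x))\backslash\{h(x)\}$ holds; combined with $h(A)=B$, this transports the inclusion verbatim, so definedness is preserved in both directions.

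For the fixed-point condition $E_A(x)=x$, equivalent to $T^e_A(x)=0$, the plan is to pick a sequence $t_n>0$ with $t_n\to 0$ and $\Phi(t_n,x)\notin A$, and push it forward. Setting $s_n:=\tau_x(t_n)$, continuity and strict monotonicity of $\tau_x$ at the origin yield $s_n\to 0^+$, while $\Psi(s_n,h(x))=h(\Phi(t_n,x))\notin B$, hence $T^e_B(h(x))=0$. The converse follows by applying the same reasoning to $h^{-1}$, which is itself a topological equivalence carrying $B$ back to $A$.

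The corresponding statements for $R_{\bar A}$ are verbatim analogues, substituting the closed set $\bar A$ for $A$ and using $h(\bar A)=\bar B$. Assembling the four invariances shows that each of types A-1, A-2, A-3, B, C is preserved. The most delicate step is the fixed-point argument: the witnessing times $t_n$ do not literally persist, only their images $\tau_x(t_n)$ do, and the whole argument hinges on $\tau_x$ being continuous at $0$ with $\tau_x(0)=0$, so that small positive times are sent to small positive times. No global control of the time change is required, which is precisely what makes the classification robust under topological equivalence rather than needing true topological conjugacy.
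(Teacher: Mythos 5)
Your overall strategy --- transporting the four defining conditions (definedness of $E_A$ and $R_{\bar A}$, and the fixed-point property of each) across $h$ via a pointwise time reparametrization $\tau_x$ --- is sound in outline and close in spirit to the paper's, which instead proves the stronger equivariance $h\left(E^\Phi_A(x)\right)=E^\Psi_B\left(h(x)\right)$ on matching domains (Lemma~\ref{lem_conj}) and reads off the theorem. However, there is a genuine gap in your treatment of the fixed-point condition. You assert that $E_A(x)=x$ is equivalent to $T^e_A(x)=0$, and your argument only handles that case. The equivalence is false: if $x\in\partial A$ is a periodic point of minimal period $T$ with $\mathcal{O}(x)\backslash\{x\}\subset A$, then $T^e_A(x)=T>0$ while $E_A(x)=\Phi(T,x)=x$. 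This is exactly the periodic type A-3 configuration singled out in Theorem~\ref{thm_a3} (e.g.\ a rotation flow and a regular open set touching a circular orbit at a single boundary point), so it cannot be dismissed as vacuous. Your sequence $t_n\to 0^+$ with $\Phi(t_n,x)\notin A$ does not exist in this case, and the ``converse via $h^{-1}$'' likewise only covers $T^e_B(h(x))=0$; so the biconditional $E_A(x)=x \Leftrightarrow E_B(h(x))=h(x)$ is not established at these points. The case must be treated separately, e.g.\ by noting that $h(x)$ is again periodic, invoking Lemma~\ref{lem_per} to stay within one period, and checking that the exit point is forced to be $h(x)$ --- which is precisely the bookkeeping the paper performs inside Lemma~\ref{lem_conj}. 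The same degenerate points also touch your definedness step: the criterion ``$x\in\mathrm{dom}\,E_A$ iff $\mathcal{O}^+(x)\backslash\{x\}\not\subset A$'' fails for equilibria and for such periodic points, where $\mathcal{O}^+(x)\backslash\{x\}\subset A$ is compatible with $x\in\mathrm{dom}\,E_A$. (This imprecision is inherited from the paper's own remark.) The fix there is immediate: use the other form of that remark, $x\in\mathrm{dom}\,E_A$ iff $\Phi(t,x)\notin A$ for some $t>0$, which transports verbatim under $\tau_x$.

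In short, your reduction works away from equilibria and periodic boundary points, but those are exactly where type A-3 lives, so the proof as written is incomplete. Once those points are handled (they are preserved by topological equivalence, and the relevant conditions can be verified within one period), the argument closes; alternatively, proving the full identity $h\left(E^\Phi_A(x)\right)=E^\Psi_B\left(h(x)\right)$ as in Lemma~\ref{lem_conj} covers all cases uniformly, and also yields more (the actual conjugacy of the partial maps, not merely preservation of types), which the paper uses again later.
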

To prove this theorem, we present a few lemmas.

\begin{lemma}\label{lem_per}
Let $(X, \Phi)$ be a flow and $A \subset X$ an open set. If $x \in \partial A$ is a periodic point with minimal period $T>0$, we have $T^e_A(x) \leq T$ and $T^r_{\bar A}(x) \leq T$.
\end{lemma}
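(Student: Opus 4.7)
The plan is to observe that the periodicity hypothesis directly places $T$ itself in each of the two infimum-defining sets, which immediately gives the bounds. There is essentially no obstacle: the proof reduces to unpacking the definitions.

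First I would use that $A$ is open to record the elementary fact $\partial A \cap A = \emptyset$, so $x \in \partial A$ implies $x \notin A$. At the same time, $\partial A \subset \bar A$ gives $x \in \bar A$. These two facts are all we need about the location of $x$.

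Next, I would invoke periodicity: $\Phi(T,x) = x$. Combined with $x \notin A$, this means $T$ itself belongs to the set $\{t > 0 \mid \Phi(t,x) \notin A\}$, and hence
\[
T^e_A(x) = \inf\{t>0 \mid \Phi(t,x) \notin A\} \leq T.
\]
Combined instead with $x \in \bar A$, it puts $T$ in the set $\{t>0 \mid \Phi(t,x) \in \bar A\}$, yielding
\[
T^r_{\bar A}(x) = \inf\{t>0 \mid \Phi(t,x) \in \bar A\} \leq T.
\]
No continuity or minimality of the period is required; the bare fact that $\Phi(T,x)=x$ is enough for both inequalities, so the argument is a one-liner once the definitions are written out.
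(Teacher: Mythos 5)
Your argument is correct and is exactly the paper's proof, which simply notes that $\Phi(T,x) = x \in \bar A \setminus A$ and lets both bounds follow from the definitions of the infima. Nothing further is needed.
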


\begin{proof}
The results follow immediately from $\Phi(T, x) = x \in \bar A \backslash A$.
\end{proof}
\begin{lemma}\label{lem_conj}
Let $(X, \Phi)$ and $(Y, \Psi)$ be flows topologically equivalent via a homeomorphism $h: X \to Y$, and $A \subset X$ and $B \subset Y$ be open sets with $B = h(A)$. Then, we have
\[
	\begin{aligned}
		h\left(\mathrm{dom}\, E^\Phi_A\right)  &= \mathrm{dom}\, E^\Psi_B\\
		 h\left(\mathrm{dom}\, P^\Phi_{\bar A}\right)  &= \mathrm{dom}\, P^\Psi_{\bar B}\\
	\end{aligned}
\]
and
\[
	\begin{aligned}
		h\left(E^\Phi_A(x)\right)  &= E^\Psi_B\left(h(x)\right)\\
		 h\left(P^\Phi_{\bar A}(x)\right)  &= P^\Psi_{\bar B}\left(h(x)\right).\\
	\end{aligned}
\]

\end{lemma}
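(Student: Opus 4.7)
The plan is to invoke the standard reparametrization characterization of topological equivalence: there exists a continuous $\tau:\mathbb{R}\times X\to\mathbb{R}$ such that, for each $x\in X$, the map $t\mapsto\tau(t,x)$ is a strictly increasing homeomorphism of $\mathbb{R}$ onto itself with $\tau(0,x)=0$, and $h\qty(\Phi(t,x))=\Psi(\tau(t,x),h(x))$ for all $t\in\mathbb{R}$. In particular, $\tau(\cdot,x)$ restricts to an order-preserving bijection of $(0,\infty)$ onto itself, and by continuity together with strict monotonicity it preserves infima of nonempty bounded-below subsets of $\mathbb{R}$.

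For the domain equality, let $x\in\partial A$; since $h$ is a homeomorphism and $B=h(A)$, we have $h(x)\in\partial B$ and $h(X\setminus A)=Y\setminus B$. Using the conjugation identity, the set $\{t>0 : \Phi(t,x)\notin A\}$ is nonempty if and only if $\{s>0 : \Psi(s,h(x))\notin B\}$ is nonempty, which gives $x\in\mathrm{dom}\,E^\Phi_A$ if and only if $h(x)\in\mathrm{dom}\,E^\Psi_B$. For the equality of values, fix such an $x$, set $T:=T^e_A(x)$, and compute
\[
\tau(T,x)=\inf\{\tau(t,x) : t>0,\ \Phi(t,x)\notin A\}=\inf\{s>0 : \Psi(s,h(x))\notin B\}=T^e_B(h(x)),
\]
so that $h\qty(E^\Phi_A(x))=h\qty(\Phi(T,x))=\Psi(\tau(T,x),h(x))=E^\Psi_B(h(x))$, as required.

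The corresponding statements for the first-in maps follow either by the same argument or, more economically, by Lemma \ref{lem_sym} together with the observations that $h$ maps $X\setminus A$ bijectively onto $Y\setminus B$ and carries $\bar A$ onto $\bar B$. The main subtlety, rather than a genuine obstacle, is the careful use of the monotonicity of $\tau(\cdot,x)$ to transport the infimum defining $T^e_A$ across the equivalence; once that is in place, everything else is routine bookkeeping.
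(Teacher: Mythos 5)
Your argument is correct in substance but routes through a different device than the paper. The paper never introduces a reparametrization function: it notes that (away from equilibria, and for periodic points thanks to Lemma \ref{lem_per}, which bounds the exit time by the minimal period) each time $T$ corresponds to a unique $t>0$ with $h\qty(\Phi(t,x))=\Psi(T,h(x))$, proves the domain equality exactly as you do, and then establishes $T_2=T'_1$ by a two-sided comparison ($T_2\leq T'_1$ from Remark \ref{rem_map}, and $T_2<T'_1$ ruled out by pulling $E^\Psi_B(h(x))$ back through $h^{-1}$ to contradict the minimality of $T_1$). You instead posit a continuous, strictly increasing, fiberwise homeomorphism $\tau(\cdot,x)$ of $\mathbb{R}$ and transport the infimum directly; this is cleaner once $\tau$ is in hand, and the infimum computation is valid. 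Two caveats. First, you claim more than you need: joint continuity of $\tau$ on $\mathbb{R}\times X$ is a genuinely nontrivial assertion and is nowhere used in your argument; only the per-orbit statement matters, so you should weaken the citation accordingly. Second, even the per-orbit statement is not uniform across orbit types: for equilibria $\tau$ is arbitrary (and the claim is trivial), for periodic orbits $\tau(\cdot,x)$ is not uniquely determined and must be chosen (this is exactly the wrinkle the paper dispatches with Lemma \ref{lem_per}), and only on injective orbits is it canonical. If you make the case split explicit, your proof is complete and arguably more systematic than the paper's; the paper's version buys freedom from any reparametrization lemma at the cost of a slightly more ad hoc contradiction argument.
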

\begin{proof}
It is sufficient to show the results for the first-out maps, as we may use Lemma \ref{lem_sym} to obtain results for the first-in maps. The case for equilibrium points is obvious. Therefore, we may assume that, for each $T$, there exists unique $t >0$ such that $h\qty(\Phi(t,x)) = \Psi\qty(T,h(x))$. This is true even for periodic points because the first exit time is less than the first period by Lemma \ref{lem_per}.

First, we show that $h\left(\mathrm{dom}\, E^\Phi_A\right)  \subset \mathrm{dom}\, E^\Psi_B$. If $x \in \mathrm{dom}\, E^\Phi_A$, there exists $T>0$ with $\Phi(T,x) \not \in A$. Therefore, there exists $T'>0$ with $h\left(\Phi(T,x)\right) = \Psi(T',h(x)) \not \in h(A) = B$. Thus, $h(x) \in  \mathrm{dom}\, E^\Psi_B$. By considering $h^{-1}$, we obtain $h\left(\mathrm{dom}\, E^\Phi_A\right)  = \mathrm{dom}\, E^\Psi_B$.

Now, we show that $h\left(E^\Phi_A(x)\right)  = E^\Psi_B\left(h(x)\right).$ Let 
\[
	\begin{aligned}
		E^\Phi_A(x) &= \Phi(T_1,x),\\
		E^\Psi_B\left(h(x)\right) &= \Psi\left(T_2, h(x)\right),
	\end{aligned}
\]
where
\[
	\begin{aligned}
		T_1 &:= T^e_A\qty(x),\\
		T_2 &:= T^e_B\qty(h(x)).
	\end{aligned}
\]

Since $h\left(E^\Phi_A(x)\right) \in \mathcal{O}^+\left(h(x)\right)$, there exists $T'_1\geq 0$ with $h\left(E^\Phi_A(x)\right)  = \Psi\left(T'_1, h(x)\right).$ By Remark \ref{rem_map}, $h\left(E^\Phi_A(x)\right) \not \in B$. Therefore, $T_2 \leq T'_1$.

To demonstrate a contradiction, we assume $T_2 < T'_1$. In this case, we have
\[h^{-1}\left(E^\Psi_B(h(x))\right) = \Phi(T'_2, x) \not \in A\]
for some $T'_2 \in [0, T_1)$, because $E^\Psi_B(h(x)) \not \in B$. This contradicts the definition of $T_1$. Thus, $T_2 = T'_1$, and therefore, $h\left(E^\Phi_A(x)\right)  = E^\Psi_B\left(h(x)\right)$.
\end{proof}

\begin{proof}[Proof of Theorem \ref{thm_top}]
By Lemma \ref{lem_conj}, types A, B, and C are easily seen to be preserved. The subtypes of type A are also preserved because
we have $E^\Phi_A(x) = x$ if and only if $E^\Psi_B(h(x)) = h(x)$.
\end{proof}
Now we consider the relationship between the types and behavior of orbits. In general, an orbit of a flow may intersect with a boundary of an open set in a complicated fashion. For example, it is possible for a forward orbit from a point $x \in \partial A$ to satisfy the condition that there exist positive sequences $t_n \to 0$ and $s_n \to 0$ such that $\Phi(t_n, x) \in A$ and $\Phi(s_n,x) \not \in \bar A$. This behavior is observable in Example \ref{ex_comsin}, and we may regard it as a kind of complicated tangency. Therefore, as a first step, we would like to restrict our discussion to simpler cases.

In the study of differentiable flows, transversality is a criterion for the behavior of an orbit to be simple. Analogously, here we introduce the following notion of forward topological transversality. 

\begin{definition}\label{Top_trans}
Let $(\Phi, X)$ be a flow, where $X$ is an $n$-dimensional topological manifold. A submanifold $S \subset X$ is \emph{forward topologically transversal} to $\Phi$ at $x\in S$, if
\begin{enumerate}
	\item $S$ is of codimension one and locally flat.
	\item There exists a neighborhood $U$ of $x$ in $X$ and a homeomorphism $\phi: U \to B \subset \mathbb{R}^n$, where $B$ is a unit ball such that $\phi\left(U\cap S \right) = B \cap \mathbb{R}^{n-1}\times \{0\}.$ Further, there exist $\delta_{+}(x) >0$ such that $\Phi((0,\delta_{+}(x)],x)$ is contained in a connected component of $U \backslash S$.\end{enumerate} 

A submanifold $S \subset X$ is \emph{forward topologically transversal} to $\Phi$ if it is forward topologically transversal at every point on $S$. 
\end{definition}
Intuitively, if a point on a submanifold $S$ is forward topologically transversal, then it leaves $S$ and does not return to $S$ for some time. The difference from the usual notion of transversality is that we do not require the orbit to have been somewhere other than $S$ in the past.
\begin{remark}
Here we use the term topological manifold or submanifold under the assumption that they are without a boundary, according to the usage in literature \cite{lee2010introduction}. This is a prerequisite for condition (2) to be valid.
\end{remark}
\begin{remark}\label{rem_top_trans}
Let a submanifold $S \subset X$ is topologically transversal to $\Phi$ at $x\in S$ as defined in \cite{suda2022}, that is,
\begin{enumerate}
	\item $S$ is codimension one and locally flat.
	\item For each $x \in S,$ there exists a neighborhood $U$ of $x$ in $X$ and a homeomorphism $\phi: U \to B \subset \mathbb{R}^n,$ where $B$ is the unit ball such that $\phi\left(U\cap S \right) = B \cap \mathbb{R}^{n-1}\times \{0\}.$ Further, there exist $\delta_{+}(x) >0$ and $\delta_{-}(x) <0$ such that $\Phi(x, [\delta_{-}(x),0))$ and $\Phi(x, (0,\delta_{+}(x)])$ are contained in different connected components of $U \backslash S$ and \[\Phi(x, [\delta_{-}(x),\delta_{+}(x)])) \cap S = \{x\}.\]
	Here, $\delta_{+}$ and $\delta_{-}$ can be taken locally uniformly, namely, there exists a neighborhood $V \subset U$ of $x$ and $\delta >0$ such that $\delta_+(y) > \delta$ and $\delta_-(y) < - \delta$ for all $y \in V\cap S.$
	\item For each set of the form $\Phi(y, [a,b]),$ where $y \in X$ and $a, b \in \mathbb{R},$ $\Phi(y, [a,b])\cap S$ is compact in $S$.
\end{enumerate} 
Then, $\Phi$ and its time reversal $\Psi$ are forward topologically transversal.
\end{remark}
\begin{remark}\label{rem_eq}
If a submanifold $S \subset X$ is forward topologically transversal to $\Phi$ at $x\in S$, $x$ is not an equilibrium point. 
\end{remark}
The difference between forward topological transversality and topological transversality can be observed in the next example.
\begin{example}
We consider the following map from $\mathbb{R} \times \mathbb{R}^2$ to $ \mathbb{R}^2$:
\[
	\Phi(t, x, y) := \begin{cases}
					(x+t, y-t) & (x\geq 0 \text{ and }x+t \geq 0)\\
					(x+t, y-x-t) & (x<0 \text{ and }x+t \geq 0)\\
					(x+t,x+y) & (x\geq 0 \text{ and }x+t < 0)\\
					(x+t, y) & (x<0 \text{ and }x+t < 0)\
				\end{cases}
\]
This map is a continuous flow. We can check that the homeomorphism $h: \mathbb{R}^2 \to  \mathbb{R}^2 $ defined by
\[
		h(x, y) := \begin{cases}
					(x, y) & (x < 0)\\
					(x, x+y) & (x \geq 0). 
				\end{cases}
\]
satisfies $ \Psi(t, h(x,y))=  h \circ \Phi (t, x, y)$, where the flow $\Psi$ is defined by $\Psi(t, x, y) = (x +t ,y)$.
Then, the plane $y =0$ is forward topologically transversal to $\Phi$ at the origin because the orbit is given by
\[
	\qty(x(t), y(t)) = \begin{cases}
					(t, -t) & (t \geq 0)\\
					(t, 0) &(t < 0).
				\end{cases}
\]
 It is not topologically transversal because the backward orbit remain on $y=0$.
\end{example}

Forward topological transversality restricts the possible behavior of orbits. Namely, the forward orbit locally remains in the open set or in the interior of the complement.
\begin{lemma}\label{lem_inside}
Let $X$ be a topological manifold, $\Phi$ be a flow on $X$, $A \subset X$ an open set with the boundary being a locally flat manifold of codimension one. If $\partial A$ is forward topologically transversal to $\Phi$ at $x \in \partial A$, there exists $\delta >0$ with
\[
	\Phi\qty((0, \delta), x) \subset A
\]
 or
 \[
	\Phi\qty((0, \delta), x) \subset \mathrm{int}\,(X\backslash A).
\] 
\end{lemma}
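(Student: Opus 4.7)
The plan is to use the local flatness built into forward topological transversality to separate a neighborhood of $x$ into two sides of $\partial A$, and then argue that each side sits entirely inside $A$ or entirely inside $\mathrm{int}(X\setminus A)$.

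First, I would apply Definition \ref{Top_trans} at $x$ to get a neighborhood $U$ of $x$ and a homeomorphism $\phi: U \to B \subset \mathbb{R}^n$ onto a unit ball with $\phi(U\cap \partial A) = B \cap \mathbb{R}^{n-1}\times\{0\}$, together with a $\delta_+(x)>0$ such that $\Phi((0,\delta_+(x)],x)$ lies in a single connected component $V$ of $U\setminus \partial A$. Since $B\setminus(\mathbb{R}^{n-1}\times\{0\})$ has exactly two connected components, so does $U\setminus \partial A$.

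Next, since $A$ is open, the space decomposes as the disjoint union $X = A \sqcup \partial A \sqcup (X\setminus \bar A)$, with $X\setminus \bar A = \mathrm{int}(X\setminus A)$. Restricting this decomposition yields
\[
 U\setminus \partial A \;=\; (U\cap A)\,\sqcup\,(U\cap \mathrm{int}(X\setminus A)),
\]
a disjoint union of two open subsets of $U\setminus \partial A$. Any connected subset of $U\setminus \partial A$ must therefore lie entirely in one of these two pieces; in particular, the component $V$ is contained in $A$ or contained in $\mathrm{int}(X\setminus A)$.

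Setting $\delta := \delta_+(x)$, the inclusion $\Phi((0,\delta),x) \subset V$ then yields one of the two desired alternatives. The only subtlety is ensuring that the two sides of $\partial A$ in $U$ match up with the two open regions $A$ and $\mathrm{int}(X\setminus A)$; this is immediate from connectedness together with the openness of both $A$ and $X\setminus\bar A$, and is precisely where the locally flat codimension-one hypothesis on $\partial A$ does its work.
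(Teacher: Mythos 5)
Your proof is correct and follows essentially the same route as the paper: decompose $U\setminus\partial A$ as the disjoint union of the open sets $U\cap A$ and $U\setminus\bar A = U\cap\mathrm{int}(X\setminus A)$, and conclude by connectedness that the component containing $\Phi((0,\delta_+(x)],x)$ lies entirely in one of them. The extra remark that $U\setminus\partial A$ has exactly two components is harmless but not needed; connectedness of the single component $V$ already suffices.
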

\begin{proof}
Let $x \in \partial A$ be fixed, and U be an open neighborhood of $x$ in the definition of forward topological transversality. First, we observe 
\[
	U\backslash \partial A = (U\cap A) \cup (U\backslash \bar A),
\]
because $x\in \partial A,$ $U\cap A \neq \emptyset$ and $U\backslash \bar A \neq \emptyset$. Therefore, a connected component of $U \backslash \partial A$ is contained in either $U\cap A$ or $U\backslash \bar A$. The conclusion follows from the inclusion $U\backslash \bar A \subset \mathrm{int}\,(X\backslash A)$ and the definition of forward topological transversality.
\end{proof}
There are two possibilities of the behavior in Lemma \ref{lem_inside}. If the types of boundary points are known, we can determine which are feasible.
\begin{theorem}\label{thm_type_class}
Let $X$ be a topological manifold, $\Phi$ be a flow on $X$, and $A \subset X$ an open set with the boundary being a locally flat manifold of codimension one. If $\partial A$ is forward topologically transversal to $\Phi$, we have the following: 
\begin{enumerate}
	\item If $x \in \partial A$ is of type A-2 or C, there exists $\delta >0$ with
\[
	\Phi\qty((0, \delta), x) \subset A.
\]
	\item If $x \in \partial A$ is of type A-1 or B, there exists $\delta >0$ with
 \[
	\Phi\qty((0, \delta), x) \subset \mathrm{int}\,(X\backslash A).
\]
\end{enumerate}
\end{theorem}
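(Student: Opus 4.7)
My plan is to apply Lemma \ref{lem_inside} at $x$ to obtain $\delta_0 > 0$ such that either $\Phi((0, \delta_0), x) \subset A$ or $\Phi((0, \delta_0), x) \subset \mathrm{int}(X \setminus A)$, and then in each of the four cases to rule out the wrong alternative using only the definition of the type together with the definitions of $E_A$ and $R_{\bar A}$.

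Two of the cases are immediate from the infimum definition. For Type A-2, the condition $E_A(x) \neq x$ forces $T^e_A(x) > 0$, so $\Phi(t, x) \in A$ on $(0, T^e_A(x))$, excluding the second alternative. For Type A-1, $E_A(x) = x$ forces $T^e_A(x) = 0$, so there is a sequence $t_n \downarrow 0$ with $\Phi(t_n, x) \notin A$, excluding the first.

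For Types B and C I would combine the characterization of $\mathrm{dom}\, E$ stated in the remark following Definition \ref{def_exit} with Lemma \ref{lem_per}. For Type B, $x \notin \mathrm{dom}\, R_{\bar A}$ translates, via Lemma \ref{lem_sym} applied to $R_{\bar A} = E_{X \setminus \bar A}$, to $\mathcal{O}^+(x) \setminus \{x\} \subset X \setminus \bar A = \mathrm{int}(X \setminus A)$; since $x$ cannot be periodic by Lemma \ref{lem_per} (otherwise $R_{\bar A}(x)$ would be defined), this gives $\Phi(t, x) \in \mathrm{int}(X \setminus A)$ for every $t > 0$, excluding the first alternative. For Type C, $x \notin \mathrm{dom}\, E_A$ gives $\mathcal{O}^+(x) \setminus \{x\} \subset A$, and again Lemma \ref{lem_per} rules out periodicity, so $\Phi((0, \infty), x) \subset A$, excluding the second alternative. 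The main obstacle is precisely this periodicity subtlety in Types B and C: their defining conditions describe the forward orbit only away from the point $x$ itself, so the elimination of periodic points via Lemma \ref{lem_per} is the one place a nontrivial external input is required.
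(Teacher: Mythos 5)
Your argument is correct and follows the paper's route exactly: Lemma \ref{lem_inside} supplies the dichotomy, and the type of $x$ rules out the wrong branch --- the paper does precisely this, phrasing the elimination as a single contradiction (the excluded branch would force $E_A(x)=x$, resp.\ $R_{\bar A}(x)=x$, which is incompatible with types A-2/C, resp.\ A-1/B) rather than your four-way case split with the detour through Lemma \ref{lem_sym} and Lemma \ref{lem_per}, which is more machinery than needed since ``$R_{\bar A}(x)$ undefined'' already means $\Phi(t,x)\notin\bar A$ for \emph{every} $t>0$ directly from the infimum being over an empty set. One small inaccuracy to repair: in the A-1 case, the implication ``$E_A(x)=x$ forces $T^e_A(x)=0$'' is false in isolation --- a periodic point with $\mathcal{O}(x)\setminus\{x\}\subset A$ has $E_A(x)=x$ with $T^e_A(x)$ equal to the period --- so you should instead use the other half of the A-1 condition, $R_{\bar A}(x)\neq x$, which gives $T^r_{\bar A}(x)>0$ and hence $\Phi(t,x)\notin\bar A$ for all small $t>0$ (or note that $T^e_A(x)>0$ would force $R_{\bar A}(x)=x$, contradicting A-1).
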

\begin{proof}
We show the statement (1) because the proof of (2) is similar. Let $x \in \partial A$ be type A-2 or C.
By Lemma \ref{lem_inside}, there are two possibilities regarding the behavior of the forward orbit of $x$. To prove by contradiction, we assume that 
 \[
	\Phi(t, x) \in \mathrm{int}\,(X\backslash A)
\]
for all $t \in (0, \delta).$ Then, we have $E_A(x) = x$, which is not consistent with type A-2 or C. Therefore, we have
\[
	\Phi(t, x) \in A
\]
for all $t \in (0, \delta)$. 
\end{proof}
Type A-3 may be regarded as a degenerate case. The following theorem shows that other types imply forward topological transversality.
\begin{theorem}\label{thm_type}
Let $X$ be a topological manifold, $\Phi$ be a flow on $X$, and $A \subset X$ an open set with the boundary being a locally flat manifold of codimension one. If $\partial A$ is not forward topologically transversal to $\Phi$ at $x \in \partial A$,  $x$ is of type A-3.
\end{theorem}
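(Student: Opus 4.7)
The plan is to prove the contrapositive: supposing $x \in \partial A$ is of type A-1, A-2, B, or C, I would show that $\partial A$ is forward topologically transversal to $\Phi$ at $x$. Since $\partial A$ is assumed to be a locally flat codimension-one submanifold, condition (1) of Definition \ref{Top_trans} is already satisfied, and we obtain a chart $\phi : U \to B$ with $\phi(U \cap \partial A) = B \cap \mathbb{R}^{n-1} \times \{0\}$. The task therefore reduces to producing $\delta_+(x) > 0$ such that $\Phi((0, \delta_+(x)], x)$ lies in a single connected component of $U \setminus \partial A$.

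First I would observe that each non-A-3 type forces the forward orbit of $x$ to stay on one side of $\partial A$ for some positive time. A short case split yields $\eta > 0$ (possibly $\eta = \infty$) with either $\Phi((0, \eta), x) \subset A$ or $\Phi((0, \eta), x) \subset X \setminus \bar A$: for type A-1 take $\eta = T^r_{\bar A}(x) > 0$, giving $\Phi((0, \eta), x) \subset X \setminus \bar A$ directly from the definition of $T^r_{\bar A}$; for type A-2 take $\eta = T^e_A(x) > 0$, which by definition of $T^e_A$ gives $\Phi((0, \eta), x) \subset A$; for type B the entire forward orbit of $x$ is disjoint from $\bar A$, and for type C it lies wholly in $A$. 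By continuity of $\Phi$ at $(0, x)$, I may further shrink $\eta$ so that also $\Phi((0, \eta], x) \subset U$.

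Next I would identify the two connected components $U^\pm := \phi^{-1}(B^\pm)$ of $U \setminus \partial A$, where $B^\pm$ are the two open half-balls, with the sets $U \cap A$ and $U \setminus \bar A$. Since $A$ is open, the disjoint open sets $U \cap A$ and $U \setminus \bar A$ partition $U \setminus \partial A$, so by connectedness each $U^\pm$ is contained in exactly one of them. The regularity of $A$ gives $\partial A = \partial \bar A$, so $x \in \partial A$ is simultaneously a limit point of $A$ (as $x \in \bar A \setminus A$) and of $X \setminus \bar A$ (as $x \in \partial \bar A$); neither $U \cap A$ nor $U \setminus \bar A$ can therefore be empty, which forces one of $U^\pm$ to coincide with $U \cap A$ and the other with $U \setminus \bar A$.

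Combining the two steps, the piece of forward orbit produced in the first step lies wholly inside $U \cap A$ or wholly inside $U \setminus \bar A$, hence inside a single connected component of $U \setminus \partial A$, giving the required $\delta_+(x)$. The main subtlety is the third step, where one must identify the two local ``sides'' of $\partial A$ with $U \cap A$ and $U \setminus \bar A$ using regularity together with local flatness; everywhere else the argument is essentially an unpacking of the definitions of the types and of the hitting times $T^e_A$ and $T^r_{\bar A}$.
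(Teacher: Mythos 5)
Your proposal is correct, but it takes the opposite logical route from the paper. The paper argues directly: if forward topological transversality fails at $x$, then (since the chart of condition (1) exists by the local flatness hypothesis, and any connected orbit segment avoiding $\partial A$ would automatically lie in a single component) there must be a sequence $t_n \to 0^+$ with $\Phi(t_n,x)\in\partial A$; this forces $T^e_A(x)=T^r_{\bar A}(x)=0$, hence $E_A(x)=R_{\bar A}(x)=x$ and $x$ is of type A-3 --- a two-line argument. You instead prove the contrapositive via a four-way case split over types A-1, A-2, B, C, showing in each case that the forward orbit stays strictly on one side of $\partial A$ for a positive time, and then identify the two local sides with $U\cap A$ and $U\setminus\bar A$. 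Your route is sound: the case analysis correctly extracts $\eta>0$ from the positivity of $T^r_{\bar A}(x)$ or $T^e_A(x)$ (or from the emptiness of the relevant hitting set), and the component identification is essentially the paper's Lemma \ref{lem_inside}, which you reprove. What your route costs is a hidden dependence on the exhaustiveness of the five-type classification (Lemma \ref{lem_class}), and hence on the standing regularity assumption on $A$; the paper's direct argument needs neither. Two cosmetic points: for type B the forward orbit is not literally ``disjoint from $\bar A$'' since it contains $x\in\partial A$ --- the correct statement is $\mathcal{O}^+(x)\setminus\{x\}\subset X\setminus\bar A$, which is all you use; and your third step proves more than necessary, since a connected orbit segment contained in $U$ and avoiding $\partial A$ already lies in a single component of $U\setminus\partial A$ without identifying the components explicitly.
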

\begin{proof}
 If $\Phi$ is not forward topologically transversal to $\partial A$ at $x \in \partial A$, we may find a sequence $t_n >0$ with $\Phi(t_n, x) \in \partial A$ and $t_n \to 0$ as $n \to \infty.$ Then, $E_A(x) = R_{\bar A}(x) = x$ by definition.
\end{proof}
If type A-3 occurs at a boundary point with forward topological transversality, it should be a part of a periodic orbit. This is an analog of the classical result that the fixed points of first-return maps correspond to periodic points of the original flow.
\begin{theorem}\label{thm_a3}
Let $X$ be a topological manifold, $\Phi$ be a flow on $X$, and $A \subset X$ an open set with the boundary being a locally flat manifold of codimension one. If $\partial A$ is forward topologically transversal to $\Phi$ at $x \in \partial A$, $x$ is of type A-3 if and only if $x$ is a periodic point with
\[
	\mathcal{O}(x)\backslash \{x\} \subset A
\]
or
\[
	\mathcal{O}(x)\backslash \{x\} \subset X \backslash \bar A.
\]
\end{theorem}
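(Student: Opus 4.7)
The plan is to handle the two directions separately, using Lemma \ref{lem_inside} to pin down the local behavior of the forward orbit and then exploiting the definitions of $T^e_A$ and $T^r_{\bar A}$ to extract periodicity. For the forward direction, forward topological transversality at $x$ together with Lemma \ref{lem_inside} splits the argument into the two alternatives $\Phi((0,\delta),x)\subset A$ or $\Phi((0,\delta),x)\subset \mathrm{int}(X\setminus A)$.

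In the first alternative I would argue as follows. Since $\Phi(t,x)\in A$ for $t\in (0,\delta)$, we have $T^e_A(x)\geq \delta>0$; because $x$ is type A-3, $E_A(x)=x$ and $E_A(x)$ is defined, so $T^e_A(x)$ is a finite positive number with $\Phi(T^e_A(x),x)=x$. This makes $x$ periodic. If $T$ denotes its minimal period then $T\leq T^e_A(x)$, and conversely $\Phi(T,x)=x\notin A$ forces $T^e_A(x)\leq T$, hence $T=T^e_A(x)$. By the definition of $T^e_A(x)$ we conclude $\Phi((0,T),x)\subset A$, which gives $\mathcal{O}(x)\setminus\{x\}\subset A$. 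The second alternative is handled symmetrically via $R_{\bar A}$: from $\Phi((0,\delta),x)\subset \mathrm{int}(X\setminus A)\subset X\setminus\bar A$ we get $T^r_{\bar A}(x)\geq \delta>0$, and the condition $R_{\bar A}(x)=x$ forces periodicity with minimal period equal to $T^r_{\bar A}(x)$, so $\mathcal{O}(x)\setminus\{x\}\subset X\setminus\bar A$.

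The converse is a direct verification. If $x$ is periodic with minimal period $T$ and $\mathcal{O}(x)\setminus\{x\}\subset A$, then $\Phi(t,x)\in A$ for $t\in(0,T)$ and $\Phi(T,x)=x\notin A$, so $T^e_A(x)=T$ and $E_A(x)=x$; simultaneously $\Phi(t,x)\in A\subset \bar A$ for small $t>0$ gives $T^r_{\bar A}(x)=0$ and $R_{\bar A}(x)=x$. Hence $x$ is type A-3. The case $\mathcal{O}(x)\setminus\{x\}\subset X\setminus\bar A$ is symmetric: $T^r_{\bar A}(x)=T$ and $T^e_A(x)=0$ yield $R_{\bar A}(x)=\Phi(T,x)=x$ and $E_A(x)=x$.

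No real obstacle is expected; the only delicate point is to remember that $E_A(x)=x$ does not imply $T^e_A(x)=0$ (it can hold at a finite first exit time via periodicity), and likewise for $R_{\bar A}$. The role of forward topological transversality is precisely to rule out the pathological behavior where both $\Phi(t_n,x)\in A$ and $\Phi(s_n,x)\notin \bar A$ for sequences $t_n, s_n\downarrow 0$, so that Lemma \ref{lem_inside} gives a clean dichotomy from which the period can be read off.
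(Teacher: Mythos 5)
Your proposal is correct and follows essentially the same route as the paper: both use Lemma \ref{lem_inside} to obtain the dichotomy between the forward orbit entering $A$ or $X\setminus\bar A$, derive periodicity from the fixed-point condition of the relevant map ($E_A$ or $R_{\bar A}$) at a strictly positive exit/return time, identify that time with the minimal period, and verify the converse by direct computation of $T^e_A$ and $T^r_{\bar A}$. The only cosmetic difference is that the paper phrases the dichotomy as ``$T^e_A(x)=0,\ T^r_{\bar A}(x)>0$ or vice versa'' and explicitly cites Remark \ref{rem_eq} to exclude the equilibrium case, which in your setup is excluded automatically.
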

\begin{proof}
Let $x \in \partial A$ be of type A-3 and
\[
	\begin{aligned}
		T_1 &:= T^e_A(x)\\
		T_2 &:= T^r_{\bar A}(x).
	\end{aligned}
\]
By Lemma \ref{lem_inside}, $T_1=0$ and $T_2 >0$ or $T_1 > 0$ and $T_2 =0$. We now consider the former case.

As in Remark \ref{rem_eq}, $x$ is not an equilibrium point. Therefore, $x = R_{\bar A}(x) = \Phi(T_2, x)$ is a periodic point. By the definition of $T_2$, we have $\Phi(t,x)\not \in \bar A$ for all $t \in (0, T_2)$. Thus, we obtain $\mathcal{O}(x)\backslash \{x\} \subset X \backslash \bar A$. The proof for the other case is similar.

Conversely, let $x$ be a periodic point with $\mathcal{O}(x)\backslash \{x\} \subset A $,
and the minimal period be $T>0$. Then, we have $\Phi(t,x) \in A$ for all $t \in (0,T)$, and consequently, $R_{\bar A}(x) = x$. Because $\Phi(T,x) \not \in A,$ $E_A(x) =\Phi(T,x)= x.$ Therefore, $x \in \partial A$ is of type A-3. The proof for the case $\mathcal{O}(x)\backslash \{x\} \subset X \backslash \bar A$ is similar.
\end{proof}

The invariance of open subsets can be expressed in terms of the type of points on the boundary. This can be seen as a generalization of similar results regarding smooth manifolds and smooth flows.

\begin{theorem}\label{thm_bkwd}
Let $(X, \Phi)$ be a flow and $A \subset X$ an open set. Then, $A$ is backward invariant if and only if all points on $\partial A$ are of type B.
\end{theorem}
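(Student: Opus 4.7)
The plan is to prove the two implications separately, both leaning on Lemma~\ref{lem_IVT}.

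For the $(\Leftarrow)$ direction I argue by contraposition. If $A$ is not backward invariant, there exist $x \in A$ and $t_0 > 0$ with $\Phi(-t_0, x) \notin A$. Applying Lemma~\ref{lem_IVT} to the continuous curve $s \mapsto \Phi(-s, x)$ on $[0, t_0]$ yields some $s^{*} \in (0, t_0]$ with $y := \Phi(-s^{*}, x) \in \partial A$. But then $\Phi(s^{*}, y) = x \in A \subset \bar A$ witnesses $s^{*} \in \{t > 0 \mid \Phi(t, y) \in \bar A\}$, so $y \in \mathrm{dom}\, R_{\bar A}$, contradicting type B of $y$.

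For the $(\Rightarrow)$ direction I fix $x \in \partial A$ and verify both defining conditions of type B. First, $E_A(x) = x$: if $\Phi(t, x) \in A$ for some $t > 0$, then $x = \Phi(-t, \Phi(t, x)) \in A$ by backward invariance, contradicting $x \in \partial A$; hence $\Phi(t, x) \notin A$ for every $t > 0$, giving $T^e_A(x) = 0$ and $x \in \mathrm{dom}\, E_A$ with $E_A(x) = x$. Second, $x \notin \mathrm{dom}\, R_{\bar A}$: we need $\Phi(t, x) \notin \bar A$ for all $t > 0$, which by the previous step reduces to excluding $\Phi(t, x) \in \partial A$. Supposing toward a contradiction that $\Phi(t_0, x) \in \partial A$ for some $t_0 > 0$, I would use the regularity identity $\partial A = \partial \bar A$ to approximate $\Phi(t_0, x)$ by points $w_n \in A$, whose backward translates $\Phi(-t_0, w_n)$ must remain in $A$ by backward invariance while converging to $x$ by continuity of the flow.

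The main obstacle lies in turning this last approximation into a genuine contradiction: the sequence $\Phi(-t_0, w_n) \in A$ converging to $x \in \bar A$ is by itself entirely consistent with backward invariance. Thus the argument must extract an additional structural constraint, most naturally a forward topological transversality-type condition on the orbit at $\Phi(t_0, x)$ of the sort isolated in Theorem~\ref{thm_type_class}, to rule out the orbit merely grazing $\partial A$. I anticipate combining the regularity of $A$ with a locally flat neighborhood of $\Phi(t_0, x)$, inside which the approximating points $w_n$ can be routed through one of the two local components of $U \setminus \partial A$, thereby producing an orbit starting in $A$ whose backward translate truly escapes $A$ and violates backward invariance.
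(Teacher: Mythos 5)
Your $(\Leftarrow)$ direction is correct and is essentially the paper's argument: both run a contraposition through Lemma~\ref{lem_IVT} to locate a boundary point $y$ on an orbit that passes from outside $A$ into $A$, and both contradict type B at $y$; the only cosmetic difference is that you parametrize the orbit backwards in time from the interior point, while the paper parametrizes it forwards from the exterior one.

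The obstacle you isolate in the $(\Rightarrow)$ direction is genuine, and you are right that your approximation argument with the points $w_n$ cannot close it; but it also cannot be closed by importing forward topological transversality, since no such hypothesis appears in the theorem. Under the literal definition of type B ($x\in\mathrm{dom}\,E_A\setminus\mathrm{dom}\,R_{\bar A}$, i.e.\ $\Phi(t,x)\notin\bar A$ for every $t>0$), the forward implication is actually false: for the translation flow $\Phi(t,(x,y))=(x+t,y)$ and $A=\{y<0\}$, the set $A$ is backward invariant, yet every $x\in\partial A$ has its entire forward orbit inside $\partial A\subset\bar A$, so $R_{\bar A}(x)=x$ is defined and $x$ is of type A-3, not B. What the paper's own proof establishes is exactly your first step --- $X\setminus A$ is forward invariant, hence $\mathcal{O}^+(x)\subset X\setminus A$ and $E_A(x)=x$ --- and it then simply declares $x$ to be of type B, i.e.\ it silently treats the descriptive consequence ``$E_A(x)=x$ and $\mathcal{O}^+(x)\subset X\setminus A$'' as the meaning of type B. If you adopt that weaker reading, your first paragraph already finishes the forward direction and nothing more is needed; if you insist on the stated definition, the theorem needs an added hypothesis excluding forward orbits of boundary points that remain in $\partial A$. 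Either way, the extra machinery you propose (routing approximating points through local components of $U\setminus\partial A$) should be dropped rather than developed.
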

\begin{proof}
 Let $x \in \partial A$. If $A$ is backward invariant, then $X \backslash A$ is forward invariant. Therefore, $\mathcal{O}^+(x) \subset X \backslash A$ and $E_A(x) =x$. Thus, $x$ is of type B. 

Conversely, we assume all points on $\partial A$ are of type B. If there exists $x \in X \backslash A$ with $\Phi(T,x) \in A$ for some $T>0$, there is $t_0 \in (0, T)$ such that $y :=\Phi(t_0, x) \in \partial A$ by Lemma \ref{lem_IVT}. This is contradictory because $y$ is of type B, and consequently, $\Phi(T,x) = \Phi(T-t_0, y) \not \in A$. Therefore, $A$ is backward invariant.
\end{proof}

Due to the problem of tangency, a similar characterization of forward invariance is more complicated. 

\begin{theorem}
Let $X$ be a topological manifold, $\Phi$ be a flow on $X$, and $A \subset X$ be an open set with the boundary being a locally flat manifold of codimension one. Then, $A$ is forward invariant and $\partial A$ is forward topologically transversal to $\Phi$ if and only if all points on $\partial A$ are of type C.
\end{theorem}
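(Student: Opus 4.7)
The plan is to combine Lemma \ref{lem_IVT} with the local two-component decomposition $U \setminus \partial A = (A \cap U) \sqcup (U \setminus \bar A)$ at every $x \in \partial A$, which is available because $A$ is a regular open set whose boundary is a locally flat codimension-one manifold.

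For the forward direction, fix $x \in \partial A$. Forward topological transversality supplies a chart neighborhood $U$ and $\delta > 0$ with $\Phi((0, \delta], x)$ contained in one component of $U \setminus \partial A$. This component cannot be $U \setminus \bar A$: I would pick $x_n \in A$ with $x_n \to x$, then use continuity of $\Phi$ together with the openness of $X \setminus \bar A$ to obtain $\Phi(t_0, x_n) \notin A$ for some $t_0 \in (0, \delta)$ and large $n$, contradicting forward invariance. Hence $\Phi((0, \delta], x) \subset A$, and a further application of forward invariance at $\Phi(t_1, x)$ for any $t_1 \in (0, \delta)$ extends this to $\Phi(t, x) \in A$ for every $t > 0$. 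Thus $x \notin \mathrm{dom}\, E_A$, so $x$ is of type C.

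For the converse, assume every point of $\partial A$ is of type C. Forward topological transversality is then immediate: for $x \in \partial A$, type C gives $\Phi(t, x) \in A$ for every $t > 0$, so for small enough $\delta$ one has $\Phi((0, \delta], x) \subset A \cap U$, a single connected component of $U \setminus \partial A$.

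The main obstacle is establishing forward invariance. I would argue by contradiction: assume $y \in A$ and $\Phi(T, y) \notin A$ for some $T > 0$, set $t_0 := \inf\{t > 0 : \Phi(t, y) \notin A\} \in (0, T]$, and combine openness of $A$ with Lemma \ref{lem_IVT} to conclude $\Phi(s, y) \in A$ on $[0, t_0)$ and $w := \Phi(t_0, y) \in \partial A$. The case $t_0 < T$ is dispatched directly: type C of $w$ forces $\Phi(T, y) = \Phi(T - t_0, w) \in A$, a contradiction. The delicate case is the tangential kiss $t_0 = T$, where $z := \Phi(T, y) \in \partial A$ is itself type C. To handle it I perturb $y$: since $\Phi(T, \cdot)$ is a homeomorphism of $X$, any open neighborhood $V \subset A$ of $y$ is carried to an open neighborhood of $z$, and because $z \in \partial A$ with $A$ regular open, this neighborhood meets $X \setminus \bar A$, so I can pick $y' \in V$ with $z' := \Phi(T, y') \in X \setminus \bar A$. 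Applying Lemma \ref{lem_IVT} to $s \mapsto \Phi(s, y')$ on $[0, T]$ yields $t' \in (0, T)$ and $w' := \Phi(t', y') \in \partial A$ with $\Phi(T - t', w') = z' \notin A$, so $w' \in \mathrm{dom}\, E_A$. This contradicts type C at $w'$ and closes the argument.
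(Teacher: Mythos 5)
Your proof is correct and follows essentially the same route as the paper: the forward direction uses the local two-component decomposition of Lemma \ref{lem_inside} together with forward invariance to rule out the outer component, and the converse combines the definition of type C with Lemma \ref{lem_IVT}. One small imprecision: $A\cap U$ need not itself be a single connected component of $U\setminus\partial A$; what matters is that the connected arc $\Phi((0,\delta],x)$ lies in $U\setminus\partial A$ and hence in one component, which is all the definition requires. More substantively, your treatment of the tangential case $t_0=T$ is a genuine improvement over the paper's own argument: the paper takes $t_0\in(0,T]$ from Lemma \ref{lem_IVT} and asserts $\Phi(T-t_0,y)\in A$ by type C, which fails when $t_0=T$, since then $\Phi(T-t_0,y)=y\in\partial A$ and no contradiction with $\Phi(T,x)\notin A$ results. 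Your perturbation of the initial point to some $y'$ whose image $\Phi(T,y')$ lands in $X\setminus\bar A$ (possible because $A$ is regular, so $z\in\overline{X\setminus\bar A}$) produces a strictly interior crossing time $t'\in(0,T)$ and a boundary point $w'\in\mathrm{dom}\,E_A$, closing exactly the case the paper leaves open.
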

\begin{proof}
Let $A$ be forward invariant, and $\partial A$ be forward topologically transversal to $\Phi$. We fix $x \in \partial A$. By Lemma \ref{lem_inside}, there are two possible cases for the behavior of $x$. Since $\bar A$ is forward invariant, there exists $\delta >0 $ with $\Phi((0, \delta), x) \subset A$. By the invariance of $A$, it follows that $\Phi(t,x) \in A$ for all $t>0$. Therefore, $x$ is of type C.

Conversely, let all points on $\partial A$ be of type C. By Theorem \ref{thm_type}, $\Phi$ is forward topologically transversal to $\partial A$. If there exists $x \in A$ with $\Phi(T, x) \not \in A$ for some $T>0$, there is $t_0 \in (0, T]$ with $y:= \Phi(t_0, x) \in \partial A$ by Lemma \ref{lem_IVT}. Since $y$ is of type C, $\Phi(T, x) = \Phi(T-t_0, y) \in A$. This is a contradiction, and therefore, $A$ is forward invariant.
\end{proof}
Since the first-out map is defined only partially, it is of interest to consider the topological properties of the domain. If the assumption of forward topological transversality is imposed, we may obtain some information regarding this point. 

\begin{theorem}
Let $X$ be a topological manifold, $\Phi$ be a flow on $X$, $A \subset X$ an open set with the boundary being a locally flat manifold of codimension one. If each point in $\mathrm{im}\, E_A$ is of type A-1 or B and $\partial A$ is forward topologically transversal to $\Phi$, the domain of $E_A$ is open in $\partial A$. 
\end{theorem}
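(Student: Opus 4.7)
The plan is to fix an arbitrary $x \in \mathrm{dom}\, E_A$ and construct an open neighborhood of $x$ in $\partial A$ that lies inside $\mathrm{dom}\, E_A$. I would set $T := T^e_A(x) \geq 0$ and $y := E_A(x) = \Phi(T,x) \in \partial A$. Since $y$ lies in $\mathrm{im}\, E_A$, the standing hypothesis forces $y$ to be of type A-1 or type B; in particular, $E_A(y) = y$.

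Next I would invoke Theorem \ref{thm_type_class}(2) applied at $y$: forward topological transversality of $\partial A$ to $\Phi$, together with $y$ being of type A-1 or B, yields some $\delta > 0$ such that $\Phi\qty((0,\delta), y) \subset \mathrm{int}\,(X\backslash A)$. Fixing any $s \in (0,\delta)$ then gives $\Phi(T+s,x) = \Phi(s,y) \in \mathrm{int}\,(X\backslash A)$, and this latter set is open and disjoint from $A$.

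The final step is a direct continuity argument. Continuity of the time-$(T+s)$ map $\Phi(T+s,\cdot): X \to X$ produces an open neighborhood $V$ of $x$ in $X$ with $\Phi\qty(T+s, V) \subset \mathrm{int}\,(X\backslash A)$. For every $x' \in V \cap \partial A$ one has $\Phi(T+s, x') \not\in A$, so $T^e_A(x') \leq T+s$ is finite and $x' \in \mathrm{dom}\, E_A$. Hence $V \cap \partial A$ is the desired open neighborhood of $x$ in $\partial A$.

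The main conceptual point, and the reason for the hypothesis on $\mathrm{im}\, E_A$, is that it is exactly what rules out type A-3 at the image point $y$. A tangency of type A-3 at $y$ would allow the forward orbit of $y$ to re-enter $A$ at times arbitrarily close to $0$, so no uniform $\delta > 0$ pushing it into $\mathrm{int}\,(X\backslash A)$ would be available, and continuity of $\Phi$ alone would no longer transmit the defining property of $\mathrm{dom}\, E_A$ to nearby points. Once A-3 is excluded at $y$, openness collapses to a single use of continuity; notably, there is no need to control the exit time function $T^e_A$ near $x$, only to secure that some positive exit time exists.
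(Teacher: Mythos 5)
Your proof is correct and follows essentially the same route as the paper: identify the image point $y=E_A(x)$ as type A-1 or B, apply Theorem \ref{thm_type_class}(2) to push the orbit into $\mathrm{int}\,(X\setminus A)$ at some positive time, and propagate this to a neighborhood of $x$ by continuity. The only cosmetic difference is that you handle the cases $T=0$ and $T>0$ uniformly via the time-$(T+s)$ map, whereas the paper splits into two cases and pulls back a neighborhood of $y$ by $\Phi(-T,\cdot)$; the substance is identical.
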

\begin{proof}
Let $x\in \mathrm{dom}\, E_A$ and 
\[
	T: = T^e_A(x).
\] 
Let us first consider the case where $T=0.$ Then, by the hypothesis, $x = E_A(x)$ is of type A-1 or B. By Theorem \ref{thm_type_class}, there exists $\tau>0$ with
\[
	\Phi(\tau, x) \in \mathrm{int}\, (X \backslash A).
\]
By the continuity of $\Phi,$ we may take an open neighborhood $U$ of $x$ in $X$ such that $\Phi(\tau, U) \subset X \backslash A.$ Therefore, $\partial A \cap U \subset \mathrm{dom}\, E_A.$

Next, let us consider the case where $T>0.$ Then, by the hypothesis, $y = E_A(x) = \Phi(T, x)$ is of type A-1 or B. By the aforementioned argument, there is an open neighborhood $V$ of $y$ and $\tau >0$ such that $\Phi(\tau ,V) \subset  X \backslash A.$ Therefore, $\partial A \cap \Phi(-T, V) \subset \mathrm{dom}\, E_A.$
\end{proof}
\begin{corollary}
Let $X$ be a topological manifold, $\Phi$ be a flow on $X$, $A \subset X$ an open set with the boundary being a locally flat manifold of codimension one. If $E_A$ is idempotent, $\partial A$ is forward topologically transversal to $\Phi$, and there are no periodic points on $\partial A$, then the domain of $E_A$ is open in $\partial A$. 
\end{corollary}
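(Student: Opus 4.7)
The plan is to reduce this corollary directly to the preceding theorem by showing that its hypothesis forces every point in $\mathrm{im}\, E_A$ to be of type A-1 or B; then the previous theorem gives the openness of $\mathrm{dom}\, E_A$.

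First, I would fix $y \in \mathrm{im}\, E_A$ and pick $x \in \mathrm{dom}\, E_A$ with $E_A(x) = y$. Since $E_A$ is idempotent, $E_A(y) = E_A(E_A(x)) = E_A(x) = y$, so in particular $y \in \mathrm{dom}\, E_A$ and $y$ is fixed by $E_A$. This immediately rules out types A-2 (where $E_A(y) \neq y$) and C (where $E_A(y)$ is undefined). Hence $y$ must be of type A-1, A-3, or B.

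Next I would rule out type A-3 using the assumption on periodic points. Since $\partial A$ is forward topologically transversal to $\Phi$, Theorem \ref{thm_a3} applies: any type A-3 boundary point is necessarily a periodic point of $\Phi$. By hypothesis there are no periodic points on $\partial A$, so $y$ cannot be of type A-3. We conclude $y$ is of type A-1 or B.

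Since $y \in \mathrm{im}\, E_A$ was arbitrary, every point of $\mathrm{im}\, E_A$ is of type A-1 or B. The previous theorem now applies verbatim to show that $\mathrm{dom}\, E_A$ is open in $\partial A$. The only nontrivial step is the invocation of Theorem \ref{thm_a3}, and this is clean given forward topological transversality; all other steps are immediate from the definitions, so I do not anticipate any real obstacle.
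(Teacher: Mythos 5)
Your proposal is correct and follows essentially the same route as the paper: idempotency forces every point of $\mathrm{im}\, E_A$ to be fixed by $E_A$ (hence of type A-1, A-3, or B), forward topological transversality together with the absence of periodic points rules out A-3 via Theorem \ref{thm_a3}, and the preceding theorem then yields openness of the domain. Your write-up merely makes explicit the step the paper leaves implicit, namely why idempotency excludes types A-2 and C.
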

\begin{proof}
Because $E_A$ is idempotent, each point in $\mathrm{im}\, E_A$ is of type A-1, A-3, or B. By Theorem \ref{thm_a3}, the type A-3 is incompatible with the assumption that there are no periodic points. Therefore, each point in $\mathrm{im}\, E_A$ is of type A-1 or B.
\end{proof}

If we further impose the assumption of topological transversality, the continuity of first-out maps can be shown.

\begin{theorem}
Let $X$ be a topological manifold, $\Phi$ be a flow on $X$, $A \subset X$ an open set with the boundary being a locally flat manifold of codimension one. If $\partial A$ is topologically transversal to $\Phi$ in the sense of the definition in Remark \ref{rem_top_trans} at $x$ and $E_A(x)$, then, $E_A$ is continuous at $x$. 
\end{theorem}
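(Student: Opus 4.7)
The plan is to set $T = T^e_A(x)$ and $y = E_A(x) = \Phi(T, x)$, to show $T^e_A(x_n) \to T$ for every sequence $x_n \to x$ in $\partial A$, and then to conclude $E_A(x_n) = \Phi(T^e_A(x_n), x_n) \to \Phi(T, x) = E_A(x)$ by joint continuity of $\Phi$. The cases $T = 0$ and $T > 0$ are treated separately; both rely on the local uniformity of the transversality constants $\delta_\pm$ recorded in Remark \ref{rem_top_trans}, which ensures that nearby points on $\partial A$ behave like $x$ or $y$ over a uniform time window.

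If $T = 0$, then $y = x$. Topological transversality at $x$ forces $\Phi((0, \delta_+], x)$ to lie in the component of $U_x \backslash \partial A$ disjoint from $A$, namely $U_x \backslash \bar A \subset X \backslash A$. The local uniformity clause supplies a neighborhood $W$ of $x$ in $\partial A$ and a $\delta > 0$ with $\Phi((0, \delta], z) \subset U_x \backslash \bar A$ for every $z \in W$. Hence $T^e_A(x_n) = 0$ and $E_A(x_n) = x_n \to x$ for every sequence $x_n \to x$ with $x_n \in W$.

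Suppose instead $T > 0$. Then $\Phi((0,T), x) \subset A$, and since the backward orbit of $y$ therefore lies in $A$, the transversality at $y$ forces $\Phi((0, \delta^y_+], y) \subset U_y \backslash \bar A$. Fix $\eta > 0$ smaller than $\min\{T, \delta^y_+\}$. For the upper bound, joint continuity of $\Phi$ yields $\Phi(T + \eta, x_n) \to \Phi(\eta, y) \in X \backslash A$, so $\Phi(T + \eta, x_n) \notin A$ and $T^e_A(x_n) \leq T + \eta$ for large $n$. For the lower bound, the local uniformity at $x$ supplies a neighborhood $W$ of $x$ in $\partial A$ and $\delta \in (0, T - \eta)$ with $\Phi((0, \delta], x_n) \subset U_x \cap A$ for $x_n \in W$; meanwhile, $\Phi([\delta, T - \eta], x)$ is a compact subset of the open set $A$, so joint continuity gives $\Phi([\delta, T - \eta], x_n) \subset A$ for large $n$. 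Combining the two intervals yields $\Phi((0, T - \eta], x_n) \subset A$, hence $T^e_A(x_n) \geq T - \eta$. Since $\eta$ is arbitrary, $T^e_A(x_n) \to T$.

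The main obstacle is the lower bound in the case $T > 0$: uniform continuity alone cannot cover the initial segment $(0, \delta]$ because the orbit of $x$ starts on $\partial A$, and it is precisely the local uniformity clause in the topological transversality definition that rules out nearby orbits crossing $\partial A$ into $X \backslash A$ immediately. Everything else is bookkeeping with joint continuity together with the flow-box identification $U_x \backslash \partial A = (U_x \cap A) \sqcup (U_x \backslash \bar A)$, valid since $A$ is regular.
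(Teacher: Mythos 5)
Your argument is correct in substance, but it takes a genuinely different route from the paper: the paper disposes of this theorem in one line by observing that $\partial A$ is a local section at $x$ and $E_A(x)$ and invoking Main Theorem B of \cite{suda2022}, whereas you give a self-contained $\varepsilon$--$\delta$ proof that $T^e_A(x_n)\to T^e_A(x)$ and then apply joint continuity of $\Phi$. What your version buys is independence from the external flow-box machinery and, as a by-product, the fact that points of $\partial A$ near $x$ are eventually in $\mathrm{dom}\,E_A$ (your upper-bound step exhibits an explicit exit time for them); what the paper's citation buys is brevity and a uniform treatment of all section-based return maps at once.

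One step should be made explicit. Twice you assert that the local uniformity clause \emph{by itself} places the segments $\Phi\qty((0,\delta],z)$ for nearby $z\in\partial A$ on a prescribed side of $\partial A$ (in $U_x\setminus\bar A$ in the case $T=0$, in $U_x\cap A$ in the case $T>0$). The clause only guarantees that each such segment lies in a single connected component of $U\setminus\partial A$, hence entirely in $A$ or entirely in $X\setminus\bar A$; it does not say which. To pin down the side, evaluate at the endpoint: $\Phi(\delta,x)$ lies in the open set $X\setminus\bar A$ (resp.\ $A$), so by continuity $\Phi(\delta,z)$ lies there for all $z$ in a possibly smaller neighborhood $W$, and since the whole segment $\Phi\qty((0,\delta],z)$ is on one side, it is on that side. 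With this one-line endpoint test inserted at both occurrences, the proof is complete; the regularity of $A$ (needed so that $U_y$ meets both $A$ and $X\setminus\bar A$ and hence the two components of $U_y\setminus\partial A$ lie on opposite sides) is guaranteed by the standing assumption of Section 3.2.
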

\begin{proof}
The continuity follows from Main Theorem B in \cite{suda2022} since $\partial A$ is a local section of $\Phi$ at $x$ and $E_A(x)$. 
\end{proof}
%
%
\section{First-out maps of planar flows}
This section considers the restriction for the first-out maps and first-in maps of planar flows. It is natural to expect that only some partial maps can be derived as a first-out map for some flow because correspondences are restricted by the property that orbits of a flow never intersect each other. This restriction can be analyzed in a rather concrete form in the planar case.

If the boundary of an open set is a Jordan curve, each parametrization induces a sequence of types. As there are forbidden combinations of types, the possible forms of first-out maps and first-in maps can be restricted. Moreover, this sequence of types can be used to study the dynamics around boundary points because they reflect the local dynamics. 

Another way to consider restrictions is the parametrized representation of first-out maps. In this case, they are just one-dimensional partial maps. Here we will consider the necessary conditions for a partial map to be derived from a parametrization of a first-out map for some flow.
\subsection{Type sequence}

If the boundary of an open subset of $\mathbb{R}^2$ is parametrized, a sequence of types is naturally defined. Note that an open set encircled by a Jordan curve is regular by the Jordan--Schoenflies theorem.
\begin{definition}
Let $\Phi: \mathbb{R}\times \mathbb{R}^2 \to \mathbb{R}^2$ be a continuous flow and $A \subset \mathbb{R}^2$ be an open subset with $\partial A$ being a Jordan curve. 
For a parametrization $c:[0,1]$ of $\partial A$, the \emph{type sequence} of $c$ is a map $L_c:[0,1] \to \{\text{A-1}, \text{A-2}, \text{A-3},\text{B}, \text{C}\}$ defined by setting $L_c(t)$ to be of type $c(t)$.
\end{definition}
\begin{example}
Let us consider the flow in Example \ref{ex_exmap}. If we parametrize the unit circle by $c:[0,1) \to \mathbb{R}^2$ where $c(t):=\qty(\cos 2 \pi t, \sin 2 \pi t),$ we obtain
\[
	L_c(t) =\begin{cases}
				\text{B} & (0\leq t \leq \frac{1}{8})\\
				\text{A-2} &( \frac{1}{8} < t < \frac{1}{4})\\
				\text{C} & (t = \frac{1}{4})\\
				\text{A-2} &( \frac{1}{4} < t < \frac{3}{8})\\
				\text{B} & (\frac{3}{8}\leq t \leq \frac{5}{8})\\
				\text{A-2} &( \frac{5}{8} < t < \frac{3}{4})\\
				\text{C} & (t = \frac{3}{4})\\
				\text{A-2} &( \frac{3}{4} < t < \frac{7}{8})\\
				\text{B} & (\frac{7}{8} \leq t <1)\\
			\end{cases}
\]
\end{example}
%
First, we note that there is a forbidden combination of types. In what follows, $A$ is an open set with $\partial A$ being a Jordan curve. For a parametrization $c$ of $\partial A$, we say that an interval $I \subset [0,1]$ \emph{comprises} a type $\alpha$ if each point $c(t)$, where $t\in I$, has type $\alpha$.  

\begin{theorem}
The combination of types B and C does not occur in any type sequence.
\end{theorem}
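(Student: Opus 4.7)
The plan is to establish a strong separation in $\partial A$ between the set of type-B points and the set of type-C points: neither contains a limit point of the other. This separation is enough to rule out any type sequence in which a $B$-interval and a $C$-interval are adjacent, or more generally any interval of parameters whose image comprises only types B and C with both types present.

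The heart of the argument is a short continuity computation. Suppose for a contradiction that $x \in \partial A$ is of type B and that there exist $x_n \to x$ in $\partial A$ with every $x_n$ of type C. First I would unpack the definitions: type B at $x$ says that $R_{\bar A}$ is undefined at $x$, so $\Phi(t, x) \notin \bar A$ for every $t > 0$; dually, type C at each $x_n$ says that $E_A$ is undefined at $x_n$, so $\Phi(t, x_n) \in A$ for every $t > 0$. Fix any $t_0 > 0$. Then $\Phi(t_0, x) \in X \setminus \bar A$, which is open, while $\Phi(t_0, x_n) \in A$ for every $n$. Continuity of $\Phi$ gives $\Phi(t_0, x_n) \to \Phi(t_0, x) \in X \setminus \bar A$, so $\Phi(t_0, x_n) \in X \setminus \bar A$ for all large $n$; but this contradicts $\Phi(t_0, x_n) \in A$, since $A \cap (X \setminus \bar A) = \emptyset$. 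The case of a type-C point being the limit of type-B points is handled by the analogous computation, interchanging the roles of $A$ and $X \setminus \bar A$.

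With this separation in hand the theorem is immediate. If some interval $I \subset [0, 1)$ gave rise under $L_c$ to a type sequence containing only B and C with both types present, then $I_B := L_c^{-1}(\text{B}) \cap I$ and $I_C := L_c^{-1}(\text{C}) \cap I$ would be two nonempty sets partitioning the connected set $I$; at least one of them would accumulate on the other, and applying the parametrization $c$ would produce a sequence in $\partial A$ violating the separation just established. In particular, a $B$-interval and a $C$-interval cannot be adjacent in any type sequence, which is the content of the theorem.

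The only delicate point—not really an obstacle—is to read off the correctly strong consequences of the definitions: type B forces the forward orbit to avoid the closure $\bar A$, not merely $A$, and type C forces it to lie in the open set $A$, not merely in $\bar A$. Once this is recorded, the contradiction rests only on the openness of $A$ and of $X \setminus \bar A$, their disjointness, and the continuity of $\Phi$; no planarity or Jordan curve hypothesis is used beyond the fact that it is what makes it meaningful to speak of intervals in the parameter domain.
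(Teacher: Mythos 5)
Your proof is correct and rests on the same continuity computation as the paper's: type B forces the forward orbit into the open set $X \setminus \bar A$, type C forces it into the open set $A$, and continuity of $\Phi(t_0,\cdot)$ makes these incompatible under passage to a limit of boundary points. The only difference is packaging — the paper takes one-sided limits at the junction point $c(q)$ of adjacent B- and C-intervals and concludes $\mathcal{O}^+(c(q)) \subset \partial A$, so that $q$ is of type A-3, whereas you prove a mutual separation of the B- and C-sets and invoke connectedness of the parameter interval; this yields the theorem equally well but without the extra information that the junction point is specifically a tangency point.
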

\begin{proof}
 The combination BC is impossible by the following argument. Let $c$ be a parametrization of $\partial A.$ If $I = (p, q)$ comprises type B, it can be shown that $\mathcal{O}^+(c(q)) \subset X \backslash A$. Similarly, if $J = (q, r)$ comprises type C, $\mathcal{O}^+(c(q)) \subset \bar A$. Therefore, if two open intervals of type B and C are juxtaposed , the common point $q$ of their closures is of type A-3.
\end{proof}


The dynamics around the junction of different types can be inferred from the combination of types.

\begin{theorem}
Let $c$ be a parametrization of $\partial A.$ If $I =(p, q)$ comprises type A-1 and $J =[q , r)$ comprises type A-2 or C, there exists $\tau > 0$ such that $\Phi([-\tau, 0) \cup (0, \tau], c(q)) \subset A$.
\end{theorem}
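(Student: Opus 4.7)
The forward half of the conclusion is immediate: since $c(q)$ is of type A-2 or C, we have $T^e_A(c(q)) > 0$ (possibly $+\infty$), so $\Phi((0, \tau], c(q)) \subset A$ for every $\tau < T^e_A(c(q))$. The backward half requires the bulk of the work, and I argue by contradiction, supposing that some sequence $s_n \to 0^+$ satisfies $\Phi(-s_n, c(q)) \notin A$.

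First I would pass to local coordinates around $c(q)$ via the Jordan--Schoenflies theorem, so that $c(q) = (0,0)$, the boundary $\partial A$ becomes a segment of the local $x$-axis, $A$ becomes the local upper half-plane, and $c$ is a local homeomorphism sending parameters slightly less (respectively greater) than $q$ onto the negative (respectively positive) $x$-axis near the origin. A short auxiliary argument would then allow me to reduce to the case where $\Phi(-s_n, c(q)) \in \{y < 0\}$ strictly: if infinitely many of the points $\Phi(-s_n, c(q))$ lay on $\partial A$, I would iterate along the orbit $\gamma := \Phi(\cdot, c(q))$ using the two-to-one property of $E_A$ (Theorem \ref{thm_two_one}) to produce a new sequence of negative times at which the orbit lies strictly in $\{y<0\}$.

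The core step would then exploit the type A-1 hypothesis on $(p,q)$. Fixing $\tau_+ > 0$ with $\Phi(\tau_+, c(q)) \in A$, by continuity of $\Phi$ and openness of $A$, one has $\Phi(\tau_+, c(t)) \in A$ for $t$ near $q$. For $t \in (p, q)$ close to $q$, the type A-1 assumption gives $\Phi((0, T^r_{\bar A}(c(t))), c(t)) \subset X \setminus \bar A$, which combined with $\Phi(\tau_+, c(t)) \in A$ forces $T^r_{\bar A}(c(t)) \leq \tau_+$; hence $T^r_{\bar A}(c(t)) \to 0$ and $R_{\bar A}(c(t)) \to c(q)$ as $t \to q^-$. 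Continuity of the orbit places $R_{\bar A}(c(t))$ in $\overline{\{y<0\}}$, hence locally on $\partial A$, so $R_{\bar A}(c(t)) = c(u(t))$ for some $u(t) \to q$. Because the orbit continues into $A$ past $R_{\bar A}(c(t))$, the point $c(u(t))$ cannot be of type A-1, so $u(t) \geq q$; and the two-to-one property of $R_{\bar A} = E_{X\setminus\bar A}$ (Lemma \ref{lem_sym} together with Theorem \ref{thm_two_one}) excludes $u(t) = q$ for more than two values of $t$, so $u(t) > q$ for $t$ sufficiently close to $q^-$.

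To close the argument, I would form the Jordan curve $\Gamma_t := \Phi([0, T^r_{\bar A}(c(t))], c(t)) \cup c([t, u(t)])$ consisting of an orbit arc in $\overline{\{y<0\}}$ and the boundary arc passing through $c(q)$, and examine the location of $\gamma(-s_n) \in \{y<0\}$ relative to the bounded region enclosed by $\Gamma_t$. Since $\gamma$ and the orbit of $c(t)$ are distinct orbits and hence disjoint, $\gamma$ cannot meet the orbit-arc portion of $\Gamma_t$; combined with the facts that $\gamma$ passes through $c(q) \in \Gamma_t$, that $\gamma(\tau) \in A$ for $\tau \in (0, \tau_+]$, and that the enclosed region shrinks to $c(q)$ as $t \to q^-$, this will force a topological contradiction with $\gamma(-s_n) \in \{y<0\}$ for an appropriately chosen $t$. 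I expect this final Jordan-curve accounting to be the main obstacle; the preliminary reduction to the strict-exterior case and the analysis of $R_{\bar A}(c(t))$ via the two-to-one bound are the supporting technical pieces.
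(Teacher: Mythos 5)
Your overall architecture tracks the paper's: the forward half is immediate, and the backward half is attacked by showing that return times of nearby type A-1 points tend to $0$, that their return points land in $c(J)$ just past $q$, and then running a Jordan-curve argument with the exterior orbit arc of such a point. But there are two genuine gaps before the final step. First, your reduction to the strictly exterior case does not work: if the backward orbit of $c(q)$ touches $\partial A$ at times $-s_n \to 0^-$ at points $c(u_n)$ with $u_n \in (q,r)$ of type A-2, dipping back into $A$ between touches, then the conclusion fails even though the orbit never enters $\{y<0\}$, and the two-to-one property of $E_A$ gives no way to manufacture strictly exterior points from such grazings. This graze-from-inside case needs its own argument (it is what the paper's closing sentence, invoking the absence of type A-3 points near $c(q)$, is meant to dispose of), not a reduction. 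Second, your claim that $c(u(t)) = R_{\bar A}(c(t))$ cannot be of type A-1 ``because the orbit continues into $A$ past $R_{\bar A}(c(t))$'' is unjustified: the definition of $R_{\bar A}$ only guarantees that the orbit reaches $\bar A$ at time $T^r_{\bar A}(c(t))$; it may immediately leave $\bar A$ again, in which case the return point satisfies $E_A = \mathrm{id}$ there and is of type A-1 or B. The paper proves the weaker but sufficient statement $R_{\bar A}(c(t)) \notin c(I)$ by exhibiting the return point as a limit of boundary points $w_n$ with $R_{\bar A}(w_n) = w_n$, hence non-A-1, and using that $c(I)$ is open in $\partial A$.

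The step you yourself flag as the main obstacle is left undone, and the mechanism you sketch is not quite the right one. You do not want the enclosed region to shrink to $c(q)$; you want, for a single fixed $t$ with $t < q < u(t)$, that the region enclosed by $\Gamma_t$ contains all of $X \setminus \bar A$ in some neighborhood of $c(q)$, which holds because $c(q)$ is an interior point of the boundary arc of $\Gamma_t$ and lies at positive distance from the orbit arc. One then argues that the backward orbit of $c(q)$ cannot enter that region: disjointness of orbits blocks the orbit-arc portion of $\Gamma_t$, but it does \emph{not} block entry through the boundary arc, so you must separately rule out a crossing of $c([t,q))$ from outside to inside (such a crossing would give those points positive first-exit time, contradicting type A-1). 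As it stands the proposal is a plausible outline of the paper's strategy with the two supporting lemmas mis-justified and the decisive topological step missing.
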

\begin{proof}
First, we show that $R_{\bar A}(c(x)) \not \in c \qty(I)$ for all $x \in I$. To obtain a contradiction, we assume $R_{\bar A}(c(x)) = c(s)$ with $s \in I$. By the definition of the first-in map and the assumption of A-1, each sufficiently small $\tau>0$ has a corresponding neighborhood $V_{\tau}$ of $c(s)$ such that
\[
	\Phi(-\tau, V_{\tau}) \subset X \backslash \bar A.
\]
We now consider a sequence $\tau_n$ with $\tau_n \to 0$ as $n \to \infty$. Then, we may define sequences $\{z_n\}$ with $z_n \in V_{\tau_n} \cap A$ and $\{\sigma_{n}\}$ such that 
\[
		\sigma_n = \inf\{\sigma>0 \mid \Phi\qty(-\sigma, z_n) \not \in A\}.
\]
Note that $\sigma_n >0$, because $A$ is open and $z_n \in A$. Since $\Phi\qty(-\tau_n, z_n) \not \in A,$ $\sigma_n\leq \tau_n$. Therefore, $\sigma_n \to 0$ as $n \to \infty$. Moreover, we have 
\[
\begin{aligned}
	\Phi\qty(-\sigma_n, z_n) &\in \partial A\\
	\lim_{n \to \infty} \Phi\qty(-\sigma_n, z_n) &= c(s)
\end{aligned}
\]
by the continuity of $\Phi$. Further,  $\Phi\qty(-\sigma_n, z_n)$ is not type A-1, since we have $R_{\bar A}\qty( \Phi\qty(-\sigma_n, z_n)) = \Phi\qty(-\sigma_n, z_n)$ by the choice of $\sigma_n$. Therefore, $c(s)$ is in the closure of $\partial A \backslash c \qty(I)$ in $\partial A$, which contradicts the assumption $c(s) \in c \qty(I)$.

Next, we show that for all $\delta >0$ and an interval $U = (a, q)$, we have 
\[
T^r_{\bar A}\qty(c(x))\leq \delta\\
\]
for some $x \in U$. 
We assume that there exists $\delta >0$ and an interval $U = (a, q)$ such that 
\[
T^r_{\bar A}\qty(c(x)) > \delta\\
\]
for all $x \in U$. 

 Then, by the continuity of $\Phi,$ we have
\[
	\Phi(t, c(q)) = \lim_{n \to \infty} \Phi\left(t, c\left(q-\frac{1}{n}\right)\right) \in \overline{X \backslash \bar A} \subset X \backslash A,
\]
for all $t \in (0, \delta)$. Therefore, $q$ cannot be of type A-2 or C.

Hence, we may construct sequences $x_n$ and $t_n>0$ such that $x_n \to q,$ $t_n \to 0$ and
\[
	R_{\bar A}(c(x_n))= \Phi(t_n,c(x_n)) \in \partial A.
\]
We find $s_n \in (0, t_n)$ with $ \Phi(s_n,c(x_n)) \not\in  A$.

Let $W$ be a sufficiently small neighborhood of $c(q)$ such that $W \cap \partial A \subset c([p,r])$. By the continuity of $\Phi$, there exists $\tau_0 >0$ and a neighborhood $W'$ of $c(q)$ such that
\[
	\Phi\qty((-\tau_0,\tau_0)\times W') \subset W.
\]
Therefore, for sufficiently large $n$, we have $R_{\bar A}(c(x_n))= \Phi(t_n,c(x_n)) \in W \cap \partial A$. Moreover, since $R_{\bar A}(c(x_n)) \not \in c(I)$, we have
\[
	R_{\bar A}(c(x_n)) \in c(J).
\]
By considering the orbits, it follows that $R_{\bar A}(c[x_n,q)) \subset c(J)$. Let $W''$ be the open set encircled by the forward orbit of $c(x_n)$ and $\partial A$.

We observe that $\mathcal{O}^{-} (c(q)) \cap W'' = \emptyset$ because otherwise, we have an intersection of orbits. Let us consider another sufficiently small neighborhood $W'''$ of $c(q)$ such that $W''' \backslash \bar A \subset W''.$ For each sufficiently small $\tau' >0$, we have $\Phi\qty(-\tau', c(q))\in W'''$ by continuity, and consequently, $\Phi\qty(-\tau', c(q))\in \bar A$. Since there is no point of type A-3 around $c(q)$, it follows that $\Phi( [-\tau, 0), c(q)) \subset A$ for some $\tau >0.$
\end{proof}

We remark that type sequences define a topological invariant. Namely, if two flows are topologically equivalent, then the type sequence is the same for two open sets that correspond under the homeomorphism of topological equivalence. Therefore, two flows cannot be topologically equivalent if there is a combination of types that appears only for one of the two.
\begin{example}[Sink and source are not topologically equivalent]
Let us consider two flows defined by
\[
	\Phi(t,x,y) = (xe^{t}, ye^{t})
\]
and 
 \[
	\Psi(t,x,y) = (xe^{-t}, ye^{-t}).
\]
For $\Phi$, the boundary of the unit disc comprises type B. If $\Phi$ and $\Psi$ are topologically equivalent, then the interior of the unit disk would be mapped to a bounded open set with its boundary being type B. However, for $\Psi$, open sets are unbounded if the boundary comprises type B. This is a consequence of Theorem \ref{thm_bkwd}.
Therefore, $\Phi$ and $\Psi$ cannot be topologically equivalent.
\end{example}

\subsection{Parametric representation of the first-out map}

Let $A$ be an open set with $\partial A$ being a Jordan curve.
If $c$ is a parametrization of $\partial A$, we may define a partial map $F_E: [0,1) \to [0,1)$ by setting \[c(F_E(s)) = E_A(c(s)).\] 
The partial map $F_E$ encodes information of $E_A$. While $F_E$ does not reflect the full information on the type of a boundary point, it is easier to analyze as it is a one-dimensional partial map.
\begin{remark}
$F_E$ is defined at $s$ if and only if $E_A$ is defined at $c(s)$. Further, if $F_E$ is continuous at $s$, $E_A$ is continuous at $c(s)$. The converse is true for $s \in (0,1)$. Moreover, note that $F_E(s) \neq s$ if and only if $c(s)$ is of type A-2.
\end{remark}
Another restriction can be described in terms of $F_E$. Since the orbits of a flow are disjoint, $F_E$ satisfies a monotonicity condition. First, we consider this in terms of the first-out map.

\begin{lemma}\label{lem_mono}
Let $\Phi: \mathbb{R}\times \mathbb{R}^2 \to \mathbb{R}^2$ be a continuous flow, $A \subset \mathbb{R}^2$ be an open subset, with $\partial A$ being a Jordan curve, and $c:[0,1)$ a parametrization of $\partial A$ and $E_A(c(t_0)) = c(t_1)$ with $t_1 > t_0.$ If $E_A(c(s_0)) = c(s_1)$ and $s_0 \in (t_0, t_1)$. Then, we have $s_1 \in (t_0, t_1)$. 
\end{lemma}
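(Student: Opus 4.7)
The plan is a planar Jordan curve argument. Set $T_0 := T^e_A(c(t_0))$ and $T_1 := T^e_A(c(s_0))$, and let $\gamma := \Phi([0,T_0],c(t_0))$ and $\gamma' := \Phi([0,T_1],c(s_0))$ be the associated orbit arcs; by the definition of the first-out map together with openness of $A$, $\gamma\setminus\{c(t_0),c(t_1)\}\subset A$ and analogously for $\gamma'$. If $T_1 = 0$ then $c(s_1)=c(s_0)$, so $s_1=s_0\in(t_0,t_1)$ and there is nothing to prove; hence I assume $T_1>0$.

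Next I would form the simple closed curve $J := \gamma\cup c([t_0,t_1])$, which by injectivity of $c$ and the inclusion $\gamma^\circ\subset A$ self-intersects only at $c(t_0),c(t_1)$. By the Jordan curve theorem, $\mathbb{R}^2\setminus J$ has two components; the complementary boundary arc $c([t_1,1))\cup c([0,t_0])$ of $\partial A$ is connected and meets $J$ only at $c(t_0),c(t_1)$, so it lies in exactly one of them, and I denote the other component by $U$. Then $U\cap\partial A=\emptyset$, so $U$ is contained in either $A$ or $\mathbb{R}^2\setminus\bar A$ by connectedness. Testing against a neighborhood of an interior point of $\gamma$ (which lies in the open set $A$ and meets $U$ since that point is in $\partial U$) forces $U\subset A$; and because $A$ sits on one local side of $c((t_0,t_1))$, $U$ is precisely that side locally. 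In particular $\Phi((0,\varepsilon),c(s_0))\subset U$ for small $\varepsilon>0$. Since two orbits of a flow are either identical or disjoint as sets, $\gamma'$ cannot cross the interior of $\gamma$, and so $\gamma'$ stays in $U$ until its first return to $\partial U = J$; that return must occur through the arc $c([t_0,t_1])$, giving $s_1\in[t_0,t_1]$. The endpoint cases $s_1 \in \{t_0,t_1\}$ would identify $c(s_1)$ with an endpoint of $\gamma$, placing $c(s_0)$ and $c(t_0)$ on a common orbit in a position that conflicts with $s_0\in(t_0,t_1)$, so in fact $s_1\in(t_0,t_1)$.

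The main obstacle will be the degenerate subcase in which $c(s_0)$ already lies on the orbit of $c(t_0)$, which is the only way $\gamma$ and $\gamma'$ can share points. I would handle it by writing $c(s_0)=\Phi(\rho,c(t_0))$ and noting that $\rho\in(0,T_0)$ is impossible (it would place $c(s_0)$ in $\gamma^\circ\subset A$, contradicting $c(s_0)\in\partial A$) while $\rho\in\{0,T_0\}$ forces $s_0\in\{t_0,t_1\}$. Consequently $\rho\notin[0,T_0]$, the parameter intervals of $\gamma$ and $\gamma'$ on the common orbit become disjoint, $\gamma\cap\gamma'=\emptyset$, and the Jordan curve argument of the previous paragraph proceeds unchanged.
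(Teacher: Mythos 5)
Your overall route is the same as the paper's: form the Jordan curve $J=\gamma\cup c([t_0,t_1])$ from the exit arc of $c(t_0)$ and the boundary arc, identify the enclosed component $U\subset A$, and use disjointness of orbits to trap the forward orbit of $c(s_0)$ in $U$ until it meets $c([t_0,t_1])$. That part of the argument is sound and yields $s_1\in[t_0,t_1]$, and your exclusion of $s_1=t_1$ is correct: $E_A(c(s_0))=E_A(c(t_0))$ with both exit times positive would place $c(s_0)$ or $c(t_0)$ in $A$ (or force them to coincide), exactly as in the paper's two-to-one theorem.

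The genuine gap is the case $s_1=t_0$. You assert that it places $c(s_0)$ and $c(t_0)$ on a common orbit ``in a position that conflicts with $s_0\in(t_0,t_1)$,'' but there is no conflict: $s_1=t_0$ only says $c(s_0)=\Phi(-T^e_A(c(s_0)),c(t_0))$, so $c(s_0)$ sits on the backward orbit of $c(t_0)$ outside $\gamma$, and the forward orbit of $c(s_0)$ runs through $U$ and leaves it precisely at the corner $c(t_0)$ of $J$; nothing in the Jordan curve argument forbids this, and your later remark that the parameter intervals of $\gamma$ and $\gamma'$ ``become disjoint'' fails exactly here (they share the endpoint $c(t_0)$). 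The configuration actually occurs: take $\Phi(t,(x,y))=(x+t,y)$ and $A=\{(x,y)\mid 0<x<2,\ -1<y<|x-1|\}$, whose boundary is a Jordan curve with an inward spike touching the orbit $y=0$ at $(1,0)$. Then $E_A(0,0)=(1,0)$ and $E_A(1,0)=(2,0)$, and for the counterclockwise parametrization one has $c(t_0)=(1,0)$, $c(s_0)=(0,0)$, $c(t_1)=(2,0)$ with $t_0<s_0<t_1$ but $s_1=t_0\notin(t_0,t_1)$. So the strict conclusion cannot be derived; only $s_1\in[t_0,t_1]$ follows. For what it is worth, the paper's own proof hides the same lacuna: it claims a crossing time $t'$ of $\partial A_1$ strictly inside $(0,T^e_A(c(s_0)))$, which Lemma \ref{lem_IVT} does not supply when the orbit exits $A_1$ only at the corner $c(t_0)$. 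Repairing the statement requires either weakening the conclusion to the closed interval or adding a hypothesis excluding an interior tangency of the orbit of $c(t_0)$ with $\partial A$ at $c(t_0)$ itself.
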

\begin{proof}

Let $\gamma_1$ be a Jordan curve defined by 
\[
	\gamma_1(t) = \begin{cases}
				c(2 (t_1-t_0)t +t_0) & (t \in [0, 1/2))\\
				\Phi\qty(2T^e_{A}\qty(c(t_0))(1-t),c(t_0)) & (t \in [1/2,1))
			\end{cases}
\] 
and $A_1$ be the interior of the domain encircled by $\gamma_1$. Then, $A\backslash A_1$ is also encircled by a Jordan curve and $c(s_0) \not \in \overline{A\backslash A_1}$.

We assume $s_1 \not \in (t_0, t_1)$. Since $\Phi(t,c(s_0)) \in A_1$ for some $t \in \qty(0,T^e_{A}\qty(c(s_0)))$, there exists $t' \in (0,T^e_{A}\qty(c(s_0)))$, such that $\Phi(t',c(s_0)) \in \partial A_1$ by Lemma \ref{lem_IVT}. Since $\Phi(t',c(s_0)) \in A$, $z:=\Phi(t',c(s_0))=\Phi(t'', c(t_0))$ for some $t'' \in (0,T^e_{A}\qty(c(t_0)))$. This contradicts $s_0 \neq t_0$, as $\inf\{\tau >0 \mid \Phi(-\tau, z) \not \in A\} = t' = t''$.
\end{proof}
Then, this result can be restated in terms of parametric representation.
\begin{lemma}\label{lem_mono_F}
If $F_E(s) = t$ with $s<t$, $F_E(s') \in (s, t)$ whenever $s' \in (s,t)$. Similarly, if $F_E(s) = t$ with $s>t$, $F_E(s') \in (t, s)$ whenever $s' \in (t, s)$.
\end{lemma}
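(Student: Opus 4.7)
The statement is a direct parametric reformulation of Lemma \ref{lem_mono}, so my plan is to translate the hypothesis and conclusion through the defining relation $c(F_E(s)) = E_A(c(s))$ and quote Lemma \ref{lem_mono}. The only subtlety is that Lemma \ref{lem_mono} is stated with the convention $t_1 > t_0$, so the two halves of Lemma \ref{lem_mono_F} are handled slightly differently.

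For the first half, suppose $F_E(s) = t$ with $s < t$, and fix $s' \in (s,t)$. I set $t_0 := s$, $t_1 := t$, $s_0 := s'$, and $s_1 := F_E(s')$. The relation $c(F_E(s)) = E_A(c(s))$ gives $E_A(c(t_0)) = c(t_1)$ with $t_1 > t_0$, and $E_A(c(s_0)) = c(s_1)$ with $s_0 \in (t_0, t_1)$. Lemma \ref{lem_mono} then yields $s_1 \in (t_0, t_1)$, which is exactly $F_E(s') \in (s,t)$.

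For the second half, the hypothesis $s > t$ prevents a direct application of Lemma \ref{lem_mono}. The cleanest way around this is to reverse the orientation of the parametrization. Define $\tilde c : [0,1) \to \partial A$ by $\tilde c(u) := c((1-u) \bmod 1)$ and let $\tilde F_E$ be the corresponding parametric first-out map, so that $\tilde F_E(u) = (1 - F_E((1-u) \bmod 1)) \bmod 1$. Under this reparametrization, $F_E(s) = t$ with $s > t$ translates to $\tilde F_E(1-s) = 1-t$ with $1-s < 1-t$, and $s' \in (t,s)$ translates to $1-s' \in (1-s, 1-t)$. Applying the first half (equivalently, Lemma \ref{lem_mono} for $\tilde c$) gives $\tilde F_E(1-s') \in (1-s, 1-t)$, i.e., $F_E(s') \in (t,s)$.

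The main (and only) obstacle is bookkeeping: Lemma \ref{lem_mono} is orientation-asymmetric as stated, so I must be careful that a parametrization reversal is legitimate. Since the lemma uses the parametrization only to index points of the Jordan curve $\partial A$ and to compare positions along it, the conclusion is invariant under choice of orientation; this justifies the reduction of the second half to the first.
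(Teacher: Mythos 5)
Your proposal is correct and matches the paper's proof essentially exactly: the first statement is quoted directly from Lemma \ref{lem_mono}, and the second is reduced to the first by reversing the parametrization (the paper uses $\tilde c(t) = c(1-t)$ for $t \in (0,1)$ with $\tilde c(0) = c(0)$, which is the same device as your $(1-u) \bmod 1$).
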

\begin{proof}
The first statement is a direct consequence of Lemma \ref{lem_mono}. The second statement can be obtained from the consideration of another parametrization
\[
	\tilde c(t) := \begin{cases}
					c(0) & (t=0)\\
					c(1-t) &(0<t<1).
				\end{cases}
\]
\end{proof}

This is a substantial restriction, as we can observe in the following theorem, which is a generalization of the classical result on the monotonicity of one-dimensional first-in maps.
\begin{theorem}[Main Theorem B]\label{thm_mono}
If $F_E$ is continuous at $s$ and $F_E(s)\neq s$, there exists $\delta>0$ such that $F_E(t) > F_E(s)$ whenever $s-\delta < t <s$ and $F_E(t) < F_E(s)$ whenever $s<t<s+\delta$.
\end{theorem}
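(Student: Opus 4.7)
The plan is to combine Lemma \ref{lem_mono_F} with the continuity hypothesis, handling the two sides of $s$ asymmetrically. Write $u := F_E(s)$ and first consider the case $u > s$; the case $u < s$ will follow by a parallel argument. Using the continuity of $F_E$ at $s$, I will choose $\delta > 0$ so small that $F_E$ is defined on $(s-\delta, s+\delta)$, that $s + \delta < u$, and that $|F_E(t) - u| < (u-s)/2$ for every $t$ in that interval.

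For the right side $s < t < s + \delta$, the choice of $\delta$ places $t$ inside the open interval $(s, u)$, so Lemma \ref{lem_mono_F} applied directly to the pair $(s, u)$ forces $F_E(t) \in (s, u)$, and in particular $F_E(t) < u = F_E(s)$, as required. For the left side $s - \delta < t < s$, a direct application to $(s, u)$ does not apply, so I will instead invoke Lemma \ref{lem_mono_F} on a different pair. Setting $v := F_E(t)$, the continuity estimate gives $v > (u+s)/2 > s > t$, hence $t < v$. Applying the lemma to the pair $(t, v)$ yields $F_E(s') \in (t, v)$ for every $s' \in (t, v)$, and since $t < s < v$ this forces $u = F_E(s) \in (t, v)$, so that $v > u$, as required.

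The case $u < s$ will be handled symmetrically, swapping the roles of the two sides: the left side then admits a direct application of Lemma \ref{lem_mono_F} on the pair $(u, s)$, while the right side requires the auxiliary pair $(F_E(t), t)$, whose correct ordering $F_E(t) < t$ is again supplied by continuity together with $u < s$. The only real subtlety---and thus the step worth being careful about---is recognizing that one side of $s$ admits a direct application of Lemma \ref{lem_mono_F} to the pair $(s, u)$, while the other side requires applying the lemma to the pair $(t, F_E(t))$; continuity is used precisely to guarantee the strict inequality between $t$ and $F_E(t)$ that legitimizes this second application. Beyond bookkeeping the two cases, I do not anticipate any substantial obstacle.
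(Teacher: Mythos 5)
Your proof is correct and follows essentially the same route as the paper's: a direct application of Lemma \ref{lem_mono_F} to the pair $(s, F_E(s))$ on one side of $s$, and an application to the pair $(t, F_E(t))$ on the other side, with the continuity hypothesis used precisely to guarantee that $s$ lies strictly between $t$ and $F_E(t)$. The only cosmetic difference is that continuity of a partial map at $s$ does not by itself ensure that $F_E$ is defined on a full neighbourhood of $s$, so the conclusion should be stated for $t \in (s-\delta, s+\delta) \cap \mathrm{dom}\, F_E$, exactly as the paper does; this does not affect the argument.
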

\begin{proof}
Let us first consider the case where $F_E(s) > s$. By the continuity of $F_E$ at $s$, there exists $\delta > 0$ such that $ \left| F_E(t) - F_E(s)\right|< \epsilon = F_E(s) -s$ if $t \in (s-\delta, s+\delta) \cap \mathrm{dom} \, F_E$. Without loss of generality, we may also assume $\delta <\epsilon$ and $F_E(t) > t$ for all $t \in (s-\delta, s+\delta) \cap \mathrm{dom} \, F_E$. 

If $t \in (s-\delta, s)$, we have $F_E(s) - F_E(t) < \epsilon = F_E(s) -s$, and therefore, $ s \in (t, F_E(t))$. By Lemma \ref{lem_mono_F}, we have $F_E(s) < F_E(t)$.

If $t \in (s, s+\delta)$, we have $t < s + \delta < s + \epsilon = F_E(s)$. Therefore, $t\in (s, F_E(s))$, which implies $F_E(t) < F_E(s)$. 

The proof for $F_E(s) < s$ is similar.
\end{proof}
\begin{corollary}
If $F_E$ is defined and monotonically increases on an interval $I = (a,b)$, then $F_E$ is the identity on $I$, except at most countable points.
\end{corollary}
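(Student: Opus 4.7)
The plan is to deduce the corollary directly from Main Theorem B (Theorem \ref{thm_mono}) together with the classical fact that a monotone real-valued function on an interval has at most countably many points of discontinuity.

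First I would invoke that classical fact: since $F_E$ is defined and monotonically increasing on the open interval $I = (a,b)$, the set $D \subset I$ of discontinuities of $F_E$ is at most countable. This reduces the problem to showing that $F_E(s) = s$ for every continuity point $s \in I \setminus D$.

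Next, fix such a continuity point $s$ and argue by contradiction. Suppose $F_E(s) \neq s$. Theorem \ref{thm_mono} then supplies a $\delta > 0$, which I may shrink so that $(s-\delta, s+\delta) \subset I$, with the property that $F_E(t) > F_E(s)$ for every $t \in (s-\delta, s)$. But monotonic increase of $F_E$ on $I$ forces $F_E(t) \leq F_E(s)$ for every $t < s$ in $I$, a contradiction. Hence $F_E(s) = s$ on all of $I \setminus D$, and since $D$ is countable the corollary follows.

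I do not anticipate any serious obstacle: the corollary is essentially a direct repackaging of Main Theorem B, and the only external ingredient is the standard discontinuity theorem for monotone functions, which applies verbatim. The only minor care required is to ensure the interval $(s-\delta, s+\delta)$ provided by Theorem \ref{thm_mono} is shrunk to lie inside $I$, so that the monotonicity hypothesis is available at the comparison points $t \in (s-\delta, s)$.
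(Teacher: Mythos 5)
Your proposal is correct and follows essentially the same route as the paper: reduce to continuity points via the countability of discontinuities of a monotone function, then apply Theorem \ref{thm_mono} there. You merely spell out the contradiction with monotonic increase that the paper leaves implicit, which is a worthwhile clarification but not a different argument.
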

\begin{proof}
By monotonicity, $F_E$ is continuous on $I$, except at most countable points. By applying Theorem \ref{thm_mono} to continuous points, we obtain this result.
\end{proof}
\begin{corollary}\label{cor_min}
If $F_E$ takes a local maximum (minimum) at $s\neq 0$, then $F_E$ is either discontinuous at $s$ or $F_E(s) =s$.
\end{corollary}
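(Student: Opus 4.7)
The plan is to prove the corollary by contrapositive, applying Theorem \ref{thm_mono} directly. That is, I would assume $F_E$ is continuous at $s$ with $F_E(s) \neq s$, and show that $s$ can be neither a local maximum nor a local minimum. This turns the corollary into an essentially immediate consequence of the theorem, so I do not expect any real obstacle—the work has already been done in establishing the monotonicity statement.

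Concretely, I would first invoke Theorem \ref{thm_mono} to obtain $\delta > 0$ such that $F_E(t) > F_E(s)$ for $t \in (s-\delta, s) \cap \mathrm{dom}\, F_E$ and $F_E(t) < F_E(s)$ for $t \in (s, s+\delta) \cap \mathrm{dom}\, F_E$. The restriction $s \neq 0$ is precisely what ensures that the left interval $(s-\delta, s)$ lies inside $[0,1)$, so that the theorem's conclusion on both sides of $s$ is meaningful. The first inequality then contradicts $s$ being a local maximum (one can find points immediately to the left of $s$ where $F_E$ strictly exceeds $F_E(s)$), while the second inequality contradicts $s$ being a local minimum (points immediately to the right give strictly smaller values). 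Hence $s$ is neither, which is the contrapositive of the desired statement.

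The only subtlety worth flagging is the implicit assumption that $s$ is not isolated in $\mathrm{dom}\, F_E$ from the relevant side—otherwise "local maximum/minimum" would be vacuous; but a local extremum at an isolated domain point would trivially satisfy the conclusion $F_E(s) = s$ being false in a trivial way, and in any case the hypothesis of continuity at $s$ combined with the conclusion of Theorem \ref{thm_mono} forces the presence of witnessing domain points on both sides. So no genuine obstacle arises; the corollary is essentially the contrapositive repackaging of the theorem.
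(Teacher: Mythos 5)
Your proposal is correct and is essentially the paper's own argument: the corollary is read off as the contrapositive of Theorem \ref{thm_mono}, whose two one-sided inequalities rule out a local maximum on the left and a local minimum on the right. The paper's proof is a one-line version of exactly this, so there is nothing further to add.
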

\begin{proof}
If $F_E$ is locally maximal (minimal) at $s$ and continuous at $s$, Theorem \ref{thm_mono} implies that $F_E(s) =s$.
\end{proof}

\begin{corollary}\label{cor_con}
If $F_E$ is defined and continuous on $[0,1)$, then the number of minimum or maximum of $F_E$ is at most one for each. 
\end{corollary}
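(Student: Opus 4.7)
The plan is to argue by contradiction: suppose $F_E$ admits two distinct local maxima $s_1 < s_2$ both lying in $(0,1)$. Since $F_E$ is continuous at both points, Corollary~\ref{cor_min} forces $F_E(s_1)=s_1$ and $F_E(s_2)=s_2$. The crux is to upgrade this to the assertion that $F_E$ coincides with the identity on the entire closed interval $[s_1,s_2]$, after which the contradiction is immediate: for any $t \in (s_1,s_2]$ we would have $F_E(t)=t>s_1=F_E(s_1)$, so no right-neighborhood of $s_1$ can be confined below $F_E(s_1)$, contradicting the local maximality of $s_1$.

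To establish the intermediate claim, I would study the set $U := \{t \in [s_1,s_2] : F_E(t) \neq t\}$, which is open in $[s_1,s_2]$ by continuity of $F_E$. Assuming $U$ is nonempty, pick any connected component $(a,b)$; necessarily $s_1 \leq a < b \leq s_2$, and since $(a,b)$ is a maximal subinterval of non-fixed points, both endpoints must themselves be fixed, giving $F_E(a)=a$ and $F_E(b)=b$. Theorem~\ref{thm_mono} applied pointwise on $(a,b)$ shows that $F_E$ is locally strictly decreasing at every point there, and combining this with continuity and the connectedness of $(a,b)$ promotes it to strict decrease on the whole interval. Continuity then gives $\lim_{t \to a^+}F_E(t)=a$ and $\lim_{t \to b^-}F_E(t)=b$, while strict decrease on $(a,b)$ forces the former limit to strictly exceed the latter, yielding $a>b$ and contradicting $a<b$. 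Hence $U=\emptyset$ and $F_E\equiv\mathrm{id}$ on $[s_1,s_2]$, which closes the argument for maxima.

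The local minimum case is symmetric: if $s_1<s_2$ are both local minima in $(0,1)$, the same reduction forces $F_E$ to be the identity on $[s_1,s_2]$, and then $F_E(t)=t<s_2=F_E(s_2)$ for $t \in [s_1,s_2)$ contradicts the local minimality of $s_2$. I expect the main technical step to be the passage from the \emph{pointwise} local strict monotonicity supplied by Theorem~\ref{thm_mono} to \emph{global} strict monotonicity on each connected component of non-fixed points; once this global decrease principle is in hand, the collapse to the identity on $[s_1,s_2]$ and the resulting contradiction are essentially bookkeeping on endpoint values.
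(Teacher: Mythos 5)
Your proof is correct, but it is organized quite differently from the paper's. The paper argues globally and constructively: it shows that $F_E(t)<t$ forces $F_E$ to decrease on all of $[t,1)$ (and dually $F_E(t)>t$ forces decrease on $[0,t]$), then introduces $\alpha=\sup\{t\mid F_E(t)>t\}$ and $\beta=\inf\{t\mid F_E(t)<t\}$ to exhibit $F_E$ as ``decreasing on $[0,\alpha)$, identity on $[\alpha,\beta]$, decreasing on $(\beta,1)$,'' and reads the extremum count off this shape (which is then reused in Remark \ref{rem_uni} to get unimodality). You instead argue by contradiction from two extrema of the same kind, use Corollary \ref{cor_min} to pin both down as fixed points, and prove a rigidity lemma --- any two fixed points of $F_E$ force $F_E$ to be the identity on the interval between them --- by analyzing a connected component $(a,b)$ of the non-fixed set and deriving $a>b$. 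This lemma is a nice reformulation and is genuinely correct; your ``main technical step,'' promoting the pointwise separation property of Theorem \ref{thm_mono} to strict decrease on $(a,b)$, goes through by exactly the device the paper itself uses in its proof: the minimum of $F_E$ on a compact subinterval is attained, and Theorem \ref{thm_mono} rules out the left endpoint and any interior point as the minimizer, so the right endpoint has the strictly smallest value. Two remarks: your restriction to extrema in $(0,1)$ is forced (Corollary \ref{cor_min} excludes $s=0$), and is the right reading of the statement --- on the literal half-open interval $[0,1)$ the point $0$ can be an extra local maximum, a boundary artifact of parametrizing the circle that the paper's own proof also tacitly ignores. What you lose relative to the paper is the explicit global down--identity--down decomposition, which is what the subsequent remark on unimodal normalization actually needs; what you gain is a self-contained local argument and the reusable fact that the fixed-point set of a continuous $F_E$ is an interval.
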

\begin{proof}
First, we show that if there exists $t \in [0,1)$ with $F_E(t) < t$, then $F_E(s) < F_E(t)$ for all $s \in (t, 1)$. We assume that there exists $s \in (t,1)$ with $F_E(s) \geq F_E(t)$. By applying Theorem \ref{thm_mono} at $t$, we see that $F_E$ takes a minimum on $[t,s]$ at some $t' \in (t,s)$. By Corollary \ref{cor_min}, $F_E(t') = t'$. Thus, we obtain
\[
	t < t' = F_E(t') \leq F_E(t),
\]
which contradicts the assumption of $t$. 

By a similar argument, we have $F_E(s') < F_E(s)$ for all $s \in (t, 1)$ and $s' \in (s, 1)$ if $F_E(t) < t$. Therefore, $F_E$ decreases monotonically on $[t,1)$ if $F_E(t) < t$. Similarly, $F_E$ decreases monotonically on $[0,t]$ if $F_E(t) > t$.

Thus, if we set 
\[
	\begin{aligned}
		\alpha& := \sup\{t\in [0,1) \mid F_E(t) > t\},\\
 		\beta&:=\inf\{t\in [0,1) \mid F_E(t) < t\},
	\end{aligned}
\]
then $F_E$ monotonically decreases on $[0, \alpha)$ and $(\beta, 1)$. Here we note that $F_E(s) > s$ for all $s \in [0, \alpha)$ by monotonicity. Thus we have $\alpha \leq \beta$. As $F_E$ is identity on $[\alpha, \beta]$, it monotonically increases on $[\alpha, \beta]$.
\end{proof}
\begin{remark}\label{rem_uni}
If $F_E$ is continuous on $[0,1)$ and not equal to the identity, $F_E$ can be modified to be unimodal. Let $\alpha$ and $\beta$ be as in Corollary \ref{cor_con}. We define another parametrization $\tilde c$ by
\[
	\tilde c (t) := c(t + \alpha).
\]
Then, we may define a unimodal function by
\[
	\tilde F_E(t) = \begin{cases}
						F_E(t+\alpha) -\alpha & (t\in [0, 1-\alpha))\\
						F_E(t+\alpha-1)-\alpha & (t\in [1-\alpha, 1)).
					\end{cases}
\]
\end{remark}

\subsection{Realization of first-out maps}

We now consider a converse question: given a partial map on the boundary, can we find a flow such that the first-out map coincides with it? As we will see later in Section 5, the answer to this question is relevant to the analysis of hybrid systems, where the dynamics are described using both flows and maps. 

This problem is generally solvable for globally-defined continuous first-out maps.
\begin{theorem}[Main Theorem C]\label{thm_conti}
Let $P: \mathbb{R} \to \mathbb{R}$ be a continuous map such that $P(-\infty, 0] = [0, \infty)$, $P(0) =0$, $P$ is two-to-one, except at $0$ and identity on $[0, \infty)$. Then, $(P, 0) = E_{H^{-}}^\Phi$ for some flow $\Phi,$ where $H^{-} :=\{(x,y) \mid y <0\}$.
\end{theorem}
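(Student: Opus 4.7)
The plan is to take $\Phi$ to be a conjugate of the standard rotation flow $\Phi_0(t,(x,y)) = (x\cos t - y\sin t,\, x\sin t + y\cos t)$ by a homeomorphism that reparametrizes only the negative $x$-axis. For $\Phi_0$, the orbit through $(-u,0)$ with $u>0$ traces the lower semicircle and lands on the positive $x$-axis at $(u,0)$ at time $\pi$, while any orbit starting on the positive $x$-axis leaves $\overline{H^-}$ through the upper half-plane immediately; hence $E^{\Phi_0}_{H^-}$ is the identity on $\{x\geq 0\}$ and the involution $(-u,0)\mapsto(u,0)$ on $\{x<0\}$. Replacing the involution by $P$ amounts to warping the $x$-axis by a homeomorphism that fixes $[0,\infty)$ pointwise and sends each $-u<0$ to $g(u)$, where $g$ is the negative branch of $P^{-1}$.

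The first step is to extract $g$ from the hypotheses. For every $y>0$ the fibre $P^{-1}(y)$ has exactly two elements, one of which must be $y$ itself by $P|_{[0,\infty)}=\mathrm{id}$, so the other is a uniquely determined point in $(-\infty,0)$; combined with $P(0)=0$ this makes $P|_{(-\infty,0]}\colon(-\infty,0]\to[0,\infty)$ a continuous bijection, hence a decreasing homeomorphism. Let $g$ be its inverse, and define $\phi\colon\mathbb{R}\to\mathbb{R}$ by $\phi(x)=x$ for $x\geq 0$ and $\phi(x)=g(-x)$ for $x<0$. Continuity at $0$ and strict monotonicity on each half (noting that $x\mapsto g(-x)$ is increasing on $(-\infty,0)$ because $g$ is decreasing) show that $\phi$ is an orientation-preserving homeomorphism of $\mathbb{R}$; then $H(x,y):=(\phi(x),y)$ is a homeomorphism of $\mathbb{R}^2$ that preserves the $x$-axis and each open half-plane. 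Lemma \ref{lem_mkflow} then supplies a flow $\Phi$ with $\Phi(t,H(p))=H(\Phi_0(t,p))$.

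The verification of $E^\Phi_{H^-}(x,0)=(P(x),0)$ splits into three cases. For $x=0$ the origin is an equilibrium of $\Phi_0$, hence of $\Phi$, and $P(0)=0$. For $x>0$ the preimage $H^{-1}(x,0)=(x,0)$ has $\Phi_0$-orbit with $y$-coordinate $x\sin t>0$ for small $t>0$, so $T^e_{H^-}(x,0)=0$ and $E^\Phi_{H^-}(x,0)=(x,0)=(P(x),0)$. For $x<0$ set $u:=P(x)>0$, so that $x=g(u)$ and $H^{-1}(x,0)=(-u,0)$; the $\Phi_0$-orbit $(-u\cos t,-u\sin t)$ lies in $H^-$ for $t\in(0,\pi)$ and returns to the $x$-axis at $(u,0)$ at time $\pi$. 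Since $H$ preserves $y$-coordinates and fixes $(u,0)$, the $\Phi$-orbit stays in $H^-$ on $(0,\pi)$ and exits at $H(u,0)=(u,0)=(P(x),0)$, yielding $E^\Phi_{H^-}(x,0)=(P(x),0)$.

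The only step that takes any care is the first one, where the hypothesis ``two-to-one except at $0$'' combined with $P|_{[0,\infty)}=\mathrm{id}$ must be turned into the statement that $P|_{(-\infty,0]}$ is a decreasing homeomorphism, which in turn makes $g$ and $\phi$ well defined; after that, the conclusion follows mechanically from Lemma \ref{lem_mkflow} and the explicit form of the rotation flow.
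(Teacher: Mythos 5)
Your proof is correct, and it follows the same overall strategy as the paper: realize $P$ as the first-out map of a flow obtained by conjugating a rigid rotation via a homeomorphism built from the negative branch of $P^{-1}$. The difference is in the conjugating homeomorphism. The paper constructs $H$ orbit-by-orbit in polar coordinates, using the radially interpolated arcs $F^{\pm}(t,x)$ with radii $R^{\mp}(t,x)$ that linearly deform from $|x|$ to $P(x)$ (resp.\ $P^{-1}(x)$) as the angle sweeps a half-turn; this forces a separate verification that $F^{\pm}$ are homeomorphisms (continuous bijection plus properness) and that the piecewise polar definition of $H$ is consistent across $\theta = 0, \pm\pi$. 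Your $H(x,y) = (\phi(x),y)$ is a global horizontal warp that is transparently a homeomorphism, fixes $[0,\infty)\times\{0\}$ pointwise, preserves each open half-plane, and sends the circle of radius $u$ to a Jordan orbit through $(g(u),0)$ and $(u,0)$; the case analysis for $x<0$, $x=0$, $x>0$ then reads off the first-out map directly. This is a genuine simplification of the construction: it eliminates the properness argument and the piecewise-polar bookkeeping at the cost of nothing, since the only data the theorem needs is the pairing $g(u)\leftrightarrow u$ on the $x$-axis, not the precise shape of the orbits in the interior of the half-planes. The one step you correctly flag as needing care, namely that the hypotheses force $P|_{(-\infty,0]}$ to be a decreasing homeomorphism onto $[0,\infty)$, is the same deduction the paper makes at the outset of its proof.
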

\begin{proof}
By the assumption of two-to-oneness, $P|_{(-\infty, 0]}:(-\infty, 0] \to [0, \infty)$ is a continuous bijection and, therefore, decreases strictly. We set up a continuous map $F^{-}: [0,1] \times (-\infty, 0)  \to \mathbb{R}^2$ by
\[
	F^-(t, x) : = \left( \begin{array}{c}
					-R^-(t,x)\cos \pi t,\\
					-R^-(t,x)\sin \pi t
				\end{array}
				\right),
\]
where $R^-(t,x):= t P(x) -(1-t) x$. Similarly, we define another continuous map $F^{+}: [0,1] \times (0, \infty)  \to \mathbb{R}^2$ by
\[
	F^+(t, x) : = \left( \begin{array}{c}
					R^+(t,x)\cos \pi t,\\
					R^+(t,x)\sin \pi t
				\end{array}
				\right),
\]
where $R^+(t,x):= (1-t)  x -t P^{-1}(x)$, and $P^{-1}$ denotes the negative branch.  

It can be shown by a direct calculation that $F^{-}: [0,1] \times (-\infty, 0)  \to \overline{H^-}\backslash \{(0,0)\}$ is a continuous bijection. Since inverse images of bounded sets are bounded, $F^{-}$ is a proper map. Therefore, $F^{-}$ is a homeomorphism. Similarly, $F^+$ is also a homeomorphism to its image. Now, we define a homeomorphism $H: \mathbb{R}^2 \to \mathbb{R}^2$ given in polar coordinates by
\[
	H(r, \theta):=\begin{cases}
				(0,0) &(r=0)\\
				F^-\qty(1+\theta/\pi, -r) &( \theta \in[-\pi, 0))\\
				F^+\qty(\theta/\pi, P(-r))&( \theta \in [0, \pi]),
				\end{cases}
\]
and a flow $\Psi$ by $\Psi(t, x,y):=\left(\cos(\pi t) x -\sin(\pi t)y, \sin(\pi t) x + \cos(\pi t) y\right)$. Note that $H$ maps the circle $r = r_0>0$ to a closed curve as
\[
	\gamma_{r_0}(\theta) := \begin{cases}
						R^{-}\qty(1+ \theta/\pi, -r_0) & (\theta \in [-\pi, 0))\\
						R^{+}\qty(\theta/\pi, P(-r_0)) & (\theta \in [0, \pi]).
					\end{cases}
\]
Then, it can be checked that $\Phi(t, x, y):= H\left(\Psi(t, H^{-1}(x,y)) \right)$ is a flow with the desired properties.
\end{proof}
\begin{corollary}\label{cor_circ}
Let $P: [0,1) \to [0,1)$ be a continuous unimodal map with $\lim_{t \to 1} P(t) =0$, $P(\alpha)=0$, and $P$ be identity on $[0, \alpha]$, where $P$ takes the maximum at $\alpha$. Then, $P = F_E$ for some flow $\Phi$ and $A = D^2$.
\end{corollary}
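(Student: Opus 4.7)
The plan is to realize $P$ as the topological conjugate of a simple reference unimodal map $R$ that is visibly realized by the upward translation flow on $\mathbb{R}^2$, and then lift the conjugation to a homeomorphism of $\mathbb{R}^2$ preserving $D^2$ and apply Lemma \ref{lem_mkflow}. One might try instead to unfold $P$ to a map on $\mathbb{R}$, apply Theorem \ref{thm_conti}, and pull back along a M\"obius transformation of $\hat{\mathbb{C}}$ as in Example \ref{ex_moe}, but this fails: the only finite equilibrium of the flow built in Theorem \ref{thm_conti} is the origin, while any M\"obius sending $H^-$ onto $D^2$ must take $0$ into $D^2$ and must have its pole inside $H^+$, so the transferred flow cannot be arranged to fix $\infty$ on $\hat{\mathbb{C}}$ and hence does not restrict to a flow on $\mathbb{R}^2$.

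Take the parametrization $c(s) = (\cos 2\pi s, \sin 2\pi s)$ of $S^1$ and let $\Phi_0(t,x,y) = (x, y+t)$ be the upward translation flow. A direct computation gives $E^{\Phi_0}_{D^2}(c(s)) = c(R(s))$, where $R(s) = s$ on $[0,1/2]$ and $R(s) = 1-s$ on $(1/2, 1)$: the vertical orbit through $c(s)$ immediately leaves $D^2$ when $c(s)$ is on the upper half of $S^1$, and otherwise dives through $D^2$ and exits at the reflection of $c(s)$ across the $x$-axis. Thus $R$ itself satisfies the hypotheses with $\alpha = 1/2$, so it suffices to construct a homeomorphism $\tilde h : [0,1) \to [0,1)$ with $P = \tilde h \circ R \circ \tilde h^{-1}$. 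Pick any increasing homeomorphism $\tilde h^{-1} : [0, \alpha] \to [0, 1/2]$ (for instance $u \mapsto u/(2\alpha)$), and extend to $[\alpha, 1)$ by the functional equation $\tilde h^{-1}(u) := 1 - \tilde h^{-1}(P(u))$, which is forced by the conjugation relation since $R$ is $s \mapsto 1-s$ on $(1/2,1)$. Because $P$ maps $[\alpha, 1)$ into $[0,\alpha]$, the right-hand side is already defined; the two definitions of $\tilde h^{-1}(\alpha)$ agree using $P(\alpha) = \alpha$, monotonicity on $[\alpha,1)$ follows from $P$ being decreasing there, and $\tilde h^{-1}(u) \to 1$ as $u \to 1^-$ follows from $P(u) \to 0$.

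Lift $\tilde h$ to a homeomorphism $H : \mathbb{R}^2 \to \mathbb{R}^2$ by angular reparametrization, $H(r \cos 2\pi s, r \sin 2\pi s) = (r \cos 2\pi \tilde h(s), r \sin 2\pi \tilde h(s))$ with $H(0,0) = (0,0)$; this preserves every circle around the origin, so in particular $H(D^2) = D^2$, and it is continuous at the origin because the radial coordinate is preserved. By Lemma \ref{lem_mkflow}, $\Phi := H \circ \Phi_0 \circ H^{-1}$ is a continuous flow on $\mathbb{R}^2$, and the equivariance of first-out maps under the disk-preserving conjugation gives
\[
E^{\Phi}_{D^2}(c(s)) = H\bigl(E^{\Phi_0}_{D^2}(H^{-1}(c(s)))\bigr) = H\bigl(c(R(\tilde h^{-1}(s)))\bigr) = c\bigl(\tilde h(R(\tilde h^{-1}(s)))\bigr) = c(P(s)),
\]
so $F_E = P$, as required. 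The delicate step is verifying that the self-referential definition of $\tilde h^{-1}$ on $[\alpha,1)$ produces a continuous monotone bijection with the correct limits at $\alpha$ and at $1$; this is precisely where the hypotheses on $P$ (identity on $[0,\alpha]$ with maximum at $\alpha$, and $\lim_{t \to 1} P(t) = 0$) enter.
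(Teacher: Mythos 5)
Your proof is correct, and it takes a genuinely different route from the paper's. The paper transports the problem to the half-plane: it lifts $P$ to a circle map $\hat P$, conjugates by a M\"obius transformation $M$ carrying $D^2$ to $H^-$ so that $Q = M\circ\hat P\circ M^{-1}$ satisfies the hypotheses of Theorem \ref{thm_conti}, builds the flow there, and pulls it back through $M$ as in Example \ref{ex_moe}. You instead realize the reference map $R$ directly by the vertical translation flow and conjugate by an angular homeomorphism of $\mathbb{R}^2$ preserving every circle about the origin; your circle homeomorphism $\tilde h$ is obtained by solving $\tilde h^{-1}\circ P = R\circ\tilde h^{-1}$, and the definition on $[\alpha,1)$ is indeed non-circular because $P$ maps that interval into $[0,\alpha]$ where $\tilde h^{-1}$ has already been fixed. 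This is more elementary (it bypasses Theorem \ref{thm_conti} entirely) and manifestly yields a global flow on $\mathbb{R}^2$. Notably, the objection you raise against the M\"obius route applies to the paper's own argument: the flow of Theorem \ref{thm_conti} is conjugate to a rotation with its unique equilibrium at the origin, so $M(\infty)=i\in H^+$ is not an equilibrium of $\Psi$, the transferred flow on $\hat{\mathbb{C}}$ does not fix $\infty$, and hence does not restrict to a planar flow without a patch (e.g.\ a time reparametrization creating an equilibrium at $i$, which leaves $E_{H^-}$ unchanged, or accepting the sphere as phase space); your construction sidesteps this entirely. Two small caveats, neither fatal: the hypothesis $P(\alpha)=0$ in the statement must be read as $P(\alpha)=\alpha$, which you use when matching the two definitions of $\tilde h^{-1}(\alpha)$; and $\tilde h^{-1}$ is injective on $[\alpha,1)$ only if $P$ is strictly decreasing there --- a strictness implicit in ``unimodal,'' equally required by the paper via the two-to-one hypothesis of Theorem \ref{thm_conti}, and in any case forced on any realizable $F_E$ by Theorem \ref{thm_mono}. (Also, a M\"obius map sending $H^-$ onto $D^2$ sends $0\in\partial H^-$ to $\partial D^2$, not into $D^2$, but this does not affect your argument, whose real point is the location of the pole in $H^+$.)
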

\begin{proof}
First, we consider the case where $\alpha = \frac{1}{2}$.
By the hypothesis, $P$ induces a map $\hat{P}: S^1 \to S^1$ by $\hat{P}(e^{2 \pi i t} )= e^{2 \pi i P(t)}$.
Let $M$ be an M\"obius transformation such that $M(-1) = 0,$ $M(1) = \infty$, mapping the open unit disc to the lower half plane. For example, $M(z) = i\frac{z+1}{z-1}$ satisfies the conditions. Then, it follows that the map $Q:\mathbb{R} \to \mathbb{R}$ defined by $Q(x) = M\left(\hat{P}(M^{-1}(x))\right)$ satisfies the hypotheses of Theorem \ref{thm_conti}. Therefore, we may construct a flow $\Psi$ such that the first-out map is $Q$. As in Example \ref{ex_moe}, we may find another flow $\Phi$ with $M\left(\Phi(t,z)\right) = \Psi(t, M(z))$. By Lemma \ref{lem_conj}, we have
\[
\begin{aligned}
	E_{D^2}(e^{2\pi i t}) &= E_{D^2}\left(M^{-1}\circ M(e^{2\pi i t}) \right) \\
					&= M^{-1}\circ Q\left(M(e^{2\pi i t})\right)\\
					&= \hat{P}(e^{2 \pi i t}) = e^{2 \pi i P(t)},
\end{aligned}
\]
for the first-out map of $\Phi$.

For the case $\alpha \neq \frac{1}{2}$, let $h:[0,1] \to [0,1]$ be a homeomorphism such that $h(0) = 0$ and $h(\alpha) = \frac{1}{2}$. By applying the preceding arguments for $\tilde P:=h \circ P \circ h^{-1}$, we obtain a flow $\tilde \Phi$ on $\mathbb{R}^2$ with $E_{D^2}(e^{2\pi i t}) = e^{2 \pi i \tilde P(t)}.$ If we define $H(r e^{i \theta}) := r e^{2 \pi i h(\theta / 2 \pi)}$ in polar coordinates, the map $H$ is a homeomorphism. If $\Phi$ is the flow conjugate with $\tilde\Phi$ via $H^{-1},$ it can be verified that it is the desired flow.
\end{proof}

So far, we have considered globally defined maps. For general partial maps, a promising approach will be to construct a flow with prescribed type sequences by pasting the flows with known types. However, it requires a consideration of the behavior, and therefore, it needs to be clarified whether it is always feasible.
%
\section{Application to the study of hybrid systems}

As an application of the results obtained earlier, we now consider a class of hybrid systems and consider their relationship with the flows.

A hybrid system consists of flows and maps defined locally, and the notion of partial maps is useful in describing them. First we define the notion of local flow as follows. 
\begin{definition}[local flow]\label{def_loc}
Let $X$ be a topological space.
A partial map $\Phi:\mathbb{R}\times X \to X$ with the open domain is a \emph{local flow} if it satisfies the following conditions.
\begin{enumerate}
	\item For each $x\in X$, there exist $-\infty\leq \alpha_x < 0$ and $0<\beta_x \leq \infty$ such that $(t,x) \in \mathrm{dom}\, \Phi$ if and only if $t\in (\alpha_x,\beta_x)$.
	\item $\Phi(0,x) = x$ for all $x \in X.$
	\item If $(s,x) \in \mathrm{dom}\, \Phi$ and either $(t+s,x) \in \mathrm{dom}\, \Phi$ or $\left(t, \Phi(s, x) \right) \in \mathrm{dom}\, \Phi$, $\Phi(t+s,x)=\Phi\left(t, \Phi(s, x) \right)$.
\end{enumerate}
For a local flow $\Phi$, we define the \emph{orbit} of $x\in X$ by
\[
	\mathcal{O}(x) := \{\Phi(t,x) \mid (t, x) \in \mathrm{dom}\, \Phi\}.
\]
\end{definition}
\begin{remark}
Our notion of local flow differs from the one appearing in literature in that we do not require the domain of each orbit to be well-behaved \cite{sell1971topological}. If the non-extendability condition is imposed, this definition essentially coincides with a \emph{local dynamical system} \cite{ura1969isomorphism, mccann1971}.  
\end{remark}

\begin{lemma}\label{lem_disj}
The orbits of a local flow are disjoint.
\end{lemma}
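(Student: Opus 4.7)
The plan is to prove the equivalent statement that if two orbits share a common point then they coincide. So I would begin by assuming $z \in \mathcal{O}(x) \cap \mathcal{O}(y)$, and writing $z = \Phi(s,x) = \Phi(t,y)$ for some $s \in (\alpha_x, \beta_x)$ and $t \in (\alpha_y, \beta_y)$. The goal is then to show that $y = \Phi(s-t, x)$, from which a symmetric bootstrapping argument with the cocycle law will identify every point of $\mathcal{O}(x)$ with a point of $\mathcal{O}(y)$ and vice versa.

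The key preliminary step is a local inverse lemma: if $(s, x) \in \mathrm{dom}\,\Phi$, then $(-s, \Phi(s,x)) \in \mathrm{dom}\,\Phi$ and $\Phi(-s, \Phi(s,x)) = x$. To see this, one invokes condition (3) of Definition \ref{def_loc} with the parameter choice $t := -s$. Both $(s,x)$ and $(t+s, x) = (0, x)$ belong to $\mathrm{dom}\,\Phi$ (the latter by conditions (1) and (2)), so the hypothesis is satisfied, and the conclusion $\Phi(0, x) = \Phi(-s, \Phi(s,x))$ forces the right-hand side to be defined and to equal $x$.

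Applying this lemma, from $z = \Phi(t, y)$ I get $y = \Phi(-t, z) = \Phi(-t, \Phi(s, x))$, with $(-t, \Phi(s,x)) \in \mathrm{dom}\,\Phi$. I can then invoke condition (3) again, now using the second clause of the ``either-or'' hypothesis: since $(s, x)$ and $(-t, \Phi(s,x))$ are both in $\mathrm{dom}\,\Phi$, the conclusion $\Phi(s-t, x) = \Phi(-t, \Phi(s,x)) = y$ follows, which in particular guarantees that $(s-t, x) \in \mathrm{dom}\,\Phi$. Hence $y \in \mathcal{O}(x)$. Now for an arbitrary $w = \Phi(u, x) \in \mathcal{O}(x)$, the local inverse lemma applied to $(s-t, x)$ yields $x = \Phi(t-s, y)$ with $(t-s, y) \in \mathrm{dom}\,\Phi$. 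Condition (3) applied with $(t-s, y)$ in the role of $(s, x)$ and $u$ in the role of $t$ then gives $\Phi(u + t - s, y) = \Phi(u, \Phi(t-s, y)) = w$, so $w \in \mathcal{O}(y)$. Thus $\mathcal{O}(x) \subseteq \mathcal{O}(y)$, and interchanging the roles of $x$ and $y$ yields equality.

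I do not expect any serious obstacle here: the content of the lemma is exactly the standard cocycle argument. The only mild subtlety is that condition (3) is phrased conditionally on the relevant expressions lying in $\mathrm{dom}\,\Phi$, so at each step one must explicitly verify that the pair on which one wishes to evaluate $\Phi$ is in the domain. The base case making the whole inductive flow of compositions possible is condition (1), which guarantees $0 \in (\alpha_x, \beta_x)$ and thus allows $-s$ to be composed with $s$ to recover the identity.
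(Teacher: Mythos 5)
Your proof is correct and follows essentially the same route as the paper's: both reduce to the standard cocycle argument, obtaining $y=\Phi(s-t,x)$ from the shared point and then transporting an arbitrary point of one orbit into the other. The only difference is that you isolate the local inverse step and explicitly verify domain membership at each application of condition (3), which the paper leaves implicit; this is a matter of care, not of method.
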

\begin{proof}
Let $z \in \mathcal{O}(x) \cap \mathcal{O}(y)$. Then, there are $t, s \in \mathbb{R}$, such that $z = \Phi(t,x) = \Phi(s,y)$. Thus, we have
\[
	x = \Phi\left(-t, \Phi(t,x)\right) = \Phi\left(-t,  \Phi(s,y)\right).
\]
Therefore, $x =  \Phi(s-t,y) \in \mathcal{O}(y)$, which implies $\mathcal{O}(x) \subset \mathcal{O}(y)$, Similarly, we have $y =  \Phi(t-s,x)$, and consequently $\mathcal{O}(y) \subset \mathcal{O}(x)$, Thus, we obtain $\mathcal{O}(x) = \mathcal{O}(y)$.
\end{proof}
\begin{remark}
By Lemma \ref{lem_disj}, the result of Theorem \ref{thm_mono} also holds for local flows defined in a neighborhood of the open set $A$ where the first-out map is considered. This possibility of local consideration is one of the advantages of a first-exit map over a first-in map.
\end{remark}
Here, we consider the next class of hybrid systems, which is an adaptation of the definition appearing in \cite{bernardo2008piecewise}.
\begin{definition}\label{def_imp}
An \emph{impacting system} is a system defined by a triple of a local flow $\Phi$ defined in the neighborhood of $H^+ :=\{(x,y) \mid y > 0\}\subset \mathbb{R}^2$, a partial map $P: \mathbb{R} \to \mathbb{R}$ such that
\[
	\mathrm{Im}\, E_{H^+} \subset \mathrm{dom}\, P \times \{0\},
\]
and a local flow $\Phi_s:\mathbb{R}\times R_s \to R_s$, where \[R_s := \{(x, 0) \mid P(x) = x \text{ and } E_{H^{+}} (x,0) = (x, 0)\}.\]

We denote an impacting system as $(P, \Phi, \Phi_s)$.

Two impacting systems $(P, \Phi, \Phi_s)$ and $(Q, \Psi, \Psi_s)$ are \emph{topologically conjugate} if there exists a homeomorphism $H: \bar H^{+} \to \bar H^{+} $ such that
\[
	\begin{aligned}
		&H( P(x), 0) = (Q, 0)\left(H(x, 0)\right) \\
		&\Psi\left(t, H(x,y)\right) = H\left(\Phi(t,x,y)\right)\\
		&\Psi_s\left(t, H(x,0)\right) = H\left(\Phi_s(t,x,0)\right)
	\end{aligned}
\]
whenever these expressions are defined for $t$, $x$, and $y$.  Here we define $(Q, 0)(x,y):= \qty(Q(x), 0)$.
\end{definition}
 \begin{remark}
 The local flow $\Phi_s$ describes the sliding mode of the system.
 \end{remark}
 
The definition of orbits of an impacting system requires an additional notion. Let $S$ and $S' \subset S$ be well-ordered sets such that $\mathrm{succ}\,(S') \subset S$, where $\mathrm{succ}\,$ is the successor function. Then, a \emph{hybrid time domain} is a set $\mathcal{T}$ of the form
\[
	\mathcal{T} := \bigcup_{n \in S'} \{n\}\times[t(n), t\left(\mathrm{succ}\,(n)\right)) \subset  S \times \mathbb{R}_{\geq 0},
\] 
where $t : S' \to  \mathbb{R}_{\geq 0}\cup \{\infty\}$ is an \emph{event time sequence} defined to be an order-preserving function, such that $t(\min S') =0$ and $t\qty(\mathrm{succ}\,(n)) = \infty$ implies $n$ is the maximal element of $S'$. In addition, we assume $[t(n), t\qty(\mathrm{succ}\,(n)) = \{t(n)\}$ if $t(n) =  t\qty(\mathrm{succ}\,(n))$. This definition of the hybrid time domain is a modification of the formulation in \cite{collins2006generalised}.

A \emph{forward trajectory} of an impacting system $(P, \Phi, \Phi_s)$ from $(x_0, y_0) \in \mathbb{R}^2$ is a map $\gamma$ from a hybrid time domain $\mathcal{T}$ to $\mathbb{R}^2$ such that
\begin{enumerate}
	\item $\gamma(\min S' , 0 ) = (x_0, y_0).$
	\item For each $s \in S',$ the dynamics on $[t(s), t\qty(\mathrm{succ}\, s))$ is described either by $\Phi$ or $\Phi_s$, i.e., if $\gamma\qty(s, t(s))\in R_s$, 
	\[
		\gamma(s, \tau) = \Phi_s\qty(\tau - t(s), \gamma\qty(s, t(s)))
	\]
	for all $\tau \in [t(s), t\qty(\mathrm{succ}\, s))$, and if $\gamma\qty(s, t(s))\not\in R_s,$ 
	\[
		\gamma(s, \tau) = \Phi\qty(\tau - t(s), \gamma\qty(s, t(s)))
	\]
	for all $\tau \in [t(s), t\qty(\mathrm{succ}\, s))$.
	\item Event times of $\gamma$ are jumping times, i.e.,
		\[
			\gamma\qty(\mathrm{succ}\,s, t\qty(\mathrm{succ}\, s)) = (P, 0) \left(\lim_{\tau \to t(\mathrm{succ}\,s)}\gamma(s, \tau)\right) 
		\] 
\end{enumerate}
This definition of forward trajectory is an adaptation of the concept of the \emph{solution} in \cite{goebel2012hybrid}.
The assumption that the index set $S$ is well-ordered is essential for a rule-based description of a model, as it enables specifying the dynamics in terms of updating rules. 
\begin{remark}
In our definition of forward trajectories, uniqueness is not guaranteed. For example, it is possible for trajectories starting from a point on $\mathbb{R}\times\{0\}$ to move either by a flow or a map.
\end{remark}
\begin{remark}
There is no simultaneous multiple jumping in an impact system. If such an event occurs at $(x,0)$, it follows that $P(x) = x$ and $E_{H^{+}} (x,0) = (x, 0)$. Therefore $(x, 0)\in R_s$ and its orbit remains in $R_s$ for some time because the domain of $\Phi_s$ is required to be open.
\end{remark}
We introduce the following notion to consider the connection between hybrid systems and local flows.
\begin{definition}
A pair of local flows $\Phi_1$ and $\Phi_2$ \emph{induces an impacting system} if $\Phi_1$ and $\Phi_2$ are defined in the neighborhoods of $H^-$ and $H^+$ respectively, and $(P, \Phi_2, \Phi_s)$ is an impacting system. Here, $E_{H^{+}} = \qty(P(x), 0)$ is the first-out map of $\Phi_1$ and $\Phi_s$ is the restriction of $\Phi_2$ to $R_s$.
\end{definition}
Impacting systems of physical origin are often induced by local flows. By describing the transient dynamics of the switching using a flow, we may obtain a representation of the partial map of reset.

\begin{example}
 A typical example of an impacting system is the \emph{impact oscillator}, which is a harmonic oscillator with the influence of floor considered. In our definition, it can be formulated as an impacting system $\qty(P, \Phi, \Phi_s)$ defined by $P(x) = \max\qty(-\mu x,x)$ for some $\mu>0$, 
\[
	\Phi(t, x,y) = \qty(\cos(\pi t) x -\sin(\pi t)y, \sin(\pi t) x + \cos(\pi t) y) 
\]
and $\Phi_s(t, x, y) = (x,y)$. By the construction in Theorem \ref{thm_conti}, we can show that it is induced by a combination of local flows.
\end{example}
However, it is not necessarily true that every flow induces an impact system.
\begin{example}\label{ex_comsin}
Let $h:\mathbb{R} \to \mathbb{R}$ be defined by
\[
	h(x) := \begin{cases}
				x\sin(1/x) & x \neq 0\\
				0 & x= 0.
			\end{cases}
\]

Since $h$ is continuous, $H:\mathbb{R}^2 \to \mathbb{R}^2$ defined by 
\[
	H(x,y) := (x, y+h(x))
\]
is a homeomorphism. Let $\Phi$ be the flow induced from $H$ and $\Psi(t, x, y) := (x +t, y)$ by Lemma \ref{lem_mkflow}. Then, it does not induce an impact system. This is because the timing of the switching is not well-ordered for the Zeno trajectory of the origin, and consequently, a map cannot specify the ``next" point of the origin. 
\end{example}

\begin{remark}
The composition of the first-out map and the reset map can be regarded as a Poincar\'e map. In particular, it is easily observed that the fixed points of the composite map correspond to the equilibrium or periodic points of the impacting system.
\end{remark}

\begin{theorem}
If flows $\Phi$ and $\Psi$ are topologically conjugate and induce impacting systems, induced systems are topologically conjugate.
\end{theorem}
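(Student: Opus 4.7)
The plan is to use the given topological conjugacy between $\Phi$ and $\Psi$ directly to build the conjugacy of the induced impacting systems. Write $(P, \Phi_2, \Phi_s)$ and $(Q, \Psi_2, \Psi_s)$ for the systems induced by the pairs $(\Phi_1, \Phi_2)$ and $(\Psi_1, \Psi_2)$, where $P$ comes from $E^{\Phi_1}_{H^-}$ and $Q$ from $E^{\Psi_1}_{H^-}$.

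First I would argue that the topological conjugacy $H$ between $\Phi$ and $\Psi$ can be assumed to carry $\bar H^+$ to $\bar H^+$ (and hence $\bar H^-$ to $\bar H^-$ and $\mathbb{R}\times\{0\}$ to itself); without such compatibility, the phrase ``induces an impacting system'' is not even respected by $H$ and the claim would be ill-posed. Under this assumption, the restriction $\tilde H := H\vert_{\bar H^+}$ is a homeomorphism of $\bar H^+$ to itself, and the second clause of Definition \ref{def_imp} is immediate from the intertwining identity $\Psi(t, H(p)) = H(\Phi(t, p))$.

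Next, to verify the first clause (conjugacy of reset maps), I would invoke Lemma \ref{lem_conj} with $A = H^-$: since $(P(x), 0) = E^{\Phi_1}_{H^-}(x, 0)$ and $(Q(x), 0) = E^{\Psi_1}_{H^-}(x, 0)$, the lemma gives
\[
H(P(x), 0) \;=\; H\bigl(E^{\Phi_1}_{H^-}(x, 0)\bigr) \;=\; E^{\Psi_1}_{H^-}\bigl(H(x, 0)\bigr) \;=\; (Q, 0)\bigl(H(x, 0)\bigr),
\]
which is exactly the first required identity. For the third clause, observe that the sliding region $R_s = \{(x,0) : P(x) = x \text{ and } E^{\Phi_2}_{H^+}(x, 0) = (x, 0)\}$ is cut out by two topologically invariant conditions. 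The reset-map conjugacy just established shows that $H$ carries fixed points of $P$ to fixed points of $Q$, and a second application of Lemma \ref{lem_conj} to $E^{\Phi_2}_{H^+}$ shows that $H$ carries fixed points of $E^{\Phi_2}_{H^+}$ to fixed points of $E^{\Psi_2}_{H^+}$. Hence $H(R_s) = R'_s$, and because $\Phi_s$ and $\Psi_s$ are by definition the restrictions of $\Phi_2$ and $\Psi_2$ to these sliding regions, the flow intertwining restricts to the third clause of Definition \ref{def_imp}.

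The main obstacle is conceptual rather than technical: one must decide how much the hypothesis ``$\Phi$ and $\Psi$ are topologically conjugate'' implicitly demands that the conjugating homeomorphism respect the half-plane partition defining the impacting systems, and in particular that $H(\bar H^+) = \bar H^+$. Once this compatibility is granted (and it is essentially forced if one wants the induced systems to correspond at all), the remainder of the proof is a direct bookkeeping exercise: Lemma \ref{lem_conj}, applied once for $E_{H^-}$ and once for $E_{H^+}$, produces every clause of the required conjugacy, and no further analytic input is needed.
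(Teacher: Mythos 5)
Your proposal is correct and takes essentially the same route as the paper, whose entire proof is the single sentence ``This follows immediately from Lemma \ref{lem_conj}.'' Your expansion—applying Lemma \ref{lem_conj} once for $E_{H^-}$ to conjugate the reset maps and once for $E_{H^+}$ to identify the sliding regions, after noting that the conjugating homeomorphism must be taken to preserve the half-plane decomposition (an assumption the paper leaves implicit)—is exactly the intended argument, just spelled out.
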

\begin{proof}
This follows immediately from Lemma \ref{lem_conj}.
\end{proof}

The importance of impact oscillators can be understood by the following theorem. According to this result, the resetting map can be taken to be that of the impact oscillator for a rather broad class of impacting systems. This result also implies that the map part of an impact system takes a rather limited form if they are derived from flows and sufficiently well-behaved.

\begin{theorem}[Main Theorem D]
Let $(P, \Phi, \Phi_s)$ be an impacting system induced by local flows. If $P$ is total, continuous, and not identity, then $(P, \Phi, \Phi_s)$ is topologically conjugate with another impacting system $(Q, \Psi, \Psi_s)$, where $Q(x) = -x$ if $x \leq 0$.
\end{theorem}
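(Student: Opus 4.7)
The plan is to use the ``induced by local flows'' hypothesis to realize $P$ as a first-out map, extract a rigid structural form for $P$ from the monotonicity theorem, build a homeomorphism $h\colon\mathbb{R}\to\mathbb{R}$ that conjugates $P$ to the stated $Q$, extend $h$ to $H(x,y):=(h(x),y)$ on $\bar H^+$, and push the flow data $\Phi$ and $\Phi_s$ forward along $H$. The first step is to analyze the structure of $P$: since $P$ arises as the first-out map of a local flow near $H^-$, Theorem \ref{thm_two_one} gives that $P$ is at most two-to-one, and Theorem \ref{thm_mono} (together with the remark after Lemma \ref{lem_disj} that extends it to local flows) gives that $P$ is locally strictly decreasing at every point where $P(x)\neq x$. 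The parametric version of Theorem \ref{thm_mono} is stated for Jordan curves, so I would either compactify $\mathbb{R}^2$ to the Riemann sphere as in Example \ref{ex_moe} and apply it to the resulting circle, or rerun the disjoint-orbits argument of Lemma \ref{lem_mono} directly on the half-plane. Continuity together with local strict decrease rules out bounded components of $U:=\{x:P(x)\neq x\}$, because on a bounded component $(a,b)$ the map $P$ would be strictly decreasing on $[a,b]$ with $P(a)=a<b=P(b)$, which is absurd. Hence $\mathrm{Fix}(P)$ has one of the shapes $[a,\infty)$, $(-\infty,b]$, or $[a,b]$, and on each unbounded component of $U$ the map $P$ is a strictly decreasing continuous bijection onto its image.

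Once the structure of $P$ is in hand, the next step is to build the boundary conjugacy. Up to the topological conjugacy $(x,y)\mapsto(-x,y)$ of impacting systems and a translation I may assume $\mathrm{Fix}(P)\supseteq[0,\infty)$ and $P\colon(-\infty,0]\to[0,\infty)$ is a decreasing bijection with $P(0)=0$. Define
\[
h(x):=\begin{cases} x, & x\ge 0,\\ -P(x), & x\le 0,\end{cases}
\]
which agrees at $0$ and is a strictly increasing homeomorphism of $\mathbb{R}$. A direct check gives $h(P(x))=Q(h(x))$ for all $x$: for $x\ge 0$ both sides equal $x$, and for $x\le 0$ one has $h(P(x))=P(x)$ (since $P(x)\ge 0$) and $Q(h(x))=-h(x)=P(x)$. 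In the remaining case $\mathrm{Fix}(P)=[a,b]$ with $a<b$, two-to-oneness forces the image intervals of the two decreasing branches to be disjoint, and a similar piecewise recipe produces an $h$ conjugating $P$ to a map whose restriction to $(-\infty,0]$ is still $-x$.

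Finally I would extend by $H(x,y):=(h(x),y)$, which is a homeomorphism of $\mathbb{R}^2$ preserving $\bar H^+$. By Lemma \ref{lem_mkflow}, the pushforwards $\Psi(t,\cdot):=H\circ\Phi(t,\cdot)\circ H^{-1}$ and $\Psi_s:=H\circ\Phi_s\circ H^{-1}$ are a local flow on a neighborhood of $H^+$ and a local flow on $R'_s:=H(R_s)$, respectively. Using the local-flow analogue of Lemma \ref{lem_conj}, first-out maps and the fixed-point conditions defining $R_s$ transform equivariantly under $H$, so $R'_s=\{(u,0):Q(u)=u\text{ and }E^{\Psi}_{H^+}(u,0)=(u,0)\}$, and together with the boundary identity $H(P(x),0)=(Q(h(x)),0)$ this shows $(Q,\Psi,\Psi_s)$ is an impacting system topologically conjugate to $(P,\Phi,\Phi_s)$ via $H$. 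The main obstacle I anticipate is the first step: importing Theorem \ref{thm_mono} from the compact Jordan-curve setting to the unbounded axis and cleanly enumerating the possible shapes of $\mathrm{Fix}(P)$ demand the most care; once the structural form of $P$ is fixed, the construction of $h$ and the pushforward are essentially bookkeeping.
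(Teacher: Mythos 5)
Your overall route is the same as the paper's: realize $P$ as a first-out map, use the monotonicity machinery (Theorem \ref{thm_mono}, two-to-oneness) to pin down the global shape of $P$, build a boundary homeomorphism $h$, and push everything forward via $H(x,y)=(h(x),y)$. Your $h$ in the half-line case (using $-P$ on the negative side rather than the paper's $-P^{-1}$ on the positive side) is a correct variant, and the pushforward bookkeeping at the end is fine.

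The genuine gap is your case (iii), $\mathrm{Fix}(P)=[a,b]$ with $a<b$, i.e.\ two unbounded decreasing branches. You wave at it with ``a similar piecewise recipe produces an $h$,'' but no such $h$ can exist. Any conjugating homeomorphism must carry $\mathrm{Fix}(P)$ onto $\mathrm{Fix}(Q)$; since $0$ is the only fixed point of $Q$ in $(-\infty,0]$, this forces $h(a)=0$ and hence $Q\bigl((-\infty,0]\bigr)=h\bigl(P((-\infty,a])\bigr)$. Now $Q\bigl((-\infty,0]\bigr)=[0,\infty)$ is unbounded, whereas in case (iii) your own two-to-oneness observation (each fixed $y\in(a,b)$ already counts itself as a preimage, so at most one branch may hit it) forces the branch images $[a,c)$ and $(d,b]$ to be disjoint, giving $a<c\le d<b$; in particular $P\bigl((-\infty,a]\bigr)=[a,c)$ is bounded, and a homeomorphism of $\mathbb{R}$ sends bounded sets to bounded sets. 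So case (iii) is not to be handled but to be excluded, and your purely local analysis on the line cannot exclude it. This exclusion is exactly what the paper's detour through the compactification buys: after transplanting to the disc, $\infty$ is an equilibrium and hence a fixed point of the extended first-out map sitting at the endpoint of the parametrization, and Corollary \ref{cor_con} then forces the non-identity locus to be a single interval adjacent to that endpoint, i.e.\ a single half-line back on $\mathbb{R}$. Without this global input from the point at infinity, your enumeration of the possible shapes of $\mathrm{Fix}(P)$ leaves a case in which the asserted conjugacy provably fails.
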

\begin{proof}
Let the map $(P, 0) $ be the first-out map of a local flow $\tilde\Phi$. By an argument similar to that in Corollary \ref{cor_circ}, we can ascribe $\tilde\Phi$ to another local flow defined in a neighborhood of the unit disc so that infinity is mapped to 1, which is an equilibrium. Then, we apply Corollary \ref{cor_con} and Remark \ref{rem_uni} to conclude that $P$ is a unimodal map. By flipping the x-axis, we may assume that $P$ decreases monotonically on $(-\infty, \alpha]$ for some $\alpha \in \mathbb{R}$ and identity on $[\alpha, \infty).$ By shifting the plane by $\alpha$, we may assume that $\alpha =0$ without loss of generality.

Let us define a homeomorphism $h: \mathbb{R} \to \mathbb{R}$ by 
\[
	h(x) = \begin{cases}
				-P^{-1}(x) & (x \geq 0)\\
				x & (x < 0),
			\end{cases}
\]
where $P^{-1}$ denotes the negative branch. Then, $H(x,y) := (h(x), y)$ gives the desired conjugation.
\end{proof}
When an impacting system is induced by flows, Lemma \ref{lem_mono_F} restricts the possible behavior. 
\begin{theorem}
Let $(P, \Phi, \Phi_s)$ be an impacting system induced by local flows, $P(x) = y$ and $E_{H^+}(y,0) = (z, 0)$. If $z$ lies between $x$ and $y$, then all intersections of the forward orbit of $(z,0)$ with $\mathbb{R}\times \{0\}$ lie between $x$ and $y$.
\end{theorem}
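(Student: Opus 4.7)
The plan is to reduce the statement to a Jordan-curve trapping argument combined with the monotonicity in Lemma~\ref{lem_mono_F}. Without loss of generality assume $x<y$, so $z\in[x,y]$. Let $\tilde\Phi$ be the local flow on a neighborhood of $H^-$ whose first-out map is $P$. I would first encode the data as a Jordan curve $\gamma=\alpha\cup\beta\cup\sigma$, where $\alpha\subset\overline{H^-}$ is the $\tilde\Phi$-orbit arc from $(x,0)$ to $(y,0)$ realizing $P(x)=y$, $\beta\subset\overline{H^+}$ is the $\Phi$-orbit arc from $(y,0)$ to $(z,0)$ realizing $E_{H^+}(y,0)=(z,0)$, and $\sigma=[x,z]\times\{0\}$. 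Its bounded complementary component $D$ satisfies $\overline D\cap(\mathbb R\times\{0\})=[x,y]$, so the problem reduces to showing the forward orbit of $(z,0)$ is contained in $\overline D$.

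Next I would extract the two trapping properties from Lemma~\ref{lem_mono_F} (viewing $\partial H^\pm$ as a Jordan curve on the Riemann sphere as in Example~\ref{ex_moe}): the relation $E_{H^+}(y,0)=(z,0)$ yields $E_{H^+}\bigl((z,y)\times\{0\}\bigr)\subset(z,y)\times\{0\}$, and $P(x)=y$ yields $P((x,y))\subset(x,y)$. Combined with the fact that no $\Phi$-arc of the forward orbit can cross $\beta$ and no jump arc can cross $\alpha$, these show that each $\Phi$-flow segment based in $(z,y)\times\{0\}$ stays in $\overline D\cap\overline{H^+}$ and returns to $[z,y]\times\{0\}$, and each jump segment based in $(x,y)\times\{0\}$ stays in $\overline D\cap\overline{H^-}$ and returns to $[x,y]\times\{0\}$. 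An induction on successive flow and jump segments then confirms that the trajectory never leaves $\overline D$.

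The hard part is the very first $\Phi$-arc, which emanates from the vertex $(z,0)$ of $\gamma$ and therefore sits outside the open interval on which the first trapping acts directly. I would control its return point $z_1$ to the x-axis by two further applications of Lemma~\ref{lem_mono_F}: if $z_1>y$, monotonicity at $(z,z_1)$ forces $E_{H^+}(y,0)\in(z,z_1)\times\{0\}$, contradicting $E_{H^+}(y,0)=(z,0)$; if $z_1<x$, the arc closed off by a segment of the x-axis together with $\alpha$ produces a configuration that violates the non-crossing of $\tilde\Phi$-orbits with the one realizing $P(x)=y$. Either way $z_1\in[x,y]$, and once this initial arc is handled the induction closes the argument.
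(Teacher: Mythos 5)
There is a genuine gap at the inductive step, and it sits exactly where the difficulty of the statement lives. Your two trapping properties are asymmetric: Lemma \ref{lem_mono_F} gives $P\bigl((x,y)\bigr)\subset(x,y)$ from $P(x)=y$, but from $E_{H^+}(y,0)=(z,0)$ it only gives $E_{H^+}\bigl((z,y)\times\{0\}\bigr)\subset(z,y)\times\{0\}$ --- it says nothing about $E_{H^+}$ on $(x,z)\times\{0\}$. A jump segment based in $(x,y)\times\{0\}$ is therefore only guaranteed to land in $(x,y)\times\{0\}$; if it lands in $(x,z)\times\{0\}$, the next flow segment starts at a point from which the orbit immediately enters $H^+\setminus\overline{D^+}$ (note $\overline D\cap H^+\subset\overline{D^+}$, and $\overline{D^+}$ meets the axis only in $[z,y]\times\{0\}$, since $\sigma=[x,z]\times\{0\}$ bounds only the lower lobe). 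That arc leaves $\overline D$ at once, and neither the non-crossing with $\beta$ nor Lemma \ref{lem_mono_F} controls where it next meets the axis. So the induction does not close as written: you would need a separate argument that every $E_{H^+}$-arc based in $(x,z)$ returns to $[x,y]$, and none is supplied.

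A second, smaller defect is the case $z_1<x$ for the initial arc. That arc is a $\Phi$-orbit arc contained in $\overline{H^+}$, whereas $\alpha$ is a $\tilde\Phi$-orbit arc contained in $\overline{H^-}$; they are orbits of \emph{different} flows, they can meet only on the axis, and there the initial arc has only its endpoints $(z,0)$ and $(z_1,0)$, neither of which lies on $\alpha$ when $z_1<x$. Hence there is no forbidden crossing, and this case is not actually excluded. (Your exclusion of $z_1>y$ via Lemma \ref{lem_mono_F} applied on $(z,z_1)$ is correct and is a nice observation.) For comparison, the paper dispatches the theorem in one sentence as an immediate consequence of Lemma \ref{lem_mono_F}, with no Jordan curve $\gamma$ and no case analysis on the first arc; your more explicit reconstruction is valuable precisely because it surfaces the alternation between the two trapping intervals $(x,y)$ and $(z,y)$, but that alternation is the step your argument still fails to justify.
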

\begin{proof}
This is an immediate consequence of Lemma \ref{lem_mono_F}.
\end{proof}

\section{Concluding Remarks}
We have defined the concept of the first-out and first-in maps and considered their basic properties. Although the discussion here will serve as a proof of concept, it is clear that there are issues to be considered. 

In particular, the global restriction of type sequences in two-dimensional flows poses interesting questions. It is important to identify whether there is any prohibited combination of types other than BC because it gives us information on the dynamics of planar flow in general. If this exists, such restriction will be global, and a detailed analysis will be required.

\section*{Acknowledgements}
This study was supported by a Grant-in-Aid for JSPS Fellows (20J01101). 
\bibliographystyle{plain}
\bibliography{reduction_flow}

\begin{thebibliography}{10}

\bibitem{abd1980compact}
A.~M. Abd-Allah and R.~Brown.
\newblock A compact-open topology on partial maps with open domain.
\newblock {\em Journal of the London Mathematical Society}, 2(3):480--486,
  1980.

\bibitem{basener2004every}
William Basener.
\newblock Every nonsingular c1 flow on a closed manifold of dimension greater
  than two has a global transverse disk.
\newblock {\em Topology and its Applications}, 135(1-3):131--148, 2004.

\bibitem{bernardo2008piecewise}
Mario Bernardo, Chris Budd, Alan~Richard Champneys, and Piotr Kowalczyk.
\newblock {\em Piecewise-smooth dynamical systems: theory and applications},
  volume 163.
\newblock Springer Science \& Business Media, 2008.

\bibitem{brin2002introduction}
Michael Brin and Garrett Stuck.
\newblock {\em Introduction to dynamical systems}.
\newblock Cambridge university press, 2002.

\bibitem{carmona2021integral}
Victoriano Carmona and Fernando Fern{\'a}ndez-S{\'a}nchez.
\newblock Integral characterization for poincar{\'e} half-maps in planar linear
  systems.
\newblock {\em Journal of Differential Equations}, 305:319--346, 2021.

\bibitem{collins2006generalised}
Pieter Collins.
\newblock Generalised hybrid trajectory spaces.
\newblock In {\em Proc. of 17th Int. Symp. on Math. Theory of Networks and
  Systems (MTNS’06)}, pages 2101--2109, 2006.

\bibitem{goebel2012hybrid}
Rafal Goebel, Ricardo~G Sanfelice, and Andrew~R Teel.
\newblock Hybrid dynamical systems.
\newblock In {\em Hybrid dynamical systems}. Princeton University Press, 2012.

\bibitem{irwin2001smooth}
Michael~Charles Irwin.
\newblock {\em Smooth dynamical systems}, volume~17.
\newblock World Scientific, 2001.

\bibitem{lee2010introduction}
John Lee.
\newblock {\em Introduction to topological manifolds}, volume 202.
\newblock Springer Science \& Business Media, 2010.

\bibitem{mccann1971}
Roger~C McCann.
\newblock Local sections of perturbed local dynamical systems.
\newblock {\em Journal of Differential Equations}, 10(2):336--344, 1971.

\bibitem{novaes2021lyapunov}
Douglas~D Novaes and Leandro~A Silva.
\newblock Lyapunov coefficients for monodromic tangential singularities in
  filippov vector fields.
\newblock {\em Journal of Differential Equations}, 300:565--596, 2021.

\bibitem{robinson1998dynamical}
C.~Robinson.
\newblock {\em Dynamical Systems: Stability, Symbolic Dynamics, and Chaos}.
\newblock Studies in Advanced Mathematics. CRC Press, 1998.

\bibitem{sell1971topological}
George~R Sell.
\newblock {\em Topological dynamics and ordinary differential equations},
  volume~33.
\newblock Van Nostrand-Reinhold, 1971.

\bibitem{suda2022}
Tomoharu Suda.
\newblock A categorical view of poincar\'e maps and suspension flows.
\newblock {\em Dynamical Systems}, 37(1):159--179, 2022.

\bibitem{uehleke1983complicated}
Bernhard Uehleke and Otto~E R{\"o}ssler.
\newblock Complicated poincar{\'e} half-maps in a linear system.
\newblock {\em Zeitschrift f{\"u}r Naturforschung A}, 38(10):1107--1113, 1983.

\bibitem{ura1969isomorphism}
Taro Ura.
\newblock Isomorphism and local characterization of local dynamical systems.
\newblock {\em Funkcial. Ekvac}, 12:99--122, 1969.

\bibitem{yang2000remark}
Xiao-Song Yang.
\newblock A remark on global poincar{\'e} section and suspension manifold.
\newblock {\em Chaos, Solitons \& Fractals}, 11(13):2157--2159, 2000.

\end{thebibliography}
\end{document}